\tikzset{cross/.style={cross out, draw=blue, minimum size=1*(#1-\pgflinewidth), inner sep=0pt, outer sep=0pt},
cross/.default={6pt}}
\newcommand{\dsp}{\displaystyle}
\newcommand{\eps}{\varepsilon}
\newcommand{\om}{\omega}
\newcommand{\Om}{\Omega}
\newcommand{\R}{\mathbb{R}}
\definecolor{Gray}{gray}{0.90}
\newcommand {\be}{\begin{equation}}
\newcommand {\ee}{\end{equation}}
\newcommand {\la} {\lambda}
\newtheorem{theorem}{Theorem}[section]
\newtheorem{lemma}{Lemma}[section]
\newtheorem{remark}{Remark}[section]
\newtheorem{corollary}{Corollary}[section]
\newtheorem{proposition}{Proposition}[section]
\begin{document}

~\vspace{-0.3cm}
\begin{center}
{\sc \bf\LARGE On quasi-reversibility solutions to the Cauchy problem \\[6pt] for the Laplace equation: regularity and error estimates}
\end{center}

\begin{center}
\textsc{Laurent Bourgeois}$^1$, \textsc{Lucas Chesnel}$^{2}$\\[16pt]
\begin{minipage}{0.95\textwidth}
{\small
$^1$ Laboratoire Poems, CNRS/ENSTA/INRIA, Ensta ParisTech, Universit\'e Paris-Saclay, 828, Boulevard des Mar\'echaux, 91762 Palaiseau, France; \\
$^2$ INRIA/Centre de math\'ematiques appliqu\'ees, \'Ecole Polytechnique, Universit\'e Paris-Saclay, Universit\'e Paris-Saclay, Route de Saclay, 91128 Palaiseau, France.\\[10pt]
E-mails: \textit{Laurent.Bourgeois@ensta-paristech.fr},\, \textit{Lucas.Chesnel@inria.fr}\\[-14pt]
\begin{center}
(\today)
\end{center}
}
\end{minipage}
\end{center}
\vspace{0.4cm}

\noindent\textbf{Abstract.} We are interested in the classical ill-posed Cauchy problem for the Laplace equation. One method to approximate the solution associated with compatible data consists in considering a family of regularized well-posed problems depending on a small parameter $\eps>0$. In this context, in order to prove convergence of finite elements methods, it is necessary to get regularity results of the solutions to these regularized problems which hold uniformly in $\eps$. In the present work, we obtain these results in smooth domains and in 2D polygonal geometries. In presence of corners, due the particular structure of the regularized problems, classical techniques \textit{\`a la} Grisvard do not work and instead, we apply the Kondratiev approach. We describe the procedure in detail to keep track of the dependence in $\eps$ in all the estimates. The main originality of this study lies in the fact that the limit problem is ill-posed in any framework.\\
\newline
\noindent\textbf{Key words.} Cauchy problem, quasi-reversibility, regularity, finite element methods, corners. 

\section{Introduction and setting of the problem}
\label{introduction}

\begin{figure}[!ht]
\centering
\begin{tikzpicture}[scale=3]
\draw [draw=black,fill=gray!20] plot [smooth cycle, tension=1] coordinates {(-0.6,0.9) (0,0.5) (0.7,1) (0.5,1.5) (-0.2,1.4)};
\node at (0,0.9){ $\Om$};
\node at (-0.6,0.6){ $\Gamma$};
\node at (-0.3,1.45){ $\tilde{\Gamma}$};
\path [clip] (-1,0.4) rectangle (1,1);
\draw [draw=blue,line width=0.7mm] plot [smooth cycle, tension=1] coordinates {(-0.6,0.9) (0,0.5) (0.7,1) (0.5,1.5) (-0.2,1.4)};
\end{tikzpicture}\qquad\quad\begin{tikzpicture}[scale=3]
\draw [draw=blue,line width=0.7mm,fill=gray!20] plot [smooth cycle, tension=1] coordinates {(-0.7,1.1) (0,0.3) (0.7,1) (0.5,1.5) (-0.2,1.4)};
\node at (-0.4,0.9){ $\Om$};
\node at (-0.6,0.6){ $\Gamma$};
\begin{scope}[yshift=0.7cm]
\draw [draw=black,fill=white] plot [smooth cycle, tension=1] coordinates {(-0.1,0.5) (0.3,0.4) (0.1,0.1) (-0.2,0.1)};
\node at (-0.1,0.3){ $\tilde{\Gamma}$};
\end{scope}
\end{tikzpicture}
\caption{Examples of domains $\Om$. The thick blue lines represent the support of measurements.  \label{SchematicPicture}
} 
\label{figIntro}
\end{figure}
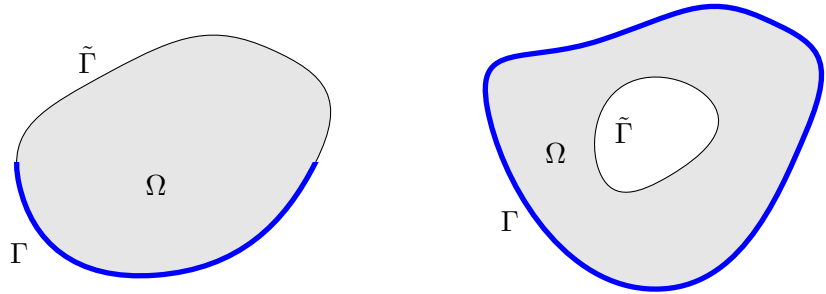

Let us consider a bounded Lipschitz domain $\Omega \subset \mathbb{R}^d$, $d>1$, the boundary $\partial \Omega$ of which is partitioned into two sets $\Gamma$ and $\tilde{\Gamma}$.
More precisely, $\Gamma$ and $\tilde{\Gamma}$ are non empty open sets for the topology induced on $\partial \Omega$ from the topology on $\mathbb{R}^d$, $\partial \Omega= \overline{\Gamma} \cup \overline{\tilde{\Gamma}}$ and $\Gamma \cap \tilde{\Gamma}=\emptyset$ (see Figure \ref{SchematicPicture}).
The Cauchy problem we are interested in consists, for some data $(g_0,g_1) \in H^{1/2}(\Gamma) \times H^{-1/2}(\Gamma)$, in finding $u \in H^1(\Omega)$ such that 
\be \left\{
\begin{array}{rccl}
 \Delta u  &=& 0&  \mbox{in }\Omega\\
 u&=&g_0  & \mbox{on }\Gamma \\
 \partial_\nu u&=&g_1 & \mbox{on }\Gamma,
\end{array}
\right.
\label{cauchy_strong}
\ee
where $\nu$ is the outward unit normal to $\partial\Omega$.
This kind of problem arises when some part $\tilde{\Gamma}$ of the boundary of a structure is not accessible, while the complementary part $\Gamma$ is the support of measurements which provide the Cauchy data $(g_0,g_1)$. It is important to note that in practice those measurements are contaminated by some noise. 
Due to Holmgren's theorem, the Cauchy problem (\ref{cauchy_strong}) has at most one solution. However it is ill-posed in the sense of Hadamard: existence may not hold for some data $(g_0,g_1)$, as for example shown in \cite{benbelgacem}.
A possibility to regularize problem (\ref{cauchy_strong}) is to use the quasi-reversibility method, which goes back to \cite{lattes_lions} and was revisited in \cite{klibanov_santosa}.
The original idea was to replace an ill-posed Boundary Value Problem such as (\ref{cauchy_strong}) by a family, depending on a small parameter $\eps$, 
of well-posed fourth-order BVPs.
Much later, the first author introduced the notion
of mixed formulation of quasi-reversibility for the Cauchy problem of the Laplace equation \cite{bourgeois}. This notion was extended to general abstract linear ill-posed problems in 
\cite{bourgeois_recoquillay}. The idea is to replace the ill-posed second-order BVP by a family, again depending on a small parameter $\eps$, of second-order systems of two coupled 
BVPs: the advantage is that the order of the regularized problem is the same as the original one, which is interesting when it comes to the numerical resolution. The price to pay is the introduction of a second unknown function $\la_\eps$ in addition to the principal unknown $u_\eps$.
Such mixed formulation of quasi-reversibility is the following: for $\eps >0$, find $(u_\eps,\lambda_\eps) \in V_{g_0} \times \tilde{V}_0$ such that for all $(v,\mu) \in V_0 \times \tilde{V}_0$,
\begin{equation} 
\left\{\begin{array}{rcl} 
\displaystyle \eps \int_\Omega \nabla u_\eps \cdot \nabla v\,dx + \int_\Omega \nabla v\cdot \nabla \lambda_\eps\,dx&=&0\\[8pt]
\displaystyle \int_\Omega \nabla u_\eps \cdot \nabla \mu\,dx - \int_\Omega \nabla \lambda_\eps\cdot \nabla \mu\,dx&=&\langle g_1,\mu \rangle_{H^{-1/2}(\Gamma),\tilde{H}^{1/2}(\Gamma)},
\end{array} \right. 
\label{Pr_cauchy} 
\end{equation}
where $V_{g_0}=\{u \in H^1(\Omega),\,\,u|_{\Gamma}=g_0\}$, $V_0=\{u \in H^1(\Omega),\,\,u|_{\Gamma}=0\}$ and
$\tilde{V}_0=\{\la \in H^1(\Omega),\,\,\la|_{\tilde{\Gamma}}=0\}$. In (\ref{Pr_cauchy}), the brackets stand for duality pairing between $H^{-1/2}(\Gamma)$ and $\tilde{H}^{1/2}(\Gamma)$. 
Here $\tilde{H}^{1/2}(\Gamma)$ is the subspace formed by the functions in $H^{1/2}(\Gamma)$ which, once extended by $0$ on $\partial \Omega$, remain in $H^{1/2}(\partial \Omega)$.
We observe that in view of Poincar\'e inequality, the standard norm of $H^1(\Omega)$ in the spaces $V_0$ and $\tilde{V}_0$ is equivalent to the semi-norm $\|\cdot\|$ defined by
$\|\cdot\|^2=\int_\Omega |\nabla \cdot|^2\,dx$.
Let us denote $(\cdot,\cdot)$ the corresponding scalar product.
We remark that the weak formulation (\ref{Pr_cauchy}) is equivalent to the strong problem
\be \left\{
\begin{array}{rcll}
 \Delta u_\eps  &= &0&  \mbox{in }\Omega\\
 \Delta \la_\eps  &= &0&  \mbox{in }\Omega\\
 u_\eps&=&g_0  & \mbox{on }\Gamma \\
 \partial_\nu u_\eps -\partial_\nu \la_\eps&=&g_1 & \mbox{on }\Gamma \\
 \la_\eps&=&0  & \mbox{on }\tilde{\Gamma} \\
 \eps\, \partial_\nu u_\eps + \partial_\nu \la_\eps&=&0 & \mbox{on }\tilde{\Gamma},
\end{array}
\right.
\label{Pr_cauchy_strong}
\ee
where we observe that the two unknowns $u_\eps$ and $\la_\eps$ are harmonic functions which are coupled by the boundary $\partial \Omega$.
We have the following theorem.
\begin{theorem}
\label{base}
For all $(g_0,g_1) \in H^{1/2}(\Gamma) \times H^{-1/2}(\Gamma)$, the problem (\ref{Pr_cauchy}) has a unique solution $(u_\eps,\la_\eps) \in V_{g_0} \times \tilde{V}_0$.
There exists a constant $C$ which depends only on the geometry such that
\[\forall \eps \in (0,1], \quad \sqrt{\eps}\|u_\eps\|_{H^1(\Omega)} + \|\la_\eps\|_{H^1(\Omega)} \leq C (\|g_0\|_{H^{1/2}(\Gamma)} +\|g_1\|_{H^{-1/2}(\Gamma)} ).\]
If in addition we assume that $(g_0,g_1)$ is such that problem (\ref{cauchy_strong}) has a (unique) solution $u$ (the data are said to be compatible), then there exists a constant $C$ which depends only on the geometry such that
\[\forall \eps >0,\quad \|u_\eps\|_{H^1(\Omega)} + \frac{\|\la_\eps\|_{H^1(\Omega)}}{\sqrt{\eps}} \leq C\|u\|_{H^1(\Omega)}\]
and
\[\lim_{\eps \rightarrow 0} \|u_\eps-u\|_{H^1(\Omega)}=0.\]
\end{theorem}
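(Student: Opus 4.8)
The plan is to establish the three assertions of Theorem \ref{base} in the natural order, using the bilinear structure of \eqref{Pr_cauchy} and the semi-norm $\|\cdot\|$.

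\textbf{Well-posedness and the first estimate.} First I would reduce to a problem with homogeneous Dirichlet data on $\Gamma$: pick a lifting $u_{g_0}\in H^1(\Omega)$ of $g_0$ with $\|u_{g_0}\|_{H^1(\Omega)}\leq C\|g_0\|_{H^{1/2}(\Gamma)}$, write $u_\eps = u_{g_0} + \tilde u_\eps$ with $\tilde u_\eps\in V_0$, and move the resulting terms to the right-hand side. The system then reads: find $(\tilde u_\eps,\la_\eps)\in V_0\times\tilde V_0$ with, for all $(v,\mu)\in V_0\times\tilde V_0$,
\begin{equation*}
\eps(\tilde u_\eps,v) + (\la_\eps,v) = -\eps(u_{g_0},v),\qquad (\tilde u_\eps,\mu) - (\la_\eps,\mu) = \ell(\mu),
\end{equation*}
where $\ell(\mu) = \langle g_1,\mu\rangle - (u_{g_0},\mu)$ is a bounded linear form on $\tilde V_0$ with $\|\ell\|\leq C(\|g_0\|_{H^{1/2}(\Gamma)}+\|g_1\|_{H^{-1/2}(\Gamma)})$. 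To solve this, I would test the first equation with $v=\tilde u_\eps$ and the second with $\mu=\la_\eps$ and subtract, obtaining $\eps\|\tilde u_\eps\|^2 + \|\la_\eps\|^2 = -\eps(u_{g_0},\tilde u_\eps) - \ell(\la_\eps)$. Applying Cauchy--Schwarz and Young's inequality to absorb $\eps\|\tilde u_\eps\|^2$ and $\tfrac12\|\la_\eps\|^2$ gives $\eps\|\tilde u_\eps\|^2 + \|\la_\eps\|^2 \leq C\eps\|u_{g_0}\|^2 + C\|\ell\|^2$, which, since $\eps\leq 1$, yields the stated bound $\sqrt\eps\|u_\eps\|_{H^1(\Omega)} + \|\la_\eps\|_{H^1(\Omega)}\leq C(\|g_0\|_{H^{1/2}(\Gamma)}+\|g_1\|_{H^{-1/2}(\Gamma)})$. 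This a priori estimate also gives uniqueness (take $g_0=g_1=0$), and existence follows either from Lax--Milgram applied to a suitable coercive reformulation (e.g. eliminate $\la_\eps$: from the symmetric saddle-point structure one checks the Babuška--Brezzi inf-sup condition, the key point being that testing against $(v,\mu)=(v,-v)$ for $v\in V_0\cap\tilde V_0$ and against $(0,\mu)$ controls both blocks) or, more simply, by noting that the operator is a compact perturbation of an isomorphism and applying the uniqueness just proved through Fredholm alternative.

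\textbf{The estimate under the compatibility assumption.} Now suppose the Cauchy problem \eqref{cauchy_strong} has a solution $u\in H^1(\Omega)$. The trick is to use $u$ itself as a test-comparison. Consider the difference $w_\eps := u_\eps - u \in V_0$ (since $u_\eps|_\Gamma = g_0 = u|_\Gamma$). Since $\Delta u = 0$ in $\Omega$ and $\partial_\nu u = g_1$ on $\Gamma$, Green's formula gives $(u,\mu) = \langle g_1,\mu\rangle$ for all $\mu\in\tilde V_0$ (there is no contribution from $\tilde\Gamma$ because $\mu$ vanishes there, and the $\Gamma$-contribution is exactly the duality pairing with $g_1$). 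Subtracting this from the second equation of \eqref{Pr_cauchy} yields $(w_\eps,\mu) = (\la_\eps,\mu)$ for all $\mu\in\tilde V_0$; in particular $(w_\eps - \la_\eps,\mu) = 0$ for all $\mu\in\tilde V_0$. Taking $\mu = \la_\eps$ gives $(w_\eps,\la_\eps) = \|\la_\eps\|^2$. On the other hand, the first equation of \eqref{Pr_cauchy} tested with $v = w_\eps \in V_0$ gives $\eps(u_\eps,w_\eps) + (\la_\eps,w_\eps) = 0$, hence $\|\la_\eps\|^2 = (w_\eps,\la_\eps) = -\eps(u_\eps,w_\eps) = -\eps(u_\eps,u_\eps - u) = -\eps\|u_\eps\|^2 + \eps(u_\eps,u)$. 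Therefore
\begin{equation*}
\eps\|u_\eps\|^2 + \|\la_\eps\|^2 = \eps(u_\eps,u) \leq \eps\,\|u_\eps\|\,\|u\| \leq \tfrac{\eps}{2}\|u_\eps\|^2 + \tfrac{\eps}{2}\|u\|^2,
\end{equation*}
so $\eps\|u_\eps\|^2 + 2\|\la_\eps\|^2 \leq \eps\|u\|^2$, giving both $\|u_\eps\|\leq\|u\|$ and $\|\la_\eps\|^2\leq\tfrac{\eps}{2}\|u\|^2$, i.e. $\|u_\eps\|_{H^1(\Omega)} + \|\la_\eps\|_{H^1(\Omega)}/\sqrt\eps \leq C\|u\|_{H^1(\Omega)}$ after applying Poincaré to pass from semi-norm to full norm (using $w_\eps\in V_0$ and $\la_\eps\in\tilde V_0$, and the bound on $u_\eps$ via $u$).

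\textbf{Convergence.} Finally, to show $\|u_\eps - u\|_{H^1(\Omega)}\to 0$. From the previous estimate the family $(u_\eps)$ is bounded in $H^1(\Omega)$, so along a subsequence $u_\eps \rightharpoonup u_\star$ weakly in $H^1(\Omega)$ for some $u_\star$, with $u_\star|_\Gamma = g_0$ and $\Delta u_\star = 0$ (passing to the limit in the distributional sense, or noting $u_\eps$ harmonic). Also $\|\la_\eps\|\leq C\sqrt\eps\to 0$, so $\la_\eps\to 0$ strongly. Passing to the limit in the relation $(w_\eps,\mu) = (\la_\eps,\mu)$, i.e. $(u_\eps - u,\mu) = (\la_\eps,\mu)$, gives $(u_\star - u,\mu) = 0$ for all $\mu\in\tilde V_0$; combined with $u_\star - u \in V_0$ and $\Delta(u_\star - u) = 0$, a density/Green argument shows $u_\star - u$ has vanishing Cauchy data on $\Gamma$, hence $u_\star = u$ by Holmgren uniqueness. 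Thus the whole family converges weakly to $u$. For strong convergence, I would show $\|u_\eps\|\to\|u\|$: from the identity $\eps\|u_\eps\|^2 + 2\|\la_\eps\|^2 = \eps\|u\|^2 - \eps\|u_\eps - u\|^2$... actually more directly, revisiting $\|\la_\eps\|^2 = -\eps\|u_\eps\|^2 + \eps(u_\eps,u)$ we get $\|u_\eps\|^2 = (u_\eps,u) - \|\la_\eps\|^2/\eps \leq (u_\eps,u)$; taking $\liminf$ and using weak convergence $(u_\eps,u)\to\|u\|^2$ gives $\limsup\|u_\eps\|^2\leq\|u\|^2$, while weak lower semicontinuity gives $\liminf\|u_\eps\|\geq\|u\|$, so $\|u_\eps\|\to\|u\|$. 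Weak convergence plus norm convergence in the Hilbert space $V_0$ (equipped with $\|\cdot\|$) yields $\|u_\eps - u\|\to 0$, and Poincaré upgrades this to $\|u_\eps - u\|_{H^1(\Omega)}\to 0$; a standard subsequence argument removes the restriction to a subsequence since the limit is unique.

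\textbf{Main obstacle.} The only delicate point is the justification of the limiting Cauchy-data argument (identifying $u_\star = u$): one must argue that $u_\star - u\in V_0$, harmonic, and $L^2$-orthogonal in the semi-norm sense to all of $\tilde V_0$, forces the Neumann trace on $\Gamma$ to vanish, so that Holmgren applies. This is where the precise definition of $\tilde H^{1/2}(\Gamma)$ and the duality pairing enters, and care is needed near the junction $\overline\Gamma\cap\overline{\tilde\Gamma}$; everything else is routine energy bookkeeping.
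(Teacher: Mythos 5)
Your proposal is correct and follows essentially the same route as the paper: lifting of $g_0$, the test-and-subtract energy identity with Young's inequality for the first bound, the identity $\eps(u_\eps,u_\eps-u)+\|\la_\eps\|^2=0$ (obtained from the weak characterization of the exact solution $u$) for the compatible-data bound, and weak-limit identification plus norm convergence for the strong convergence. The only point to tighten is existence: no reformulation, inf-sup argument or Fredholm alternative is needed (and the ``compact perturbation of an isomorphism'' route would in fact fail, since the off-diagonal coupling $(\la,v)-(u,\mu)$ involves full gradients and is not compact) --- the form $A_\eps((u,\la);(v,\mu))=\eps(u,v)+(\la,v)-(u,\mu)+(\la,\mu)$ is already coercive on $V_0\times\tilde V_0$ for each fixed $\eps>0$, which is exactly what your test-and-subtract computation shows, so Lax--Milgram applies directly as in the paper.
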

\noindent To prove such theorem, we need the following Lemma, which establishes an equivalent weak formulation to problem (\ref{cauchy_strong}) and which is proved in \cite{bourgeois_recoquillay}.
\begin{lemma}
For $(g_0,g_1) \in H^{1/2}(\Gamma) \times H^{-1/2}(\Gamma)$, the function $u \in H^1(\Omega)$ is a solution to problem (\ref{cauchy_strong}) if and only if
$u|_{\Gamma}=g_0$ and for all $\mu \in H^1(\Omega)$ with $\mu|_{\tilde{\Gamma}}=0$, we have 
\be \int_\Omega \nabla u \cdot \nabla \mu\ \,dx= \langle g_1,\mu \rangle_{H^{-1/2}(\Gamma),\tilde{H}^{1/2}(\Gamma)}. \label{cauchy_weak}\ee
\end{lemma}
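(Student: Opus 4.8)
The plan is to establish the stated equivalence in both directions, the central tool being the Green formula for $H^1$ functions whose Laplacian lies in $L^2(\Omega)$. Recall that for any $u \in H^1(\Omega)$ with $\Delta u \in L^2(\Omega)$, the normal trace $\partial_\nu u$ is a well-defined element of $H^{-1/2}(\partial\Omega)$ characterized by
\[ \langle \partial_\nu u, v \rangle_{H^{-1/2}(\partial\Omega), H^{1/2}(\partial\Omega)} = \int_\Omega \nabla u \cdot \nabla v \, dx + \int_\Omega (\Delta u)\, v \, dx \qquad \forall v \in H^1(\Omega). \]
The condition $\partial_\nu u = g_1$ on $\Gamma$ in (\ref{cauchy_strong}) must be understood in the dual sense: the restriction of $\partial_\nu u$ to $\Gamma$, defined by duality against functions of $\tilde{H}^{1/2}(\Gamma)$, coincides with $g_1$. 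Since $\mu|_{\tilde{\Gamma}} = 0$ precisely means that the trace $\mu|_{\partial\Omega}$ is supported in $\overline{\Gamma}$ and that $\mu|_{\Gamma}$, extended by zero, belongs to $\tilde{H}^{1/2}(\Gamma)$, the pairing on the right-hand side of (\ref{cauchy_weak}) is meaningful.

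For the direct implication, assume $u$ solves (\ref{cauchy_strong}). Then $u|_{\Gamma} = g_0$ holds by definition, and applying the Green formula above with $\Delta u = 0$ yields, for every $\mu \in H^1(\Omega)$ with $\mu|_{\tilde{\Gamma}} = 0$, the identity $\int_\Omega \nabla u \cdot \nabla \mu \, dx = \langle \partial_\nu u, \mu \rangle_{H^{-1/2}(\partial\Omega), H^{1/2}(\partial\Omega)}$. Because $\mu|_{\tilde{\Gamma}} = 0$, this boundary pairing reduces to the duality between $H^{-1/2}(\Gamma)$ and $\tilde{H}^{1/2}(\Gamma)$ of $\partial_\nu u|_{\Gamma}$ against $\mu|_{\Gamma}$; using $\partial_\nu u|_{\Gamma} = g_1$ gives exactly (\ref{cauchy_weak}).

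For the converse, suppose $u|_{\Gamma} = g_0$ and (\ref{cauchy_weak}) holds for all admissible $\mu$. Testing first against $\mu \in C^\infty_0(\Omega)$, whose trace vanishes on all of $\partial\Omega$ so that the right-hand side is zero, gives $\int_\Omega \nabla u \cdot \nabla \mu \, dx = 0$; hence $\Delta u = 0$ in the distributional sense, and in particular $\Delta u \in L^2(\Omega)$, so that $\partial_\nu u \in H^{-1/2}(\partial\Omega)$ is well defined and the Green formula applies. Comparing the Green identity with (\ref{cauchy_weak}) then shows that $\langle \partial_\nu u, \mu \rangle_{H^{-1/2}(\partial\Omega), H^{1/2}(\partial\Omega)} = \langle g_1, \mu \rangle_{H^{-1/2}(\Gamma), \tilde{H}^{1/2}(\Gamma)}$ for all $\mu \in \tilde{V}_0 = \{\mu \in H^1(\Omega) : \mu|_{\tilde{\Gamma}} = 0\}$. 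It remains to deduce $\partial_\nu u|_{\Gamma} = g_1$ from this, which uses that the trace map $\mu \mapsto \mu|_{\Gamma}$ from $\tilde{V}_0$ is onto $\tilde{H}^{1/2}(\Gamma)$: any $\phi \in \tilde{H}^{1/2}(\Gamma)$, extended by zero, lies in $H^{1/2}(\partial\Omega)$ and admits an $H^1(\Omega)$-lifting vanishing on $\tilde{\Gamma}$. Thus the identity above holds with $\mu|_{\Gamma}$ ranging over all of $\tilde{H}^{1/2}(\Gamma)$, forcing $\partial_\nu u|_{\Gamma} - g_1 = 0$ in $H^{-1/2}(\Gamma) = (\tilde{H}^{1/2}(\Gamma))'$, which together with $u|_{\Gamma} = g_0$ recovers (\ref{cauchy_strong}).

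I expect the main obstacle to be the rigorous treatment of the boundary terms, namely the definition and handling of the restriction $\partial_\nu u|_{\Gamma}$ of the normal trace to the open part $\Gamma$ and its duality with $\tilde{H}^{1/2}(\Gamma)$ rather than with $H^{1/2}(\Gamma)$. The appearance of the nonstandard space $\tilde{H}^{1/2}(\Gamma)$ is precisely what makes the pairing in (\ref{cauchy_weak}) well-posed and what encodes the one-sided nature of the Cauchy data carried by $\Gamma$; establishing the surjectivity of the trace onto this space (equivalently, the zero-extension property) is the delicate technical point on which both implications hinge.
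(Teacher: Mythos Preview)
Your proof is correct and follows the natural route via the Green formula for $H^1(\Omega)$ functions with $\Delta u\in L^2(\Omega)$; the two implications are handled cleanly, and you have correctly identified the technical crux, namely the interpretation of $\partial_\nu u|_\Gamma$ as an element of $H^{-1/2}(\Gamma)=(\tilde H^{1/2}(\Gamma))'$ and the surjectivity of the trace map from $\tilde V_0$ onto $\tilde H^{1/2}(\Gamma)$.

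Note, however, that the paper does not actually supply its own proof of this lemma: it simply states the result and refers to \cite{bourgeois_recoquillay} for the argument. So there is no in-paper proof to compare against, but the approach you give is exactly the standard one and is presumably what is carried out in the cited reference.
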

\begin{proof}[Proof of Theorem \ref{base}]
Let us begin with the first part of the theorem.
There exists a continuous lifting operator  $g_0 \mapsto U$ from $H^{1/2}(\Gamma)$ to $H^1(\Omega)$ such that $U|_{\Gamma}=g_0$.
Let us define $\hat{u}_\eps=u_\eps-U \in V_0$.
By replacing in (\ref{Pr_cauchy}), we obtain that $(\hat{u}_\eps,\la_\eps) \in V_0 \times \tilde{V}_0$ satisfies, for all $(v,\mu) \in V_0 \times \tilde{V}_0$, the
system
\[
\left\{\begin{array}{rcl} 
\displaystyle \eps \int_\Omega \nabla \hat{u}_\eps \cdot \nabla v\,dx + \int_\Omega \nabla v\cdot \nabla \lambda_\eps\,dx&=&\dsp-\eps \int_\Omega \nabla U\cdot \nabla v\,dx\\[8pt]
\displaystyle \int_\Omega \nabla \hat{u}_\eps \cdot \nabla \mu\,dx - \int_\Omega \nabla \lambda_\eps\cdot \nabla \mu\,dx&=&\dsp\langle g_1,\mu \rangle_{H^{-1/2}(\Gamma),\tilde{H}^{1/2}(\Gamma)}- \int_{\Omega} \nabla U \cdot \nabla \mu\,dx.
\end{array} \right. 
\]
Well-posedness then relies on the Lax-Milgram Lemma applied to the coercive bilinear form
\[A_\eps((u,\la);(v,\mu))= \eps \int_\Omega \nabla u \cdot \nabla v\,dx + \int_\Omega \nabla v\cdot \nabla \lambda\,dx
-\int_\Omega \nabla u \cdot \nabla \mu\,dx + \int_\Omega \nabla \lambda\cdot \nabla \mu\,dx
\]
on $V_0 \times \tilde{V}_0$.
Choosing $v=\hat{u}_\eps$ and $\mu=\la_\eps$ and subtracting the two above equations, we obtain 
\[\eps \int_{\Omega} |\nabla \hat{u}_\eps|^2\,dx + \int_{\Omega} |\nabla \la_\eps|^2\,dx=-\eps \int_\Omega \nabla U \cdot \nabla \hat{u}_\eps\,dx-\langle g_1,\la_\eps \rangle+ \int_\Omega \nabla U\cdot \nabla \la_\eps\,dx.
\]
The Cauchy-Schwarz inequality implies
\[
\eps \|\hat{u}_\eps\|^2 +  \|\la_\eps\|^2 
\leq  \eps \| U\| \|\hat{u}_\eps\| + 
\|g_1\|_{H^{-1/2}(\Gamma)} \|\la_\eps\|_{H^{1/2}(\Gamma)} +  \| U\| \|\la_\eps\|.
\]
The equivalence of norm $\|\cdot\|$ and the standard $H^1(\Omega)$ norm in spaces $V_0$ and $\tilde{V}_0$, the continuity of the trace operator and the continuity of the lifting operator $g_0 \mapsto U$ yield
\[
\eps \|\hat{u}_\eps\|^2_{H^1(\Omega)} +  \|\la_\eps\|^2_{H^1(\Omega)}\leq  C\eps \|g_0\|_{H^{1/2}(\Gamma)} \|\hat{u}_\eps\|_{H^1(\Omega)} + 
(c\|g_1\|_{H^{-1/2}(\Gamma)} +  C\|g_0\|_{H^{1/2}(\Gamma)})\|\la_\eps\|_{H^1(\Omega)}.
\]
Using the Young's inequality to deal with the right hand side of the above inequality, the result follows.
Let us prove the second part of the theorem.
In the case when the Cauchy data $(g_0,g_1)$ is associated with the solution $u$, then $u$ satisfies the weak formulation (\ref{cauchy_weak}).
By subtracting (\ref{cauchy_weak}) to the second equation of (\ref{Pr_cauchy}), we obtain that for all $\mu \in \tilde{V}_0$,
\be \int_\Omega \nabla (u_\eps-u) \cdot \nabla \mu\,dx - \int_\Omega \nabla \lambda_\eps\cdot \nabla \mu\,dx=0.\label{second}\ee
Now setting $v=u_\eps-u \in V_0$ in the first equation of (\ref{Pr_cauchy}), setting $\mu=\la_\eps \in \tilde{V}_0$ in equation (\ref{second}) and subtracting the two obtained equations, we get
\[\eps \int_\Omega \nabla u_\eps \cdot \nabla (u_\eps-u)\,dx + \int_\Omega |\nabla \lambda_\eps|^2\,dx=0.\]
We deduce that the term $(u_\eps,u_\eps-u)$ in the above sum is nonpositive, which from the Cauchy-Schwarz inequality implies
that $\|u_\eps\| \leq \|u\|$ and then $\|\la_\eps\| \leq \sqrt{\eps} \|u\|$.
Hence there exists a constant $C$ such that 
\[\|u_\eps\|_{H^1(\Omega)} \leq C\|u\|_{H^1(\Omega)}\qquad\mbox{ and }\qquad \|\la_\eps\|_{H^1(\Omega)} \leq C\sqrt{\eps} \|u\|_{H^1(\Omega)}.\]
It remains to prove that $u_\eps \rightarrow u$ in $H^1(\Omega)$ when $\eps \rightarrow 0$.
The sequence $(u_\eps)$ is bounded in $H^1(\Omega)$. Therefore, there exists a subsequence, still denoted $(u_\eps)$, such that $u_\eps \rightharpoonup w$ in $H^1(\Omega)$ when $\eps \rightarrow 0$, with $w \in H^1(\Omega)$. Since the affine space $V_{g_0}$ is convex and closed, it is weakly closed. This guarantees that $w \in V_{g_0}$.
Besides, by passing to the limit in the second equation of (\ref{Pr_cauchy}) we obtain that $w$ satisfies the weak formulation (\ref{cauchy_weak}). Uniqueness in problem (\ref{cauchy_strong}) then implies that $w=u$, so that $(u_\eps)$ weakly converges to $u$ in $H^1(\Omega)$.
But
\[\|u_\eps-u\|^2=(u_\eps,u_\eps-u)-(u,u_\eps-u) \leq -(u,u_\eps-u),\]
so that weak convergence implies strong convergence.
Lastly, a standard contradiction argument enables us to conclude that all the sequence $(u_\eps)$ strongly converges to $u$ in $H^1(\Omega)$.
\end{proof}
\begin{remark}
Let us mention that another type of mixed formulation of quasi-reversibility was introduced in \cite{darde_finlandais}, 
in which the additional unknown lies in $H_{\rm div}(\Omega)$ instead of $H^1(\Omega)$. In addition, a notion of iterative formulation of quasi-reversibility was introduced and analyzed in \cite{darde}.
We believe that the quasi-reversibility formulation (\ref{Pr_cauchy}) is the easiest one to handle to establish regularity results of the weak solutions.
\end{remark} 
\noindent The estimates of Theorem \ref{base} involve $H^1(\Omega)$ norms of the regularized solution $(u_\eps,\la_\eps)$ in the case of a Lipschitz domain $\Omega$ and for the natural regularity of the Cauchy data $(g_0,g_1)$, that is $H^{1/2}(\Gamma) \times H^{-1/2}(\Gamma)$.
These estimates were derived in two different cases: the data $(g_0,g_1)$ are compatible or not.
The main concern of this paper is to analyze, when the domain $\Omega$ and the Cauchy data $(g_0,g_1)$ are more regular than Lipschitz
and $H^{1/2}(\Gamma) \times H^{-1/2}(\Gamma)$, respectively, the additional regularity of the solution $(u_\eps,\la_\eps)$, whether the data $(g_0,g_1)$ are compatible or not. 
We also want to obtain estimates in the corresponding norms.
In order to simplify the analysis, the additional regularity of the data $(g_0,g_1)$ is formulated in the following way: we assume that $(g_0,g_1)$ is such that there exists a function $U$ in $H^2(\Omega)$ with $(U|_{\Gamma},\partial_\nu U|_{\Gamma})=(g_0,g_1)$ and that we can define a continuous lifting operator $(g_0,g_1) \mapsto U$. 
Denoting $f=\Delta U \in L^2(\Omega)$ and considering the new translated unknown $u-U \rightarrow u$, the initial Cauchy problem (\ref{cauchy_strong}) can be transformed
into a homogeneous one (however still ill-posed): for $f \in L^2(\Omega)$, 
find $u \in H^1(\Omega)$ such that 
\be \left\{
\begin{array}{cccc}
 -\Delta u  &=& f&  \mbox{in } \Omega\\
 u&=&0  & \mbox{on } \Gamma \\
 \partial_\nu u&=&0 & \mbox{on } \Gamma.
\end{array}
\right.
\label{cauchy_strong_simple}
\ee
We emphasize that this regularity assumption made on the data is not an assumption of regularity of the solution $u$. It is simple to construct smooth data in the sense above such that the corresponding $u$ is only in $H^1(\Om)$ and not in $H^2(\Om)$. The mixed formulation of quasi-reversibility for problem (\ref{cauchy_strong_simple}) takes the following form: for $\eps >0$, find $(u_\eps,\lambda_\eps) \in V_0 \times \tilde{V}_0$ such that for all $(v,\mu) \in V_0 \times \tilde{V}_0$,
\begin{equation} 
\left\{\begin{array}{rcl} 
\displaystyle \eps \int_\Omega \nabla u_\eps \cdot \nabla v\,dx + \int_\Omega \nabla v\cdot \nabla \lambda_\eps\,dx&=&0\\[8pt]
\displaystyle \int_\Omega \nabla u_\eps \cdot \nabla \mu\,dx - \int_\Omega \nabla \lambda_\eps\cdot \nabla \mu\,dx&=&\dsp\int_\Omega f\mu\,dx.
\end{array} \right. 
\label{Pr_cauchy_simple} 
\end{equation}
Note that the strong equations corresponding to problem (\ref{Pr_cauchy_simple}) are
\be \left\{
\begin{array}{rcll}
\displaystyle  -(1+\eps)\Delta u_\eps  &=& f&  \mbox{in }\Omega\\
\displaystyle -(1+\eps)\Delta \la_\eps  &=& -\eps\,f&  \mbox{in }\Omega\\
 u_\eps&=&0  & \mbox{on }\Gamma \\
 \partial_\nu u_\eps -\partial_\nu \la_\eps&=&0 & \mbox{on }\Gamma \\
 \la_\eps&=&0  &  \mbox{on }\tilde{\Gamma} \\
 \eps\, \partial_\nu u_\eps + \partial_\nu \la_\eps&=&0 & \mbox{on }\tilde{\Gamma}.
\end{array}
\right.
\label{Pr_cauchy_simple_strong}
\ee
The analog of Theorem \ref{base}, the proof of which is skipped, is the following. 
\begin{theorem}
\label{base_simple}
For all $f \in L^2(\Omega)$ and $\eps>0$, the problem (\ref{Pr_cauchy_simple}) has a unique solution $(u_\eps,\la_\eps) \in V_{0} \times \tilde{V}_0$.
There exists a constant $C$ which depends only on the geometry such that
\be \forall \eps \in (0,1], \quad \sqrt{\eps}\|u_\eps\|_{H^1(\Omega)} + \|\la_\eps\|_{H^1(\Omega)} \leq C \|f\|_{L^2(\Omega)}. \label{estim1}\ee
If in addition we assume that $f$ is such that problem (\ref{cauchy_strong_simple}) has a (unique) solution $u$, then there exists a constant $C$ which depends only on the geometry such that
\be \forall \eps>0,\quad \|u_\eps\|_{H^1(\Omega)} + \frac{\|\la_\eps\|_{H^1(\Omega)}}{\sqrt{\eps}}\leq C\|u\|_{H^1(\Omega)} \label{estim2}\ee
and
\[\lim_{\eps \rightarrow 0} \|u_\eps-u\|_{H^1(\Omega)}=0.\]
\end{theorem}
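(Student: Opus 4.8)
The plan is to transcribe, almost line by line, the proof of Theorem~\ref{base}, the only structural differences being that here no lifting of a Dirichlet datum is required (one works directly in $V_0 \times \tilde V_0$) and that the duality pairing $\langle g_1,\mu\rangle$ is replaced by the $L^2$ pairing $(v,\mu)\mapsto\int_\Om f\mu\,dx$, which is obviously continuous on $V_0 \times \tilde V_0$ with norm at most $C\|f\|_{L^2(\Om)}$ by the Cauchy--Schwarz and Poincar\'e inequalities.

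First I would record the weak formulation equivalent to (\ref{cauchy_strong_simple}): a function $u \in H^1(\Om)$ solves (\ref{cauchy_strong_simple}) if and only if $u|_\Gamma = 0$ and $\int_\Om \nabla u \cdot \nabla \mu\,dx = \int_\Om f\mu\,dx$ for all $\mu \in \tilde V_0$. This is exactly the Lemma above applied to the translated problem; alternatively one proves it directly, the forward implication following from Green's formula (legitimate since $\Delta u = -f \in L^2(\Om)$, so that $\partial_\nu u$ is a well-defined element of $H^{-1/2}(\partial\Om)$) combined with $\mu|_{\tilde\Gamma}=0$ and $\partial_\nu u|_\Gamma = 0$, and the converse by testing first against $C^\infty_c(\Om)$ to recover $-\Delta u = f$ and then against $\tilde V_0$, using the density of the traces of $\tilde V_0$ in $\tilde H^{1/2}(\Gamma)$ to recover $\partial_\nu u|_\Gamma = 0$.

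For the first assertion, I would apply the Lax--Milgram lemma to the bilinear form $A_\eps$ already introduced in the proof of Theorem~\ref{base}, which is bounded on $V_0 \times \tilde V_0$ and coercive there (with constant $\min(\eps,1)$), the right-hand side now being $(v,\mu)\mapsto -\int_\Om f\mu\,dx$; this gives existence and uniqueness of $(u_\eps,\la_\eps)$. To obtain the uniform bound (\ref{estim1}), I would instead test the first equation of (\ref{Pr_cauchy_simple}) with $v = u_\eps$, the second with $\mu = \la_\eps$, and subtract, which yields $\eps\|u_\eps\|^2 + \|\la_\eps\|^2 = -\int_\Om f\la_\eps\,dx \le C\|f\|_{L^2(\Om)}\|\la_\eps\|$; this gives successively $\|\la_\eps\| \le C\|f\|_{L^2(\Om)}$ and $\sqrt\eps\,\|u_\eps\| \le C\|f\|_{L^2(\Om)}$, whence (\ref{estim1}) after invoking the equivalence of $\|\cdot\|$ with the $H^1(\Om)$-norm on $V_0$ and $\tilde V_0$.

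For the second assertion, assuming $f$ is compatible with solution $u$, I would subtract the weak formulation above from the second equation of (\ref{Pr_cauchy_simple}) to get $\int_\Om \nabla(u_\eps-u)\cdot\nabla\mu\,dx - \int_\Om \nabla\la_\eps\cdot\nabla\mu\,dx = 0$ for all $\mu \in \tilde V_0$, then choose $v = u_\eps - u$ in the first equation of (\ref{Pr_cauchy_simple}) and $\mu = \la_\eps$ in the identity just obtained, and subtract, arriving at $\eps(u_\eps,u_\eps-u) + \|\la_\eps\|^2 = 0$. Hence $(u_\eps,u_\eps-u) \le 0$, so $\|u_\eps\| \le \|u\|$ by Cauchy--Schwarz, and then $\|\la_\eps\|^2 = \eps\big((u_\eps,u) - \|u_\eps\|^2\big) \le \eps\|u_\eps\|\,\|u\| \le \eps\|u\|^2$, which yields (\ref{estim2}). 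Finally, the $H^1(\Om)$-boundedness of $(u_\eps)$ produces a weakly convergent subsequence $u_\eps \rightharpoonup w$ with $w \in V_0$ ($V_0$ being a closed linear subspace, hence weakly closed); passing to the limit in the second equation of (\ref{Pr_cauchy_simple}) and using $\|\la_\eps\| \to 0$ shows that $w$ satisfies the weak formulation, whence $w = u$ by Holmgren's uniqueness. Strong convergence then follows from $\|u_\eps - u\|^2 = (u_\eps,u_\eps-u) - (u,u_\eps-u) \le -(u,u_\eps-u) \to 0$, and a standard contradiction argument extends this to the whole family. I expect no genuinely new obstacle here: the only point requiring a little care is the equivalent weak formulation of (\ref{cauchy_strong_simple}), namely checking that $\partial_\nu u|_\Gamma$ is meaningful so that Green's formula applies, which is guaranteed precisely because $\Delta u = -f \in L^2(\Om)$.
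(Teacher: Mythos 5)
Your proposal is correct and is essentially the adaptation of the proof of Theorem \ref{base} that the paper itself intends (it explicitly skips the proof of Theorem \ref{base_simple} as being the analogue of that of Theorem \ref{base}): the same Lax--Milgram argument with the form $A_\eps$, the same energy identity $\eps\|u_\eps\|^2+\|\la_\eps\|^2=-\int_\Om f\la_\eps\,dx$ for (\ref{estim1}), the same orthogonality trick giving $(u_\eps,u_\eps-u)\le 0$ for (\ref{estim2}), and the same weak-to-strong convergence argument. The only point you add beyond a transcription is the justification of the equivalent weak formulation of (\ref{cauchy_strong_simple}) via Green's formula, which is handled correctly.
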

\noindent The objective is now to study the regularity of the solution $(u_\eps,\la_\eps)$ to problem (\ref{Pr_cauchy_simple}) and to complete the statements (\ref{estim1}) and (\ref{estim2}) of Theorem \ref{base_simple} by giving estimates in stronger norms. 
One objective, as will be seen in section \ref{application}, is the following.
In practice, one has to solve problem (\ref{Pr_cauchy_simple}) in the presence of two approximations. Firstly, the data $f$ is altered by some noise of amplitude $\delta$. Secondly, the problem (\ref{Pr_cauchy_simple}) is discretized, for instance with the help of a Finite Element Method based on a mesh of size $h$. It is then desirable to estimate the error between the approximated solution and the exact solution as a function of $\eps$, $\delta$ and $h$. Such error estimate for the $H^1(\Omega)$ norm needs the solution to be in a Sobolev space $H^s(\Omega)$, with $s>1$. 
It could be noted that in a recent contribution \cite{burman_larson_oksanen} (see also \cite{Burm13,Burm14,Burm17,BuHL18}), a discretized method was proposed in order to regularize the Cauchy
problem (\ref{cauchy_strong}) in the presence of noisy data without introducing a regularized problem such as (\ref{Pr_cauchy_simple}) at the continuous level. In some sense, the method of \cite{burman_larson_oksanen} relies on a single asymptotic parameter, that is $h$, instead of two in our method, that is $\eps$ and $h$.
However, we believe that from the theoretical point of view, the regularity of quasi-reversibility solutions is an interesting problem in itself.
To our best knowledge, it has never been investigated up to now.
The difficulty stems from the fact that we analyze the regularity of a problem involving a small parameter $\eps$ which degenerates when $\eps$ tends to $0$.
There are other contributions (see e.g. \cite{Kirs85,costabel_dauge,ChCN14,ChCN18,NaPo18,NaPT18}) where regularity results or asymptotic expansions are obtained in situations where the limit problem has a different nature from the regularized one. For example in \cite{costabel_dauge}, the authors study a mixed Neumann-Robin problem where the small parameter $\eps$ is the inverse of the Robin coefficient. But while both the perturbed problem and the limit one are well-posed in \cite{costabel_dauge}, only the perturbed problem is well-posed in our case, the limit problem being ill-posed (in any framework). Our contribution is original in this sense. In the present work, we study the regularity of the solution of the regularized problem as $\eps$ tends to zero. We emphasize that computing an asymptotic expansion of the solution with respect to $\eps$ and proving error estimates (for example as in \cite{Ilin92,MaNP00}) remains an open problem, the reason being that, due to the ill-posedness of the limit problem, no result of stability can be easily established. \\  
\newline
Our paper is organized as follows. First we consider the simple case of a smooth domain in Section \ref{SectionSmoothDomain}, where classical regularity results (see for example \cite{brezis}) can be used. The case of the polygonal domain is introduced
in Section \ref{SectionPolygonalDomain}, where we also analyze the regularity of the quasi-reversibility solution in corners delimited by two edges of $\Gamma$ or two edges of $\tilde{\Gamma}$. In this case, the regularity of functions $u_\eps$ and $\la_\eps$ can be analyzed separately with the help of the classical regularity results of \cite{grisvard_bleu} in a polygon for the Laplace equation with Dirichlet or Neumann boundary conditions. In Section \ref{CornerGammaGammaTilde} we consider the more difficult case of a corner of mixed type, that is delimited by one edge of $\Gamma$ and one edge of $\tilde{\Gamma}$. This analysis relies on the Kondratiev approach \cite{Kond67}, which is based on some properties of weighted Sobolev spaces which are recalled in Section \ref{SectionWeightedSobo}. Section \ref{application} is dedicated to the application of our regularity results to derive some error estimate between the exact solution and the quasi-reversibility solution in the presence of two perturbations: noisy data and discretization with the help of a Finite Element Method. Two appendices containing technical results, which are used in Section \ref{CornerGammaGammaTilde}, complete the paper. The main results of this article are Theorem \ref{global_smooth} (uniform regularity estimates in smooth domains), Theorem \ref{main} (uniform regularity estimates in 2D polygonal domains) and the final approximation analysis of Section \ref{application}.

\section{The case of a smooth domain}\label{SectionSmoothDomain}
Let us first assume that $\Omega$ is a domain of class $C^{1,1}$.
If $(g_0,g_1) \in H^{3/2}(\Gamma) \times H^{1/2}(\Gamma)$, then there exists a function $U \in H^2(\Omega)$ such that
$(U|_{\Gamma},\partial_\nu U|_\Gamma)=(g_0,g_1)$ and even a continuous lifting operator $(g_0,g_1) \mapsto U$ from $H^{3/2}(\Gamma) \times H^{1/2}(\Gamma)$ to $H^2(\Omega)$ (see Theorem 1.5.1.2 in \cite{Gris85}). We are therefore in the situation described in the Section \ref{introduction}, where the problem to solve is (\ref{cauchy_strong_simple}).
We begin with an interior regularity result.
\begin{proposition}
\label{interior}
For $f \in L^2(\Omega)$, the solution $(u_\eps,\la_\eps) \in V_{0} \times \tilde{V}_0$ to the problem (\ref{Pr_cauchy_simple}) is such that for all $\zeta\in \mathscr{C}_0^\infty(\Omega)$, $\zeta u_\eps$ and $\zeta \la_\eps$ belong to $H^2(\Omega)$ and there exists a constant $C>0$ which depends only on the geometry such that
\[\forall \eps \in (0,1], \quad \sqrt{\eps}\|\zeta u_\eps\|_{H^2(\Omega)} + \|\zeta \la_\eps\|_{H^2(\Omega)} \leq C \|f\|_{L^2(\Omega)}.\]
If in addition $f$ is such that problem (\ref{cauchy_strong_simple}) has a solution $u$, then
\[\forall \eps \in (0,1], \quad \|\zeta u_\eps\|_{H^2(\Omega)} + \frac{\|\zeta \la_\eps\|_{H^2(\Omega)}}{\sqrt{\eps}} \leq C \|u\|_{H^1(\Delta,\Omega)},\]
where  the norm $\|\cdot\|_{H^1(\Delta,\Omega)}$ is defined by
\[\|u\|^2_{H^1(\Delta,\Omega)}=\|u\|_{H^1(\Omega)}^2 + \|\Delta u\|_{L^2(\Omega)}^2.\]
\end{proposition}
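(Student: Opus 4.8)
The strategy is the standard interior-regularity bootstrap for elliptic systems, carried out simultaneously for $u_\eps$ and $\la_\eps$ so that the $\eps$-dependence can be tracked explicitly. The starting point is Theorem~\ref{base_simple}, which controls $\sqrt{\eps}\,\|u_\eps\|_{H^1(\Omega)}+\|\la_\eps\|_{H^1(\Omega)}$ by $\|f\|_{L^2(\Omega)}$ (resp.\ $\|u_\eps\|_{H^1(\Omega)}+\eps^{-1/2}\|\la_\eps\|_{H^1(\Omega)}$ by $\|u\|_{H^1(\Omega)}$ in the compatible case). From the strong form \eqref{Pr_cauchy_simple_strong} we know $u_\eps$ and $\la_\eps$ are, up to the factor $(1+\eps)$, solutions of Poisson's equation in the interior with right-hand sides $f/(1+\eps)$ and $-\eps f/(1+\eps)$ respectively, the latter being crucial: the source term for $\la_\eps$ carries an extra power of $\eps$.

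First I would fix $\zeta\in\mathscr{C}_0^\infty(\Omega)$ and pick $\zeta_1\in\mathscr{C}_0^\infty(\Omega)$ with $\zeta_1\equiv 1$ on a neighbourhood of $\supp\zeta$. Applying the classical interior elliptic estimate (see \cite{brezis}) to $-\Delta u_\eps = f/(1+\eps)$, one gets $\zeta u_\eps\in H^2(\Omega)$ with
\[
\|\zeta u_\eps\|_{H^2(\Omega)} \le C\big(\|f\|_{L^2(\Omega)} + \|\zeta_1 u_\eps\|_{H^1(\Omega)}\big),
\]
the constant $C$ depending only on $\zeta,\zeta_1$ (hence only on the geometry once these cutoffs are fixed) and not on $\eps\in(0,1]$, since $1/(1+\eps)\le 1$. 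Similarly, from $-\Delta\la_\eps = -\eps f/(1+\eps)$,
\[
\|\zeta \la_\eps\|_{H^2(\Omega)} \le C\big(\eps\|f\|_{L^2(\Omega)} + \|\zeta_1 \la_\eps\|_{H^1(\Omega)}\big).
\]
Now multiply the first estimate by $\sqrt{\eps}$, add the second, and use Theorem~\ref{base_simple}: $\sqrt{\eps}\,\|\zeta_1 u_\eps\|_{H^1(\Omega)}+\|\zeta_1\la_\eps\|_{H^1(\Omega)}\le C\|f\|_{L^2(\Omega)}$, together with $\sqrt{\eps}\cdot\|f\|+\eps\|f\|\le 2\|f\|$ for $\eps\in(0,1]$. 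This yields the first inequality.

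For the compatible case the same two interior estimates hold verbatim; now I would instead invoke the bound $\|u_\eps\|_{H^1(\Omega)}+\eps^{-1/2}\|\la_\eps\|_{H^1(\Omega)}\le C\|u\|_{H^1(\Omega)}\le C\|u\|_{H^1(\Delta,\Omega)}$ from Theorem~\ref{base_simple}. Plugging $\|\zeta_1 u_\eps\|_{H^1}\le C\|u\|_{H^1(\Delta,\Omega)}$ into the $u_\eps$ estimate gives $\|\zeta u_\eps\|_{H^2}\le C(\|f\|_{L^2}+\|u\|_{H^1(\Delta,\Omega)})$; but $f=-\Delta u$ by \eqref{cauchy_strong_simple}, so $\|f\|_{L^2}\le\|u\|_{H^1(\Delta,\Omega)}$, giving the $u_\eps$ part. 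For $\la_\eps$, divide the second estimate by $\sqrt{\eps}$: $\eps^{-1/2}\|\zeta\la_\eps\|_{H^2}\le C(\sqrt{\eps}\,\|f\|_{L^2}+\eps^{-1/2}\|\zeta_1\la_\eps\|_{H^1})\le C(\|f\|_{L^2}+\|u\|_{H^1(\Delta,\Omega)})\le C\|u\|_{H^1(\Delta,\Omega)}$, using $\sqrt{\eps}\le 1$ and the $\eps^{-1/2}$-weighted $H^1$ bound on $\la_\eps$. Adding the two completed estimates gives the second inequality.

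The only genuinely delicate point is bookkeeping of the constant: one must verify that the constant in the classical interior estimate is independent of $\eps$. This is immediate because the principal part of both equations is exactly $-\Delta$ (the $\eps$ and $(1+\eps)$ factors only rescale lower-order-free source terms, and $1/(1+\eps)$ stays bounded), so no uniform-ellipticity subtlety arises; the standard difference-quotient / cutoff argument applies with an $\eps$-free constant. There is no boundary regularity to worry about here since $\zeta$ has compact support in $\Omega$, and the coupling between $u_\eps$ and $\la_\eps$ in \eqref{Pr_cauchy_simple_strong} is entirely through boundary conditions on $\partial\Omega$, hence invisible in the interior. I would therefore expect the proof to be short, the main care being to state the cutoff hierarchy cleanly and to combine the two scalar estimates with the correct powers of $\eps$.
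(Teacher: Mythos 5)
Your proposal is correct and follows essentially the same route as the paper: treat $u_\eps$ and $\la_\eps$ separately via their interior Poisson equations (noting the extra factor $\eps$ in the source for $\la_\eps$), apply an $\eps$-independent interior elliptic estimate to the cut-off functions, and close with the $H^1$ bounds of Theorem \ref{base_simple}. The only cosmetic difference is that the paper obtains the interior estimate by writing $-\Delta(\zeta u_\eps)+\zeta u_\eps=F_\eps$ and using the Fourier transform identity $\|\zeta u_\eps\|_{H^2(\mathbb{R}^2)}=\|F_\eps\|_{L^2(\mathbb{R}^2)}$, whereas you invoke the classical interior estimate with a second cutoff $\zeta_1$.
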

\begin{proof}
From the first equation of (\ref{Pr_cauchy_simple_strong}), we have that
\[-\Delta (\zeta u_\eps)+ \zeta u_\eps=(-\Delta \zeta +\zeta)u_\eps -2\nabla \zeta \cdot \nabla u_\eps + \zeta \frac{f}{1+\eps}:=F_\eps.\]
Clearly $F_\eps \in L^2(\mathbb{R}^2)$, which by using the Fourier transform implies that
\[\|\zeta u_\eps\|_{H^2(\mathbb{R}^2)} = \|F_\eps\|_{L^2(\mathbb{R}^2)},\]
and hence
\[\|\zeta u_\eps\|_{H^2(\Omega)} = \|F_\eps\|_{L^2(\Omega)} \leq C\, (\|u_\eps\|_{H^1(\Omega)}+ \|f\|_{L^2(\Omega)}).\]
From (\ref{estim1}) we obtain that 
\[\sqrt{\eps}\|\zeta u_\eps\|_{H^2(\Omega)} \leq C\, \|f\|_{L^2(\Omega)}.\]
If in addition $f$ is such that problem (\ref{cauchy_strong_simple}) has a (unique) solution $u$, from (\ref{estim2}) we obtain
\[\|\zeta u_\eps\|_{H^2(\Omega)} \leq C\, \|u\|_{H^1(\Delta,\Omega)}.\]
The estimates of $\zeta \la_\eps$ are obtained following the same lines. 
\end{proof}
\noindent Let us now establish a global regularity estimate (up to the boundary) in the restricted case when $\overline{\Gamma} \cap \overline{\tilde{\Gamma}}= \emptyset$ (see Figure \ref{SchematicPicture} right).
\begin{theorem}\label{global_smooth}
For $f \in L^2(\Omega)$, the solution $(u_\eps,\la_\eps) \in V_{0} \times \tilde{V}_0$ to the problem (\ref{Pr_cauchy_simple}) is such that $u_\eps$ and $\la_\eps$ belong to $H^2(\Omega)$ and there exists a constant $C>0$ which depends only on the geometry such that
\[\forall \eps \in (0,1], \quad \eps\|u_\eps\|_{H^2(\Omega)} + \sqrt{\eps}\|\la_\eps\|_{H^2(\Omega)} \leq C \|f\|_{L^2(\Omega)}.\]
If in addition $f$ is such that problem (\ref{cauchy_strong_simple}) has a solution $u$, then
\[\forall \eps \in (0,1], \quad \sqrt{\eps}\|u_\eps\|_{H^2(\Omega)} + \|\la_\eps\|_{H^2(\Omega)} \leq C \|u\|_{H^1(\Delta,\Omega)}.\]
\end{theorem}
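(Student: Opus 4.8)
The plan is to reduce the coupled system \eqref{Pr_cauchy_simple_strong} to two \emph{decoupled} scalar elliptic problems with standard boundary conditions, so that the classical $H^2$-regularity theory of \cite{brezis,Gris85} for the Laplacian in a $C^{1,1}$ domain applies directly. The crucial simplification here is the hypothesis $\overline{\Gamma} \cap \overline{\tilde\Gamma} = \emptyset$: since $\Gamma$ and $\tilde\Gamma$ have disjoint closures and $\partial\Omega$ is smooth, a single boundary condition holds in a full neighbourhood of each connected component of $\partial\Omega$, so no mixed-type corner or transmission-across-a-junction issue arises. First I would observe that from the last two equations of \eqref{Pr_cauchy_simple_strong} we get, on $\tilde\Gamma$, both $\la_\eps = 0$ and $\partial_\nu\la_\eps = -\eps\,\partial_\nu u_\eps$; combining with $\partial_\nu u_\eps = \partial_\nu\la_\eps$ on $\Gamma$ does not decouple things on its own, so instead I would work with the \emph{sum} and a suitable \emph{combination}. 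Concretely, set $w_\eps := u_\eps + \la_\eps$ (or rather treat $\la_\eps$ and $u_\eps$ separately, see below). On $\tilde\Gamma$, $\la_\eps$ satisfies a homogeneous Dirichlet condition; on $\Gamma$, from $\eps\,\partial_\nu u_\eps = -\partial_\nu\la_\eps$ on $\tilde\Gamma$ and $\partial_\nu u_\eps - \partial_\nu\la_\eps = 0$ on $\Gamma$ we cannot read off a boundary condition for $\la_\eps$ alone on $\Gamma$ — so the honest route is to first handle $u_\eps$.

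The key steps, in order. (1) Cover $\overline\Omega$ by a finite family of open sets: interior balls, where Proposition~\ref{interior} already gives the needed $H^2$ bound with the stated powers of $\eps$; boundary patches meeting only $\Gamma$; and boundary patches meeting only $\tilde\Gamma$. Subordinate a smooth partition of unity. (2) On a patch meeting only $\Gamma$: there $u_\eps$ is harmonic with $u_\eps = 0$ on $\Gamma$ (Dirichlet), and $\la_\eps$ is harmonic with $\partial_\nu\la_\eps = \partial_\nu u_\eps$ on $\Gamma$. Using a cutoff $\zeta$ supported in the patch, $\zeta u_\eps$ solves a Dirichlet problem for $-\Delta + 1$ with right-hand side $(-\Delta\zeta + \zeta)u_\eps - 2\nabla\zeta\cdot\nabla u_\eps + \zeta f/(1+\eps) \in L^2$, so $\|\zeta u_\eps\|_{H^2} \le C(\|u_\eps\|_{H^1} + \|f\|_{L^2})$ by the $C^{1,1}$-domain Dirichlet estimate. (3) Still on a $\Gamma$-patch, now that $u_\eps \in H^2$ there, its trace $\partial_\nu u_\eps|_\Gamma \in H^{1/2}(\Gamma)$ with norm $\le C\|u_\eps\|_{H^2}$; then $\zeta\la_\eps$ solves a Neumann problem for $-\Delta+1$ with $L^2$ right-hand side and Neumann datum $\partial_\nu u_\eps|_\Gamma + (\text{commutator terms}) \in H^{1/2}$, giving $\|\zeta\la_\eps\|_{H^2} \le C(\|\la_\eps\|_{H^1} + \|f\|_{L^2} + \|u_\eps\|_{H^2})$. (4) On a patch meeting only $\tilde\Gamma$: symmetrically, $\la_\eps = 0$ on $\tilde\Gamma$ gives $\|\zeta\la_\eps\|_{H^2} \le C(\|\la_\eps\|_{H^1} + \|f\|_{L^2})$ by the Dirichlet estimate (the right-hand side of the $\la_\eps$ equation is $-\eps f/(1+\eps)$, which is $O(\|f\|_{L^2})$); then $\partial_\nu u_\eps = -\eps^{-1}\partial_\nu\la_\eps$ on $\tilde\Gamma$ has $H^{1/2}$-norm $\le C\eps^{-1}\|\la_\eps\|_{H^2}$, and the Neumann estimate for $\zeta u_\eps$ yields $\|\zeta u_\eps\|_{H^2} \le C(\|u_\eps\|_{H^1} + \|f\|_{L^2} + \eps^{-1}\|\la_\eps\|_{H^2})$. (5) Summing over the partition of unity and inserting the a priori bounds \eqref{estim1} (resp.\ \eqref{estim2}): from \eqref{estim1}, $\|\la_\eps\|_{H^2} \le C\|f\|_{L^2}$ on $\tilde\Gamma$-patches and $\Gamma$-patches feed $\sqrt\eps\|u_\eps\|_{H^2} \le C\|f\|$, then $\|u_\eps\|_{H^2} \le C(\|u_\eps\|_{H^1} + \|f\| + \eps^{-1}\|\la_\eps\|_{H^2}) \le C\eps^{-1}\|f\|$, whence $\eps\|u_\eps\|_{H^2} + \sqrt\eps\|\la_\eps\|_{H^2} \le C\|f\|_{L^2}$; and $\|\la_\eps\|_{H^2} \le C(\|\la_\eps\|_{H^1} + \|f\| + \|u_\eps\|_{H^2})$ only costs an extra power of $\eps^{-1}$, consistent with $\sqrt\eps\|\la_\eps\|_{H^2} \le C\|f\|$ once one is careful about which patch controls which term — I would chase the bookkeeping so the weaker of the two contributions wins. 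The compatible case is identical with \eqref{estim2} in place of \eqref{estim1}, yielding $\sqrt\eps\|u_\eps\|_{H^2} + \|\la_\eps\|_{H^2} \le C\|u\|_{H^1(\Delta,\Omega)}$.

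The main obstacle is purely the bookkeeping of the powers of $\eps$: the two boundary conditions couple $\partial_\nu u_\eps$ and $\partial_\nu\la_\eps$ asymmetrically on $\Gamma$ versus $\tilde\Gamma$ (one sees $\partial_\nu u_\eps = \partial_\nu\la_\eps$ on $\Gamma$ but $\eps\,\partial_\nu u_\eps = -\partial_\nu\la_\eps$ on $\tilde\Gamma$), so a naive estimate loses a factor $\eps^{-1}$ at each boundary step and would overshoot the claimed weights. The fix is to estimate $\la_\eps$ \emph{first} near $\tilde\Gamma$ (Dirichlet, no $\eps$ loss, and with the small $-\eps f$ forcing) and $u_\eps$ \emph{first} near $\Gamma$ (Dirichlet, no $\eps$ loss), so that each transfer of a Neumann datum across the interface picks up at most one factor $\eps^{-1}$ exactly where the a priori bound \eqref{estim1} already carries a compensating $\sqrt\eps$ or $\eps$. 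Since $\overline\Gamma\cap\overline{\tilde\Gamma}=\emptyset$, there is literally no patch where both transfers interact, so the losses do not compound, and the stated estimate follows. I would also note explicitly that the $C^{1,1}$ regularity of $\partial\Omega$ is exactly what is needed for the scalar Dirichlet and Neumann $H^2$-estimates (Theorem~2.2.2.3 and Theorem~2.3.3.6 in \cite{Gris85}, or \cite{brezis}), and that all constants depend only on the geometry, not on $\eps$.
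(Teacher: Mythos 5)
Your proposal is correct and follows essentially the same route as the paper: estimate $u_\eps$ first near $\Gamma$ and $\la_\eps$ first near $\tilde{\Gamma}$ by Dirichlet regularity, then transfer the Neumann data through the coupling conditions $\partial_\nu u_\eps=\partial_\nu\la_\eps$ on $\Gamma$ and $\eps\,\partial_\nu u_\eps=-\partial_\nu\la_\eps$ on $\tilde{\Gamma}$, and close with (\ref{estim1}) (resp.\ (\ref{estim2})); your $\eps$-bookkeeping matches the paper's. The only cosmetic difference is that the paper phrases the second step for each unknown as a single global mixed Dirichlet--Neumann problem (Dirichlet on one part of the boundary, $H^{1/2}$ Neumann data on the other, $L^2$ Laplacian) rather than a local Neumann problem on each patch of a partition of unity.
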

\begin{proof}
Given $\overline{\Gamma} \cap \overline{\tilde{\Gamma}}= \emptyset$, we may 
find two infinitely smooth functions $\zeta$ and $\tilde{\zeta}$ such that $(\zeta,\tilde{\zeta})=(1,0)$ in a vicinity of $\Gamma$ and
$(\zeta,\tilde{\zeta})=(0,1)$ in a vicinity of $\tilde{\Gamma}$.
We have from the first equation of (\ref{Pr_cauchy_simple_strong}),
\[-\Delta(\zeta u_\eps)=-\Delta \zeta u_\eps -2 \nabla \zeta\cdot \nabla u_\eps + \zeta \frac{f}{1+\eps}=F_\eps.\]
Since $u_\eps=0$ on $\Gamma$, from a standard regularity result for the Poisson equation with Dirichlet boundary condition we obtain 
\be \|\zeta u_\eps\|_{H^2(\Omega)} \leq C \|F_\eps\|_{L^2(\Omega)} \leq C(\|f\|_{L^2(\Omega)} + \|u_\eps\|_{H^1(\Omega)}),\label{F}\ee
and from (\ref{estim1}) we have
\[\sqrt{\eps} \|\zeta u_\eps\|_{H^2(\Omega)} \leq C\|f\|_{L^2(\Omega)}.\]
From a standard continuity result for the normal derivative and using $\partial_\nu u_\eps-\partial_\nu \la_\eps=0$ on $\Gamma$, we obtain 
\[\sqrt{\eps} \|\partial_\nu \la_\eps\|_{H^{1/2}(\Gamma)} =\sqrt{\eps} \|\partial_\nu u_\eps\|_{H^{1/2}(\Gamma)} \leq C \|f\|_{L^2(\Omega)}.\]
From the second equation of (\ref{Pr_cauchy_simple_strong}) we have
\[\|\Delta \la_\eps\|_{L^2(\Omega)} \leq C\eps\|f\|_{L^2(\Omega)}.\] 
Combining the two previous estimates with the fact that $\la_\eps=0$ on $\tilde{\Gamma}$ implies the regularity estimate 
\[\sqrt{\eps} \|\la_\eps\|_{H^2(\Omega)} \leq C \|f\|_{L^2(\Omega)}.\]
Reusing the second equation of (\ref{Pr_cauchy_simple_strong}), the estimate (\ref{estim1}) and that $\la_\eps=0$ on $\tilde{\Gamma}$ leads to
\[\|\tilde{\zeta} \la_\eps\|_{H^2(\Omega)} \leq C \|f\|_{L^2(\Omega)},\]
and using $\eps \partial_\nu u_\eps+\partial_\eps \la_\eps=0$ on $\tilde{\Gamma}$, we obtain 
\[\eps\|\partial_\nu u_\eps\|_{H^{1/2}(\tilde{\Gamma})} =\|\partial_\nu \la_\eps\|_{H^{1/2}(\tilde{\Gamma})} \leq C \|f\|_{L^2(\Omega)}.\]
We conclude that
\[\eps\|u_\eps\|_{H^2(\Omega)} \leq C \|f\|_{L^2(\Omega)}.\]
Now let us assume that $f$ is such that problem (\ref{cauchy_strong_simple}) has a solution $u$.
From (\ref{estim2}) and (\ref{F}) we now have
the better estimate
\[\|\zeta u_\eps\|_{H^2(\Omega)} \leq C\|u\|_{H^1(\Omega,\Delta)}.\]
Using $\partial_\nu u_\eps-\partial_\nu \la_\eps=0$ on $\Gamma$, we obtain 
\[\|\partial_\nu \la_\eps\|_{H^{1/2}(\Gamma)} \leq C \|u\|_{H^1(\Delta,\Omega)},\]
and then
\[\|\la_\eps\|_{H^2(\Omega)} \leq C \|u\|_{H^1(\Delta,\Omega)}.\]
Reusing the second equation of (\ref{Pr_cauchy_simple_strong}), the estimate (\ref{estim2}) and that $\la_\eps=0$ on $\tilde{\Gamma}$ leads to
\[\|\tilde{\zeta} \la_\eps\|_{H^2(\Omega)} \leq C \sqrt{\eps} \|u\|_{H^1(\Delta,\Omega)}.\]
Since $\eps \partial_\nu u_\eps+\partial_\eps \la_\eps=0$ on $\tilde{\Gamma}$, we obtain 
\[\sqrt{\eps}\|\partial_\nu u_\eps\|_{H^{1/2}(\tilde{\Gamma})} \leq C \|u\|_{H^1(\Delta,\Omega)}.\]
We conclude that $\sqrt{\eps}\|u_\eps\|_{H^2(\Omega)} \leq C \|u\|_{H^1(\Delta,\Omega)}$.
\end{proof}
\begin{remark}
From Theorem \ref{base} and Proposition \ref{interior}, we notice that in the interior of the domain, the $H^2$ estimates are the same as the $H^1$ estimates, whether the data are compatible or not.
However, from Theorem \ref{base} and Theorem \ref{global_smooth}, when it comes to the $H^2$ estimates in the whole domain, up to the boundary, one loses a $\sqrt{\eps}$ factor with respect to the $H^1$ estimates, whether the data are compatible or not.
\end{remark}
\section{The case of a polygonal domain}\label{SectionPolygonalDomain}
\subsection{Main result}

\begin{figure}[!ht]
\centering
\begin{tikzpicture}[scale=3]
\draw [draw=black,fill=gray!20] (-0.6,0.9)--(0,0.5)--(0.7,1)--(0.5,1.5)--(-0.2,1.4)--cycle;
\draw [->,domain=-35:46] plot ({-0.6+0.15*cos(\x)}, {0.9+0.15*sin(\x)});
\draw [draw=blue,line width=0.7mm] (-0.2,1.4)--(-0.6,0.9)--(0,0.5);
\node at (-0.35,0.9){ \small $\om_1$};
\node at (0.2,1.1){ $\Om$};
\node at (-0.72,0.9){ $S_1$};
\node at (0,0.4){ $S_2$};
\node at (0.8,0.9){ $S_3$};
\node at (-0.35,0.6){ $\Gamma$};
\node at (0.1,1.52){ $\tilde{\Gamma}$};

\end{tikzpicture}
\caption{An example of polygonal domain. $S_1$, $S_2$, $S_3$ represent the three types of vertices that we will study in \S\ref{CornerGamma}, \S\ref{CornerGammaGammaTilde}, \S\ref{CornerGammaTilde} respectively. \label{Polygone}
} 
\end{figure}
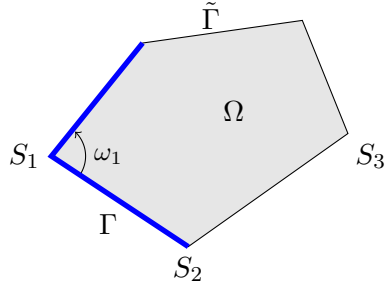

\noindent From now on, $\Omega$ is a polygonal domain in dimension 2. Our motivation is indeed to obtain error estimates in the context of the discretization with the help of a classical Finite Element Method: due to the meshing procedure in two dimensions, in practice the computational domain is often a polygon.
We use the same notations as in \cite{grisvard_bleu} to describe the geometry of such a polygon.
Let us assume that $\partial \Omega$ is the union of segments $\overline{\Gamma_j}$, $j=1,\dots,N$, where $N$ is an integer. Let us denote $S_j$ the vertex such that $S_j=\overline{\Gamma_j} \cap \overline{\Gamma_{j+1}}$, $\omega_j$ the angle between $\Gamma_j$
and $\Gamma_{j+1}$ from the interior of $\Omega$, $\tau_j$ the unit tangent oriented in the counter-clockwise sense and $\nu_j$ the outward normal to $\partial\Omega$.
We assume that $\Gamma$ and $\tilde{\Gamma}$ are formed by a finite number of edges, namely $n$ and $\tilde{n}$, respectively, with $n+\tilde{n}=N$.
Let us denote $\mathcal{H}(\Gamma)$ the subset of functions $(g_0,g_1) \in L^2(\Gamma) \times L^2(\Gamma)$ such that 
$(f_j,g_j):=(g_0|_{\Gamma_j},g_1|_{\Gamma_j}) \in H^{3/2}(\Gamma_j) \times H^{1/2}(\Gamma_j)$, $j=1,\dots,n$,
with the following compatibility conditions at $S_j$:
\be
\left\{
\begin{array}{lll}
f_j(S_j) &= & f_{j+1}(S_j)\\[4pt]
\partial_{\tau_j}f_j &\equiv& -\cos(\omega_j) \partial_{\tau_{j+1}}f_{j+1} + \sin(\omega_j) g_{j+1} \quad \text{at} \quad S_j \\[4pt]
 g_j &\equiv& -\sin(\omega_j) \partial_{\tau_{j+1}}f_{j+1} -\cos(\omega_j) g_{j+1} \quad \text{at} \quad S_j,
\end{array}
\right.
\ee
and the equivalence $\phi_j\equiv \phi_{j+1}$ at $S_j$ means that for small $\delta>0$
\[\int_0^{\delta} \frac{|\phi_j(x_j(-\sigma))-\phi_{j+1}(x_j(+\sigma))|^2}{\sigma}\,d\sigma <+\infty,\]
where $x_j(\sigma)$ denotes the point of $\partial \Omega$ which, for small enough $|\sigma|$ (say $|\sigma| \leq \delta$), is at distance $\sigma$ (counted algebraically) of $S_j$ along $\partial \Omega$. More precisely, $x_j(\sigma) \in \Gamma_j$ if $\sigma<0$ and $x_j(\sigma) \in \Gamma_{j+1}$
if $\sigma>0$.
It is proved in \cite{grisvard_bleu}, that for $(g_0,g_1) \in \mathcal{H}$, there exists a function $U \in H^2(\Omega)$ such that for each $j=1,\dots,n$, $(U|_{\Gamma_j},\partial_{\nu_j} U|_{\Gamma_j})=(f_j,g_j)$ and even a continuous lifting $(g_0,g_1) \mapsto U$ from $\mathcal{H}$ to $H^2(\Omega)$. We are hence again in the framework of section \ref{introduction}, where the problem to solve is (\ref{cauchy_strong_simple}).\\
\newline
Clearly, the interior estimates given by Proposition \ref{interior} are true in the polygonal domain since they are independent of the regularity of the boundary.
Let us now analyze the regularity up to the boundary.
As done in \cite{grisvard_bleu}, the estimates are obtained by using a partition of unity, which enables us to localize our analysis in three different types of corners (see Figure \ref{Polygone}): 
\begin{itemize}
\item regularity at a corner delimited by two edges which belong to $\Gamma$, called a corner of type $\Gamma$,
\item regularity at a corner delimited by two edges which belong to $\tilde{\Gamma}$, called a corner of type $\tilde{\Gamma}$,
\item regularity at a corner delimited by one edge which belongs to $\Gamma$ and one edge which belongs to $\tilde{\Gamma}$, called a corner of mixed type.
\end{itemize}
Let us denote by $N_C$ the set of $j$ such that $S_j$ is either a vertex of type $\Gamma$ or a vertex of type $\tilde{\Gamma}$ and $N_M$ the set of $j$ such that $S_j$ is a corner of mixed type.
We wish to prove the following theorem, which is obtained by gathering Propositions \ref{interior}, \ref{Gamma}, \ref{Gamma_tilde} and \ref{mixed} hereafter.
\begin{theorem}
\label{main}
Let us take $s_C<\min_{j \in N_C}(1+\pi/\omega_j)$ if there exists $j \in N_C$ such that $\omega_j>\pi$ and $s_C=2$ otherwise.
Let us take $s_M<\min_{j \in N_M}(1+\pi/(2\omega_j))$ if there exists $j \in N_M$ such that $\omega_j \geq \pi/2$ and $s_M=2$ otherwise.
Let us denote $s=\min(s_C,s_M)$.\\
\newline
For $f \in L^2(\Omega)$ and $\eps>0$, the solution $(u_\eps,\la_\eps) \in V_{0} \times \tilde{V}_0$ to the problem (\ref{Pr_cauchy_simple}) is such that $u_\eps$ and $\la_\eps$ belong to $H^{s}(\Omega)$ and there exists a constant $C>0$ which depends only on the geometry such that
\[\forall \eps \in (0,1], \quad \eps\|u_\eps\|_{H^{s}(\Omega)} + \sqrt{\eps}\|\la_\eps\|_{H^{s}(\Omega)} \leq C \|f\|_{L^2(\Omega)}.\]
If in addition we assume that $f$ is such that problem (\ref{cauchy_strong_simple}) has a (unique) solution $u$, then
\[\forall \eps \in (0,1], \quad \sqrt{\eps}\|u_\eps\|_{H^{s}(\Omega)} + \|\la_\eps\|_{H^{s}(\Omega)} \leq C \|u\|_{H^1(\Delta,\Omega)}.\]
\end{theorem}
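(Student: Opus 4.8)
The plan is to combine the local estimates via a partition of unity, as announced just before the statement. First I would fix, once and for all and \emph{independently of} $\eps$, a finite open cover $\{\mathcal{O}_k\}$ of $\overline{\Omega}$ together with a subordinate partition of unity $\{\chi_k\}\subset \mathscr{C}^{\infty}(\overline{\Omega})$ with $\sum_k\chi_k\equiv 1$, arranged so that each $\mathcal{O}_k$ is of exactly one of three kinds: (i) $\overline{\mathcal{O}_k}\subset\Omega$; (ii) $\overline{\mathcal{O}_k}$ meets $\partial\Omega$ only in a single open edge $\Gamma_j$ and contains no vertex; (iii) $\overline{\mathcal{O}_k}$ contains exactly one vertex $S_j$. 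This is possible since the edges and vertices are finite in number. Then I would write $u_\eps=\sum_k\chi_k u_\eps$, $\la_\eps=\sum_k\chi_k\la_\eps$, use $\|u_\eps\|_{H^{s}(\Omega)}\le\sum_k\|\chi_k u_\eps\|_{H^{s}(\Omega)}$ and the analogous bound for $\la_\eps$, and estimate each of the finitely many terms separately. The fact that the $\chi_k$ are fixed smooth functions, hence multiplication by $\chi_k$ is bounded on $H^{t}(\Omega)$ for every $t\ge 0$ with an $\eps$-independent norm, is precisely what lets us keep track of the powers of $\eps$ throughout.

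On patches of type (i) I would invoke Proposition \ref{interior}, which gives $\chi_k u_\eps,\chi_k\la_\eps\in H^2(\Omega)$ with an estimate that is in fact a factor $\sqrt{\eps}$ better than the one claimed; since $\eps\le 1$ this is harmless. On patches of type (ii) near an edge of $\Gamma$, the condition $u_\eps|_{\Gamma}=0$ and interior elliptic regularity for the Dirichlet Laplacian (the domain being locally a half-plane) give $\chi_k u_\eps\in H^2$; then $\partial_\nu u_\eps\in H^{1/2}$ on that edge, and via $\partial_\nu u_\eps-\partial_\nu\la_\eps=0$ together with $\Delta\la_\eps\in L^2(\Omega)$ one gets $\chi_k\la_\eps\in H^2$ — this is exactly the mechanism used in the proof of Theorem \ref{global_smooth}. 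Near an edge of $\tilde\Gamma$ the same argument, now based on $\la_\eps|_{\tilde\Gamma}=0$ and $\eps\,\partial_\nu u_\eps+\partial_\nu\la_\eps=0$, produces the expected extra factor $\sqrt{\eps}$ (resp. $\eps$) in front of $\|\chi_k u_\eps\|_{H^2}$, consistent with the claimed $\eps$-powers. On patches of type (iii) I would invoke Proposition \ref{Gamma} at a corner of type $\Gamma$, Proposition \ref{Gamma_tilde} at a corner of type $\tilde\Gamma$, and Proposition \ref{mixed} at a corner of mixed type; each of these is designed to provide $\chi_k u_\eps,\chi_k\la_\eps$ in $H^{s_C}(\Omega)$ (resp. $H^{s_M}(\Omega)$) with exactly the $\eps$-powers appearing in the theorem. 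Finally, taking $s=\min(s_C,s_M)\le 2$ and using the continuous embeddings $H^{s_C}(\Omega),H^{s_M}(\Omega),H^2(\Omega)\hookrightarrow H^{s}(\Omega)$ with constants depending only on $\Omega$, every local contribution is bounded by the right-hand side of the asserted inequality; summing over the finitely many $k$ yields the non-compatible estimate (the local bounds being fed by (\ref{estim1})) and, when $f$ is compatible, the sharper one (fed by (\ref{estim2})).

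The only genuinely delicate point is hidden in Proposition \ref{mixed}: at a corner of mixed type $u_\eps$ and $\la_\eps$ cannot be decoupled, the boundary relations mix a Dirichlet-type and a Neumann-type condition between the two functions, and the limit ($\eps=0$) problem is ill-posed, so the classical Grisvard decomposition is unavailable and one must pass through the Kondratiev weighted-space analysis of Section \ref{CornerGammaGammaTilde}; this is what produces the exponent $1+\pi/(2\omega_j)$ rather than $1+\pi/\omega_j$ at mixed corners and what requires careful monitoring of $\eps$ there. Once Proposition \ref{mixed} (and Propositions \ref{Gamma}, \ref{Gamma_tilde}) are granted, the assembly above is routine: the partition of unity is fixed, the Sobolev embeddings are uniform, and the worst local smoothness index $s=\min(s_C,s_M)$ together with the worst $\eps$-powers $(\eps,\sqrt{\eps})$ resp. $(\sqrt{\eps},1)$ dictate the global statement.
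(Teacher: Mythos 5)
Your proposal is correct and is essentially the paper's own argument: the paper proves Theorem \ref{main} precisely by gathering the local estimates of Propositions \ref{interior}, \ref{Gamma}, \ref{Gamma_tilde} and \ref{mixed} through an $\eps$-independent partition of unity, noting that each local estimate is at least as good as the global one. Your explicit treatment of the edge patches (type (ii)) via the mechanism of Theorem \ref{global_smooth} fills in a step the paper leaves implicit, and your bookkeeping of the $\eps$-powers and Sobolev embeddings matches the stated conclusion.
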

\begin{remark}
The global estimates of Theorem \ref{main} are obtained by gathering all the local estimates obtained in Propositions \ref{interior}, \ref{Gamma}, \ref{Gamma_tilde} and \ref{mixed}. Each of these estimates are locally better than the global estimate of Theorem \ref{main}.
\end{remark}
\subsection{Regularity at a corner of type $\Gamma$}\label{CornerGamma}
The regularity of solutions $u_\eps$ and $\la_\eps$ near a corner delimited by two edges which belong to $\Gamma$ can be analyzed separately.
They will be obtained by directly applying the results of \cite{grisvard_bleu} for Dirichlet and Neumann Laplacian problems.
Let us consider $S_j$ the vertex of a corner delimited by two edges $\Gamma_j$ and $\Gamma_{j+1}$ which belong to $\Gamma$. 
Let us denote $(r_j,\theta_j)$ the local polar coordinates with respect to the point $S_j$ and $\zeta_j \in \mathscr{C}^\infty(\overline{\Omega})$ a radial function (depending only on $r_j$) such that $\zeta_j=1$ for $r_j \leq a_j$ and $\zeta_j=0$ for $r_j \geq b_j$. We assume that $b_j$ is chosen such that $\zeta_j=0$ in a vicinity of all edges $\Gamma_k$ except for $k=j$ or $k=j+1$. 
In order to simplify notations, we skip the reference to index $j$, denoting in particular $S_j=S$, $\Gamma_j=\Gamma_0$ and $\Gamma_{j+1}=\Gamma_\omega$.
Let us introduce the finite cone $K_b=\Omega \cap B(S,b)$.
The two following lemmata are proved in \cite{grisvard_bleu}.
\begin{lemma}
\label{Dirichlet}
For $F \in L^2(K_b)$, the problem: find $U \in H^1(K_b)$ such that
\be \left\{
\begin{array}{rcll}
 -\Delta U  &=& F&  \mbox{in }K_b\\
 U&=&0  & \mbox{on }\partial K_b
\end{array}
\right.
\label{dirichlet}
\ee
has a unique solution and there exists a unique constant $c \in \mathbb{R}$ and a unique function $V \in H^2(K_b)$
such that
\[U= c\, r^{\pi/\omega} \sin\left(\frac{\pi \theta}{\omega}\right) + V.\]
Moreover, there exists a constant $C>0$ such that
\[|c| + \|V\|_{H^2(K_b)} \leq C\, \|F\|_{L^2(K_b)} \]
In addition, if $\omega\le\pi$ then $c=0$.
\end{lemma}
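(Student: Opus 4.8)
The plan is to reduce this statement to the classical Kondratiev/Grisvard theory for the Dirichlet Laplacian in a plane sector. First I would note that the operator $-\Delta$ with Dirichlet conditions on $\partial K_b$ is an isomorphism from $H^1_0(K_b)$ onto $L^2(K_b)$ (via Lax--Milgram, using Poincar\'e on the bounded Lipschitz set $K_b$), which gives existence, uniqueness, and the a priori bound $\|U\|_{H^1(K_b)}\le C\|F\|_{L^2(K_b)}$. The content of the lemma is the finer decomposition near the vertex $S$. The singular exponents of the Dirichlet Laplacian in the infinite sector of opening $\omega$ are the $\lambda_m=m\pi/\omega$, $m\ge 1$, with associated angular functions $\sin(m\pi\theta/\omega)$. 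The only exponent lying in the strip $0<\lambda<1$ (equivalently, producing a singularity outside $H^2$) is $\lambda_1=\pi/\omega$, and this occurs precisely when $\omega>\pi$; when $\omega\le\pi$ we have $\pi/\omega\ge 1$ and no such singular term is present, so $U\in H^2(K_b)$, i.e. $c=0$. This is exactly the dichotomy stated.

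The key steps, in order, would be: (i) localize with the cutoff $\zeta$ (equal to $1$ near $S$, supported away from the other vertices and the ``far'' boundary arc $\Omega\cap\partial B(S,b)$) so that $\zeta U$ solves a Dirichlet problem in the model sector with right-hand side $\zeta F - 2\nabla\zeta\cdot\nabla U - U\Delta\zeta\in L^2$; away from $S$ interior and standard boundary elliptic regularity already give $H^2$. (ii) Apply the Kondratiev shift theorem / Grisvard's decomposition (Theorem 4.4.3.7 and Section 8.2 of \cite{grisvard_bleu}, or \cite{Kond67}): since there is no singular exponent on the critical line $\mathrm{Re}\,\lambda=1$ (the smallest positive exponent is $\pi/\omega\ne 1$ for $\omega\ne\pi$; the borderline $\omega=\pi$ is the trivial half-plane case where $U\in H^2$ directly), the solution splits as a linear combination of the singular functions $r^{\lambda_m}\sin(\lambda_m\theta)$ with $0<\lambda_m<1$ plus an $H^2$ remainder. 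Only $m=1$ contributes, giving $U=c\,r^{\pi/\omega}\sin(\pi\theta/\omega)+V$ with $V\in H^2(K_b)$. (iii) Extract the estimate $|c|+\|V\|_{H^2(K_b)}\le C\|F\|_{L^2(K_b)}$ from the closed graph theorem applied to the (bounded, bijective) map $F\mapsto(c,V)$ between the relevant Banach spaces, or directly from the norm estimates in \cite{grisvard_bleu}; uniqueness of the pair $(c,V)$ follows because $r^{\pi/\omega}\sin(\pi\theta/\omega)\notin H^2(K_b)$ when $\omega>\pi$.

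Since this lemma is quoted directly from \cite{grisvard_bleu}, the ``main obstacle'' is essentially bookkeeping rather than genuine difficulty: one must be a little careful that $K_b=\Omega\cap B(S,b)$ is a genuine finite sector (the hypothesis on $b$ guarantees $\partial B(S,b)$ meets no other vertex and $\zeta$ vanishes near all edges except $\Gamma_0$ and $\Gamma_\omega$), and that the boundary of $K_b$ is only Lipschitz (it has the extra corners where the arc meets the two straight edges, with right angles $\pi/2<\pi$, hence harmless and producing only $H^2$ contributions there). One should also check there is never an exponent exactly equal to $1$ unless $\omega=\pi$, so that no logarithmic term $r\log r$ appears. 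Modulo these routine verifications, the decomposition and estimate are the standard Kondratiev result.
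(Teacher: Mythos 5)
The paper does not prove this lemma at all --- it is quoted directly from \cite{grisvard_bleu} --- and your sketch is a correct reconstruction of the standard Grisvard/Kondratiev argument behind that citation: Lax--Milgram for well-posedness, localization at the vertex, the singular exponents $m\pi/\omega$ with only $m=1$ falling in the strip $(0,1)$ when $\omega>\pi$, and the closed-graph/continuity argument for the estimate. The only cosmetic caveat is that when $\omega\le\pi$ the function $r^{\pi/\omega}\sin(\pi\theta/\omega)$ itself lies in $H^2(K_b)$, so "uniqueness" of $(c,V)$ in that case is a matter of the convention $c=0$ rather than a consequence of the singular function being non-$H^2$; this is an imprecision already present in the statement, not a gap in your argument.
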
 
\begin{lemma}
\label{Neumann}
For $F \in L^2(K_b)$, the problem: find $U \in H^1(K_b)$ such that
\be \left\{
\begin{array}{rcll}
 -\Delta U  &=& F&  \mbox{in }K_b\\
U&=&0 & \mbox{on }\partial B(S,b) \cap \partial K_b \\
 \partial_{\nu} U&=&0  & \mbox{on } (\Gamma_0 \cup \Gamma_{\omega}) \cap \partial K_b
\end{array}
\right.
\label{neumann}
\ee
has a unique solution and there exists a unique constant $c \in \mathbb{R}$ and a unique function $V \in H^2(K_b)$
such that
\[ U =c\,r^{\pi/\omega} \cos\left(\frac{\pi \theta}{\omega}\right) + V.\]
Moreover, there exists a constant $C>0$ such that
\[|c| + \|V\|_{H^2(K_b)} \leq C\, \|F\|_{L^2(K_b)} \]
In addition, if $\omega\le\pi$ then $c=0$.
\end{lemma}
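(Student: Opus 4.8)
The plan is to follow the classical Kondratiev analysis of the Laplacian in a plane sector (as in \cite{grisvard_bleu}). First I would recast problem (\ref{neumann}) weakly on the closed subspace $\mathcal{V}$ of $H^1(K_b)$ consisting of the functions that vanish on the arc $\partial B(S,b)\cap\partial K_b$: find $U\in\mathcal{V}$ with $\int_{K_b}\nabla U\cdot\nabla v\,dx=\int_{K_b}Fv\,dx$ for all $v\in\mathcal{V}$, the homogeneous Neumann condition on $\Gamma_0\cup\Gamma_\omega$ being recovered naturally. Since the Dirichlet part of the boundary has positive measure, a Poincar\'e inequality makes $v\mapsto\|\nabla v\|_{L^2(K_b)}$ a norm on $\mathcal{V}$ equivalent to the $H^1$-norm, so the form is coercive and bounded; Lax--Milgram then gives existence, uniqueness, and $\|U\|_{H^1(K_b)}\le C\|F\|_{L^2(K_b)}$.

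\noindent\textbf{Regularity away from the vertex.} Next I would localise with a radial cut-off $\chi$ equal to $1$ near $S$ and supported away from the arc. On $\mrm{supp}(1-\chi)$ the boundary of $K_b$ is smooth up to the two arc--edge corners, which have opening $\pi/2$ with mixed Dirichlet--Neumann data and are therefore harmless for $H^2$ regularity (the associated exponents are the integers $1,3,\dots$, whose singular functions are smooth). Hence interior and boundary elliptic regularity give $(1-\chi)U\in H^2(K_b)$, with the estimate by $\|F\|_{L^2(K_b)}$, and the only possible obstruction to $U\in H^2(K_b)$ is the vertex $S$.

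\noindent\textbf{Analysis near the vertex.} The function $\chi U$ solves $-\Delta(\chi U)=\chi F-2\nabla\chi\cdot\nabla U-(\Delta\chi)U=:\widetilde F\in L^2$ on the infinite sector $\{(r,\theta):r>0,\ 0<\theta<\omega\}$ with homogeneous Neumann conditions on the two rays, and $\chi U$ has compact support. I would then apply the Mellin transform in $r$ (equivalently $r=e^{-t}$ and the Fourier transform in $t$). The angular operator $-\partial_\theta^2$ on $(0,\omega)$ with Neumann conditions at $\theta=0,\omega$ has eigenpairs $\big((k\pi/\omega)^2,\cos(k\pi\theta/\omega)\big)$, $k\ge 0$, so the Mellin symbol of $-\Delta$ is invertible off the exponents $\lambda_k=k\pi/\omega$. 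Shifting the inversion contour from the line encoding $H^1$-regularity ($\mrm{Re}\,\lambda=0$) to the one encoding $H^2$-regularity ($\mrm{Re}\,\lambda=1$), one crosses the poles lying strictly in between. If $\omega>\pi$ the only such pole is the simple one at $\lambda_1=\pi/\omega\in(0,1)$, whose residue produces the term $c\,r^{\pi/\omega}\cos(\pi\theta/\omega)$, while the shifted integral represents a remainder lying in $H^2$ near $S$. Setting $V=U-c\,r^{\pi/\omega}\cos(\pi\theta/\omega)$ and recombining with the $H^2$ bound on $(1-\chi)U$ (the difference involving $\chi$ being smooth and supported away from $S$) gives $V\in H^2(K_b)$; continuity of the Mellin resolvent on the shifted line together with the closed graph theorem yields $|c|+\|V\|_{H^2(K_b)}\le C\|F\|_{L^2(K_b)}$. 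Uniqueness of the pair $(c,V)$ follows since $r^{\pi/\omega}\cos(\pi\theta/\omega)\notin H^2(K_b)$ when $\omega>\pi$.

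\noindent\textbf{The case $\omega\le\pi$ and the main difficulty.} If $\omega\le\pi$ then $\pi/\omega\ge 1$, so no pole with real part $<1$ is crossed; and when $\pi/\omega$ is a positive integer the would-be singular function is a harmonic polynomial and no logarithmic term appears (the exponent lies in the regular range and $F$ is only assumed $L^2$). Hence $c=0$ and $U=V\in H^2(K_b)$. I expect the main obstacle to be precisely this corner step: justifying the contour shift, showing that the remainder lands in the \emph{unweighted} space $H^2(K_b)$ rather than merely in a weighted Sobolev space, excluding logarithms at integer exponents, and proving the continuity of $F\mapsto(c,V)$. Everything else --- Lax--Milgram and elliptic regularity away from $S$ --- is routine.
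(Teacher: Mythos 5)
Your plan is sound and would yield the lemma. Note, however, that the paper does not prove this statement at all: it is quoted verbatim from \cite{grisvard_bleu}, where Grisvard obtains it by a functional-analytic argument (showing that the Laplacian acting on the $H^2$ functions satisfying the boundary conditions is injective with closed range, and identifying the one-dimensional cokernel by integrating by parts against the dual singular function $r^{-\pi/\omega}\cos(\pi\theta/\omega)$), rather than by the Mellin/Kondratiev contour-shifting you propose. Your route is nevertheless entirely standard and is in fact the same machinery the authors themselves deploy in Section 5 for the mixed corner, so methodologically it fits the paper well; what Grisvard's approach buys is that one never has to discuss weighted spaces or the location of Mellin contours, while yours generalizes more readily (as the paper's Section 5 illustrates).

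One step of your sketch needs repair before it is a proof. For the Neumann angular operator the value $\lambda=0$ is an eigenvalue of the Mellin symbol (constant eigenfunction), and it is a \emph{double} pole of the resolvent since the exponents come in pairs $\pm k\pi/\omega$; so the line $\mathrm{Re}\,\lambda=0$, which you take as the starting contour "encoding $H^1$-regularity", actually passes through a pole and cannot be used. The standard fix is to observe that $\chi U$, being a compactly supported $H^1$ function, belongs to $V^1_\beta(K)$ for every $\beta>0$ but in general not for $\beta=0$, so the correct starting contour is $\mathrm{Re}\,\lambda=\delta$ for small $\delta>0$; one then crosses only the simple pole at $\pi/\omega$ (when $\omega>\pi$), and neither the constant mode nor a $\log r$ term (which would come from the double pole at $0$ and is excluded anyway because $\log r\notin H^1$ in 2D) enters the decomposition. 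With that adjustment, and a word to justify that the remainder lands in $V^2_0(K_b)\subset H^2(K_b)$ rather than only in a weighted space, your argument closes; the rest (Lax--Milgram, the $\pi/2$ arc--edge corners, uniqueness of $(c,V)$ from $r^{\pi/\omega}\cos(\pi\theta/\omega)\notin H^2$ when $\omega>\pi$, and $c=0$ for $\omega\le\pi$) is correct as written.
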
 
\begin{proposition}
\label{Gamma}
Assume that $S$ is the vertex of a corner of type $\Gamma$.
Let us consider $s <1+\pi/\omega$ if $\omega>\pi$ and $s=2$ otherwise. 
For $f \in L^2(\Omega)$, the solution $(u_\eps,\la_\eps) \in V_{0} \times \tilde{V}_0$ to the problem (\ref{Pr_cauchy_simple}) is such that $\zeta u_\eps$ and $\zeta \la_\eps$ belong to $H^{s}(\Omega)$ and there exists a constant $C>0$ which depends only on the geometry such that
\[\forall \eps \in (0,1], \quad \sqrt{\eps}(\|\zeta u_\eps\|_{H^{s}(\Omega)} + \|\zeta \la_\eps\|_{H^{s}(\Omega)}) \leq C \|f\|_{L^2(\Omega)}.\]
If in addition $f$ is such that problem (\ref{cauchy_strong_simple}) has a solution $u$, then
\[\forall \eps \in (0,1], \quad \|\zeta u_\eps\|_{H^{s}(\Omega)} + \|\zeta \la_\eps\|_{H^{s}(\Omega)} \leq C \|u\|_{H^1(\Delta,\Omega)}.\]
\end{proposition}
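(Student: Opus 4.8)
The plan is to decouple the system near the vertex $S$ and to reduce the analysis to the two scalar model problems (\ref{dirichlet}) and (\ref{neumann}). The starting observation is that, near a corner of type $\Gamma$, the two unknowns genuinely separate: since both edges meeting at $S$ belong to $\Gamma$, the condition $u_\eps=0$ on $\Gamma$ together with $-(1+\eps)\Delta u_\eps=f$ shows that $u_\eps$ solves a homogeneous Dirichlet Laplace problem there, while the coupling condition $\partial_\nu u_\eps-\partial_\nu\la_\eps=0$ on $\Gamma$ says precisely that $w_\eps:=\la_\eps-u_\eps$ has vanishing normal derivative on both edges. Since moreover $-(1+\eps)\Delta w_\eps=-\eps f-f$, i.e. $-\Delta w_\eps=-f$, the function $w_\eps$ solves a homogeneous Neumann Laplace problem near $S$, and $\la_\eps=u_\eps+w_\eps$.

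Next I would localize with the radial cutoff $\zeta$. Because $\zeta$ depends only on $r$, its normal derivative vanishes on the straight edges $\Gamma_0,\Gamma_\omega$, so multiplying by $\zeta$ introduces no boundary term there; and $\zeta$ vanishes near the arc $\partial B(S,b)\cap\partial K_b$. Hence $\zeta u_\eps$ solves (\ref{dirichlet}) on $K_b$ with right-hand side $\tilde F_\eps:=\zeta f/(1+\eps)-2\nabla\zeta\cdot\nabla u_\eps-(\Delta\zeta)u_\eps\in L^2(K_b)$, and $\zeta w_\eps$ solves (\ref{neumann}) on $K_b$ with right-hand side $F_\eps:=-\zeta f-2\nabla\zeta\cdot\nabla w_\eps-(\Delta\zeta)w_\eps\in L^2(K_b)$. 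Lemmas \ref{Dirichlet} and \ref{Neumann} then provide constants $c,c'\in\R$ (with $c=c'=0$ when $\omega\le\pi$) and functions $V,V'\in H^2(K_b)$ such that $\zeta u_\eps=c'\,r^{\pi/\omega}\sin(\pi\theta/\omega)+V'$, $\zeta w_\eps=c\,r^{\pi/\omega}\cos(\pi\theta/\omega)+V$, together with $|c'|+\|V'\|_{H^2(K_b)}\le C\|\tilde F_\eps\|_{L^2(K_b)}$ and $|c|+\|V\|_{H^2(K_b)}\le C\|F_\eps\|_{L^2(K_b)}$. Re-absorbing the cutoff (the functions $\zeta u_\eps$ and $\zeta\la_\eps=\zeta u_\eps+\zeta w_\eps$ are supported in $\overline{K_b}$ and vanish with all their derivatives near $r=b$) one rewrites these as global decompositions on $\Omega$: $\zeta u_\eps$ is $c'$ times $\zeta r^{\pi/\omega}\sin(\pi\theta/\omega)$ plus an $H^2(\Omega)$ term, and $\zeta\la_\eps$ is a combination of $\zeta r^{\pi/\omega}\sin(\pi\theta/\omega)$ and $\zeta r^{\pi/\omega}\cos(\pi\theta/\omega)$ plus an $H^2(\Omega)$ term. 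Since $\zeta r^{\pi/\omega}\sin(\pi\theta/\omega)$ and $\zeta r^{\pi/\omega}\cos(\pi\theta/\omega)$ belong to $H^s(\Omega)$ exactly for $s<1+\pi/\omega$ (and are absent when $\omega\le\pi$, in which case everything lies in $H^2(\Omega)$), this yields $\zeta u_\eps,\zeta\la_\eps\in H^s(\Omega)$ with $\|\zeta u_\eps\|_{H^s(\Omega)}\le C\|\tilde F_\eps\|_{L^2(K_b)}$ and $\|\zeta\la_\eps\|_{H^s(\Omega)}\le C(\|\tilde F_\eps\|_{L^2(K_b)}+\|F_\eps\|_{L^2(K_b)})$.

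Finally I would feed in the a priori bounds of Theorem \ref{base_simple}. One has $\|\tilde F_\eps\|_{L^2(K_b)}\le C(\|f\|_{L^2(\Omega)}+\|u_\eps\|_{H^1(\Omega)})$ and $\|F_\eps\|_{L^2(K_b)}\le C(\|f\|_{L^2(\Omega)}+\|u_\eps\|_{H^1(\Omega)}+\|\la_\eps\|_{H^1(\Omega)})$. Using (\ref{estim1}), which gives $\|u_\eps\|_{H^1(\Omega)}\le C\|f\|_{L^2(\Omega)}/\sqrt\eps$ and $\|\la_\eps\|_{H^1(\Omega)}\le C\|f\|_{L^2(\Omega)}$, both right-hand sides are bounded by $C\|f\|_{L^2(\Omega)}/\sqrt\eps$, which gives the first estimate after multiplying by $\sqrt\eps$. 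In the compatible case, (\ref{estim2}) gives instead $\|u_\eps\|_{H^1(\Omega)}\le C\|u\|_{H^1(\Omega)}$ and $\|\la_\eps\|_{H^1(\Omega)}\le C\sqrt\eps\,\|u\|_{H^1(\Omega)}\le C\|u\|_{H^1(\Omega)}$, and since here $\|f\|_{L^2(\Omega)}=\|\Delta u\|_{L^2(\Omega)}\le\|u\|_{H^1(\Delta,\Omega)}$, both norms are bounded by $C\|u\|_{H^1(\Delta,\Omega)}$, which is the second estimate.

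The commutator computations and the gluing of the local decompositions into global ones are routine. The one step that is not completely obvious, and which is the crux of the argument, is the decoupling: working with $\la_\eps-u_\eps$ rather than with $\la_\eps$ directly converts the coupling condition on $\Gamma$ into a homogeneous Neumann condition, which is what makes Lemma \ref{Neumann} applicable and keeps the right-hand side in $L^2$. Beyond that, the only thing to watch is the asymmetry of the two $H^1$ a priori estimates — $\|u_\eps\|_{H^1}$ of size $\eps^{-1/2}$ versus $\|\la_\eps\|_{H^1}$ of size $1$ — which is exactly why a single factor $\sqrt\eps$ appears in front of both $\|\zeta u_\eps\|_{H^s}$ and $\|\zeta\la_\eps\|_{H^s}$ in the incompatible case.
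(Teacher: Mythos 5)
Your proof is correct and follows essentially the same route as the paper's: localize with the radial cutoff, treat $\zeta u_\eps$ via the Dirichlet model problem (Lemma \ref{Dirichlet}), and decouple $\la_\eps$ by passing to the difference $\la_\eps-u_\eps$ (the paper uses $d_\eps=u_\eps-\la_\eps$, the same function up to sign), which satisfies a homogeneous Neumann condition on $\Gamma$ and is handled by Lemma \ref{Neumann}. The bookkeeping of the $\eps$-powers via (\ref{estim1}) and (\ref{estim2}) also matches the paper's argument.
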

\begin{proof}
From (\ref{Pr_cauchy_simple_strong}) we have that $\zeta u_\eps$ satisfies
problem (\ref{dirichlet}) with
\begin{equation}\label{DefSourceTerm}
F_{\eps}=-\Delta \zeta u_\eps-2\nabla \zeta\cdot \nabla u_\eps + \zeta \frac{f}{1+\eps}.
\end{equation}
By using Lemma \ref{Dirichlet}, we have that
there exists a unique constant $c_\eps\in \mathbb{R}$ and a unique function $V_\eps\in H^2(K_b)$
such that
\[\zeta(r) u_\eps=c_{\eps}\,r^{\pi/\omega} \sin\left(\frac{\pi \theta}{\omega}\right) + V_{\eps}\]
and there exists a constant $C>0$ such that
\[|c_\eps| + \|V_\eps\|_{H^2(K_b)} \leq C\, \|F_{\eps}\|_{L^2(K_b)}.
\]
From (\ref{DefSourceTerm}), we deduce that we have 
\[|c_\eps| + \|V_\eps\|_{H^2(\Omega)} \leq C(\|u_\eps\|_{H^1(K_b)} + \|f\|_{L^2(\Omega)}).\]
From \cite[Theorem 1.4.5.3]{Gris85}, the function $(r,\theta) \mapsto \zeta(r) r^{\pi/\omega} \sin (\pi \theta/\omega)$ belongs to $H^s(\Omega)$ for any $s<1+\pi/\omega$.
We conclude from (\ref{estim1}) that there exists a constant $C>0$ which depends only on the geometry such that, for $s<1+\pi/\omega$,
\[
\left\{
\begin{array}{cl}
\sqrt{\eps}\|\zeta u_\eps\|_{H^2(\Omega)} \leq C\, \|f\|_{L^2(\Omega)}&  \quad\mbox{if } \omega<\pi\\[4pt]
\sqrt{\eps}\|\zeta u_\eps\|_{H^{s}(\Omega)} \leq C\, \|f\|_{L^2(\Omega)}&  \quad\mbox{if } \omega> \pi.
\end{array}
\right.
\]
We remark from (\ref{Pr_cauchy_simple_strong}) that the function $d_\eps=u_\eps-\la_\eps$ satisfies $-\Delta d_\eps=f$ in $\Omega$ and
$\partial_\nu d_\eps=0$ on $\Gamma$, which implies that 
$\zeta d_\eps$ satisfies
problem (\ref{neumann}) with
\[F_{\eps}=-\Delta \zeta d_\eps-2\nabla \zeta\cdot \nabla d_\eps + \zeta f.\]
By using Lemma \ref{Neumann}, we have that
there exists a unique constant $c_\eps \in \mathbb{R}$ and a unique function $V_\eps \in H^2(K_b)$
such that
\[\zeta d_\eps= c\, r^{\pi/\omega} \cos\left(\frac{\pi \theta}{\omega}\right) + V_{\eps}\]
and there exists a constant $C>0$ such that
\[|c_\eps| + \|V_\eps\|_{H^2(K_b)} \leq C\, \|F_{\eps}\|_{L^2(K_b)}.\]
We infer that
\[|c_\eps| + \|V_\eps\|_{H^2(K_b)} \leq C(\|d_\eps\|_{H^1(K_b)} + \|f\|_{L^2(K_b)}) \leq C(\|u_\eps\|_{H^1(K_b)} + \|\la_\eps\|_{H^1(K_b)} + \|f\|_{L^2(K_b)}).\]
And we conclude from (\ref{estim1}) that there exists a constant $C>0$ which depends only on the geometry such that
\[
\left\{
\begin{array}{cl}
\sqrt{\eps}\|\zeta d_\eps\|_{H^2(\Omega)} \leq C\, \|f\|_{L^2(\Omega)}&   \quad\mbox{if } \omega<\pi\\[4pt]
\sqrt{\eps}\|\zeta d_\eps\|_{H^{s}(\Omega)} \leq C\, \|f\|_{L^2(\Omega)}&  \quad\mbox{if } \omega> \pi,
\end{array}
\right.
\]
so that $\la_\eps=d_\eps-u_\eps$ satisfies the same estimate.
The case when $f$ is such that there is a solution $u$ to (\ref{cauchy_strong_simple}) follows the same lines: it suffices to use estimate (\ref{estim2}) instead of (\ref{estim1}).
\end{proof}
\subsection{Regularity at a corner of type $\tilde{\Gamma}$}\label{CornerGammaTilde}
We reuse the notations introduced in the last section.
\begin{proposition}
\label{Gamma_tilde}
Assume that $S$ is the vertex of a corner of type $\tilde{\Gamma}$.
Let us consider $s <1+\pi/\omega$ if $\omega>\pi$ and $s=2$ otherwise. 
For $f \in L^2(\Omega)$, the solution $(u_\eps,\la_\eps) \in V_{0} \times \tilde{V}_0$ to the problem (\ref{Pr_cauchy_simple}) is such that $\zeta u_\eps$ and $\zeta \la_\eps$ belong to $H^{s}(\Omega)$ and there exists a constant $C>0$ which depends only on the geometry such that
\[\forall \eps \in (0,1], \quad \eps \|\zeta u_\eps\|_{H^{s}(\Omega)} + \|\zeta \la_\eps\|_{H^{s}(\Omega)} \leq C \|f\|_{L^2(\Omega)}.\]
If in addition we assume that $f$ is such that problem (\ref{cauchy_strong_simple}) has a (unique) solution $u$, then
\[\forall \eps \in (0,1], \quad \sqrt{\eps} \|\zeta u_\eps\|_{H^{s}(\Omega)} + \frac{\|\zeta \la_\eps\|_{H^{s}(\Omega)}}{\sqrt{\eps}} \leq C \|u\|_{H^1(\Delta,\Omega)}.\]
\end{proposition}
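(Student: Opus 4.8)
The plan is to follow the strategy of the proof of Proposition~\ref{Gamma}: localize near $S$ with the radial cut-off $\zeta$, reduce $\zeta\la_\eps$ and a suitable combination of $\zeta u_\eps$ and $\zeta\la_\eps$ to the model problems of Lemmata~\ref{Dirichlet} and~\ref{Neumann}, and conclude with \cite[Theorem 1.4.5.3]{Gris85} together with the a priori estimates (\ref{estim1})--(\ref{estim2}). The only genuinely new feature, compared with a corner of type $\Gamma$, is that the two unknowns no longer decouple through $u_\eps$ and $d_\eps=u_\eps-\la_\eps$; at a corner of type $\tilde\Gamma$ it is instead $\la_\eps$ and the combination $e_\eps:=\eps u_\eps+\la_\eps$ that carry clean homogeneous boundary conditions near $S$.

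First I would treat $\la_\eps$. Since $\la_\eps=0$ on $\tilde\Gamma$, the function $\zeta\la_\eps$ solves the Dirichlet problem (\ref{dirichlet}) on $K_b$ with right-hand side $F_\eps=-\Delta\zeta\,\la_\eps-2\nabla\zeta\cdot\nabla\la_\eps-\zeta\,\eps f/(1+\eps)$, so that $\|F_\eps\|_{L^2(K_b)}\le C(\|\la_\eps\|_{H^1(K_b)}+\eps\|f\|_{L^2(\Omega)})$. Lemma~\ref{Dirichlet} then gives the splitting $\zeta\la_\eps=c_\eps\,r^{\pi/\omega}\sin(\pi\theta/\omega)+V_\eps$ with $|c_\eps|+\|V_\eps\|_{H^2(K_b)}\le C\|F_\eps\|_{L^2(K_b)}$ and $c_\eps=0$ when $\omega\le\pi$; combined with the fact that the singular function belongs to $H^s(\Omega)$ for every $s<1+\pi/\omega$ and with the bound $\|\la_\eps\|_{H^1(\Omega)}\le C\|f\|_{L^2(\Omega)}$ from (\ref{estim1}) (using $\eps\le1$), this yields $\|\zeta\la_\eps\|_{H^s(\Omega)}\le C\|f\|_{L^2(\Omega)}$.

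Next I would treat $u_\eps$ through $e_\eps=\eps u_\eps+\la_\eps$. Adding $\eps$ times the first interior equation of (\ref{Pr_cauchy_simple_strong}) to the second shows that $e_\eps$ is harmonic in $\Omega$, while the last boundary condition gives $\partial_\nu e_\eps=0$ on $\tilde\Gamma$; since $\zeta$ depends only on $r$, the normal derivative of $\zeta$ vanishes on the edges through $S$, so $\zeta e_\eps$ solves the Neumann problem (\ref{neumann}) on $K_b$ with right-hand side $F_\eps=-\Delta\zeta\,e_\eps-2\nabla\zeta\cdot\nabla e_\eps$. Lemma~\ref{Neumann} gives $\zeta e_\eps=c_\eps\,r^{\pi/\omega}\cos(\pi\theta/\omega)+V_\eps$ with $|c_\eps|+\|V_\eps\|_{H^2(K_b)}\le C\|e_\eps\|_{H^1(K_b)}$ and $c_\eps=0$ for $\omega\le\pi$. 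Since $\|e_\eps\|_{H^1(\Omega)}\le\eps\|u_\eps\|_{H^1(\Omega)}+\|\la_\eps\|_{H^1(\Omega)}\le C\|f\|_{L^2(\Omega)}$ by (\ref{estim1}), we get $\|\zeta e_\eps\|_{H^s(\Omega)}\le C\|f\|_{L^2(\Omega)}$, and writing $\zeta u_\eps=\eps^{-1}(\zeta e_\eps-\zeta\la_\eps)$ produces $\eps\|\zeta u_\eps\|_{H^s(\Omega)}\le C\|f\|_{L^2(\Omega)}$, which is the first assertion. For the compatible case I would run both steps again with (\ref{estim2}) replacing (\ref{estim1}): then $\|\la_\eps\|_{H^1(\Omega)}\le C\sqrt\eps\,\|u\|_{H^1(\Delta,\Omega)}$ and $\eps\|u_\eps\|_{H^1(\Omega)}\le C\sqrt\eps\,\|u\|_{H^1(\Delta,\Omega)}$, so that $\|\zeta\la_\eps\|_{H^s(\Omega)}$ and $\|\zeta e_\eps\|_{H^s(\Omega)}$ are both $\le C\sqrt\eps\,\|u\|_{H^1(\Delta,\Omega)}$; dividing by $\eps$ in the reconstruction of $\zeta u_\eps$ then gives $\sqrt\eps\,\|\zeta u_\eps\|_{H^s(\Omega)}\le C\|u\|_{H^1(\Delta,\Omega)}$ and $\|\zeta\la_\eps\|_{H^s(\Omega)}/\sqrt\eps\le C\|u\|_{H^1(\Delta,\Omega)}$.

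The step I expect to be the main obstacle is finding the right auxiliary unknown and then keeping track of the powers of $\eps$: the division by $\eps$ in $u_\eps=\eps^{-1}(e_\eps-\la_\eps)$ is exactly what makes the estimate for $u_\eps$ worse than the one for $\la_\eps$, so one must verify that the extra factor $\sqrt\eps$ available in the compatible case is genuinely preserved through $\la_\eps$, through $e_\eps$ and through this reconstruction. Once the substitution $e_\eps=\eps u_\eps+\la_\eps$ is in hand, everything else is a routine transcription of the corner-of-type-$\Gamma$ argument, the regularity of the singular functions being again supplied by \cite[Theorem 1.4.5.3]{Gris85}.
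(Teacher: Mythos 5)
Your proposal is correct and follows essentially the same route as the paper: the Dirichlet problem of Lemma~\ref{Dirichlet} for $\zeta\la_\eps$, the Neumann problem of Lemma~\ref{Neumann} for the auxiliary harmonic combination $\eps u_\eps+\la_\eps$ (your $e_\eps$ is the paper's $s_\eps$), and the a priori bounds (\ref{estim1})--(\ref{estim2}) to track the powers of $\eps$. Your explicit treatment of the compatible case, which the paper dismisses as ``similar'', is carried out correctly.
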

\begin{proof} 
From (\ref{Pr_cauchy_simple_strong}) we have that $\zeta \la_\eps$ satisfies
problem (\ref{dirichlet}) with
\[F_{\eps}=-\Delta \zeta \la_\eps-2\nabla \zeta\cdot \nabla \la_\eps - \eps \zeta \frac{f}{1+\eps}.\]
By using Lemma \ref{Dirichlet}, we have that
there exists a unique constant $c_\eps \in \mathbb{R}$ and a unique function $V_\eps \in H^2(K_b)$
such that
\[\zeta \la_\eps=c_{\eps}\, r^{\pi/\omega} \sin\left(\frac{\pi \theta}{\omega}\right) + V_{\eps}\]
and there exists a constant $C>0$ such that
\[|c_\eps| + \|V_\eps\|_{H^2(K_b)} \leq C\, \|F_{\eps}\|_{L^2(K_b)}.\]
We deduce the estimate
\[|c_\eps| + \|V_\eps\|_{H^2(K_b)} \leq C(\|\la_\eps\|_{H^1(\Omega)} +\eps \|f\|_{L^2(\Omega)}).\]
And we conclude from (\ref{estim1}) that there exists a constant $C>0$ which depends only on the geometry such that, for $s<1+\pi/\omega$
\[
\left\{
\begin{array}{cl}
\|\zeta \la_\eps\|_{H^2(\Omega)} \leq C\, \|f\|_{L^2(\Omega)}&  \quad\mbox{ if }\omega<\pi\\[4pt]
\|\zeta \la_\eps\|_{H^{s}(\Omega)} \leq C\, \|f\|_{L^2(\Omega)}&  \quad\mbox{ if }\omega> \pi.\\
\end{array}
\right.
\]
We remark from (\ref{Pr_cauchy_simple_strong}) that the function $s_\eps=\eps u_\eps+ \la_\eps$ satisfies $-\Delta s_\eps=0$ in $\Omega$ and
$\partial_\nu s_\eps=0$ on $\tilde{\Gamma}$, which implies that 
$\zeta s_\eps$ satisfies
problem (\ref{neumann}) with
\[F_{\eps}=-\Delta \zeta s_\eps-2\nabla \zeta\cdot \nabla s_\eps.\]
By using Lemma \ref{Neumann}, we have that
there exists a unique constant $c_\eps \in \mathbb{R}$ and a unique function $V_\eps \in H^2(K_b)$
such that
\[\zeta s_\eps=c_{\eps}\,r^{\pi/\omega} \cos\left(\frac{\pi \theta}{\omega}\right) + V_{\eps}\]
and there exists a constant $C>0$ such that
\[|c_\eps| + \|V_\eps\|_{H^2(K_b)} \leq C\, \|F_{\eps}\|_{L^2(K_b)}.\]
We infer that
\[|c_\eps| + \|V_\eps\|_{H^2(K_b)} \leq C\|s_\eps\|_{H^1(K_b)} \leq C(\eps \|u_\eps\|_{H^1(K_b)} + \|\la_\eps\|_{H^1(K_b)}).\]
And we conclude from (\ref{estim1}) that there is a constant $C>0$ which depends only on $\Om$ such that
\[
\left\{
\begin{array}{cl}
\|\zeta s_\eps\|_{H^2(\Omega)} \leq C\, \|f\|_{L^2(\Omega)}&  \quad\mbox{if }\omega<\pi\\[4pt]
\|\zeta s_\eps\|_{H^{s}(\Omega)} \leq C\, \|f\|_{L^2(\Omega)}&  \quad\mbox{if }\omega> \pi,\\
\end{array}
\right.
\]
so that $\eps u_\eps=s_\eps-\la_\eps$ satisfies the same estimate.
The case when $f$ is such that there is a solution $u$ to (\ref{cauchy_strong_simple}) is similar.
\end{proof}
\begin{remark}
We emphasize that the small parameter $\eps$ plays a different role in Proposition \ref{Gamma} and in Proposition \ref{Gamma_tilde}.
In Proposition \ref{Gamma}, the exponent in $\eps$ is the same before $u_\eps$ and before $\la_\eps$ because the corner is of type $\Gamma$ and $\Gamma$ is the support of the Cauchy data. In proposition \ref{Gamma_tilde}, the exponent in $\eps$ before $u_\eps$ is one more than the one before $\la_\eps$ because 
the corner is of type $\tilde{\Gamma}$ and data on $\tilde{\Gamma}$ are unknown.
\end{remark}
\noindent It remains to analyze the regularity of functions $u_\eps$ and $\la_\eps$ at corners of mixed type and to derive corresponding estimates. As we will see, this is a much more difficult task. The main reason is that  we do not know whether or not the eigenvectors of a certain symbol $\mathscr{L}_\eps$ defined on $(0,\om)$ (see (\ref{DefSymbol})) form a Hilbert basis of $L^2(0,\omega) \times L^2(0,\omega)$. To bypass this difficulty, we will apply the Kondratiev approach of the seminal article \cite{Kond67} (see also \cite{MaPl77,NaPl94,KoMR97,KoMR01} for more recent presentations). We will follow strictly the methodology proposed in these works. However, we emphasize that in our study we have to keep track of the dependence in $\eps$ in all the estimates. This is the reason why we present the procedure in details. Let us mention that a somehow similar analysis has been conducted in a simpler situation in \cite[Annex]{ChCi13}. We start by presenting some preliminaries on weighted Sobolev spaces borrowed from \cite{KoMR97}.
\section{Some preliminaries on weighted Sobolev spaces}\label{SectionWeightedSobo}
Let us consider the strip $B=\{(t,\theta) \in \mathbb{R}  \times (0,\omega)\}$ for $\omega>0$.
For $\beta \in \mathbb{R}$ and $m \in \mathbb{N}$, let us introduce the weighted Sobolev space 
\[W_\beta^m(B)=\{v \in L^2_{\rm loc}(B),\, e^{\beta t} v \in H^m(B)\},\]
equipped with the norm
\be \|v\|_{W_\beta^m(B)}=\|e^{\beta t} v\|_{H^m(B)}.\label{Wm}\ee
We also denote $\mathring{W}_\beta^m(B)$
the closure of $\mathscr{C}^{\infty}_0(B)$ in $W_\beta^m(B)$, $\mathring{W}_{\beta,0}^m(B)$ the closure in $W_\beta^m(B)$ of the set of functions in $\mathscr{C}^{\infty}_0(\overline{B})$ which vanish in a vicinity of 
$\partial B_0=\partial B \cap \{\theta=0\}$, $\mathring{W}_{\beta,\omega}^m(B)$ the closure in $W_\beta^m(B)$ of the set of functions in $\mathscr{C}^{\infty}_0(\overline{B})$ which vanish in a vicinity of $\partial B_\omega=\partial B \cap \{\theta=\omega\}$.
Let us introduce the Laplace transform 
\be \widehat{v}(\la)=(\mathcal{L}v)(\la)=\int_{-\infty}^{+\infty} e^{-\la t}v(t)\,dt. \label{laplace}\ee
We recall the following properties of the Laplace transform.
\begin{enumerate}
\item
The Laplace transform is a linear and continuous map from $\mathscr{C}^{\infty}_0(\mathbb{R})$ to the space of holomorphic functions in the complex plane.
In addition, we have $\mathcal{L}(\partial_t v)=\la \mathcal{L}(v)$ for all $v \in \mathscr{C}^{\infty}_0(\mathbb{R})$.
\item Pour all $u,v \in \mathscr{C}^{\infty}_0(\mathbb{R})$, we have the Parseval identity
\[\int_{-\infty}^{+\infty}e^{2\beta t}u(t)\overline{v(t)}\,dt=\frac{1}{2\pi i}\int_{{\rm Re}\, \la=-\beta} \widehat{u}(\la)\overline{\widehat{v}(\la)}\,d\la.\]
Hence, the Laplace transform (\ref{laplace}) can be extended as an isomorphism from $L^2_\beta(\mathbb{R})$ to $L^2(\ell_{-\beta})$, where 
$L^2_\beta(\mathbb{R})=\{v \in L^2_{\rm loc}(\mathbb{R}),\, e^{\beta t} v \in L^2(\mathbb{R})\}$ and $\ell_{-\beta}=\{\la=-\beta+ i \tau,\,\tau \in \mathbb{R}\}$.
\item The inverse Laplace transform is given by the formula
\[v(t)=(\mathcal{L}^{-1}\widehat{v})(t)=\frac{1}{2\pi i} \int_{\ell_{-\beta}} e^{\la t}\widehat{v}(\la)\,d\la.\]
\item If $v \in L_{\beta_1}^2(\mathbb{R}) \cap L_{\beta_2}^2(\mathbb{R})$ for $\beta_1<\beta_2$, then the function $\la \mapsto \widehat{v}(\la)$ is holomorphic in the strip defined by
$-\beta_2<{\rm Re}\,\la<-\beta_1$.
\end{enumerate} 
By using the above properties, one can prove that for $\beta \in \mathbb{R}$ and $m \in \mathbb{N}$, the
norm (\ref{Wm}) is equivalent to the norm given
by
\be \|v\|_{\beta,m}=\left(\frac{1}{2\pi i}\int_{\ell_{-\beta}}\|\widehat{v}\|^2_{H^m(0,\omega;\la)}\,d\la\right)^{1/2},\label{norm}\ee
where
\be \|\widehat{v}\|^2_{H^m(0,\omega;\la)}=\|\widehat{v}\|^2_{H^m(0,\omega)} + |\la|^{2m} \|\widehat{v}\|^2_{L^2(0,\omega)}.\label{norm_param}\ee
Next, we introduce the infinite cone
\[K=\{(r\cos \theta,r\sin\theta),\,r>0,\,0<\theta<\omega\},\]
with $\omega \in (0,2\pi)$.
For $\beta \in \mathbb{R}$ and $m \in \mathbb{N}$, let us introduce the weighted Sobolev space 
$V_\beta^m(K)$ as the closure of $\mathscr{C}^{\infty}_0(\overline{K}\setminus \{0\})$
for the norm
\be \|v\|_{V_\beta^m(K)}=\left(\sum_{|\alpha| \leq m} \|r^{|\alpha|-m+\beta} \partial_x^{\alpha}v\|_{L^2(K)}^2\right)^{1/2}.\label{Vm}\ee
We also denote by $\mathring{V}_\beta^m(K)$ the closure of $\mathscr{C}^{\infty}_0(K)$ in $V_\beta^m(K)$, $\mathring{V}_{\beta,0}^m(K)$ the closure in $V_\beta^m(K)$ of the set of functions in $\mathscr{C}^{\infty}_0(\overline{K})$ which vanish in a vicinity of $\partial K_0=\partial K \cap \{\theta=0\}$, $\mathring{V}_{\beta,\omega}^m(K)$ the closure in $V_\beta^m(K)$ of the set of functions in $\mathscr{C}^{\infty}_0(\overline{K})$ which vanish in a vicinity of $\partial K_\omega=\partial K \cap \{\theta=\omega\}$.
One can show that the norm of $V_\beta^m(K)$ is equivalent to the norm
\be \|v\|=\left(\int_0^{+\infty}\ r^{2(-m+\beta)} \sum_{j=0}^m \|(r\partial_r)^j v(r,\cdot)\|^2_{H^{m-j}(0,\omega)}r\,dr\right)^{1/2}.\label{normK}\ee
The key point consists in the change of variable $t=\ln r$, which transforms the cone $K=\mathbb{R}^*_+ \times (0,\omega)$ into the strip $B=\mathbb{R} \times (0,\omega)$.
In particular, if we introduce, for a function $v$ defined in $K$, the function $\mathcal{E}v$ defined in $B$ by
\[(\mathcal{E}v)(\theta,t)=v(\theta,e^t),\]
since $r\partial_r v=\partial_t (\mathcal{E}v)$, the norm (\ref{normK}) is equivalent to 
\[ \|v\|=\left(\int_0^{+\infty}\ e^{2(-m+\beta+1)t} \sum_{j=0}^m \|\partial_t^j (\mathcal{E}v)(t,\cdot)\|^2_{H^{m-j}(0,\omega)}\,dt\right)^{1/2},\]
hence
\[\|v\|=\|e^{(-m+\beta+1)t}\mathcal{E}v\|_{H^m(B)}=\|\mathcal{E}v\|_{W^m_{\beta-m+1}(B)}.\]
This shows that there exists an isomorphism between the spaces $V_\beta^m(K)$ and $W^m_{\beta-m+1}(B)$, or in other words, between $W_\beta^m(B)$ and $V^m_{\beta+m-1}(K)$.
\section{The case of a corner of mixed type}\label{CornerGammaGammaTilde}
The regularity of solutions $u_\eps$ and $\la_\eps$ at a corner of mixed type can no longer be analyzed separately.
We use the weighted Sobolev spaces introduced in the previous section.
We first consider the quasi-reversibility problem in the strip $B$.
The strong equations corresponding to (\ref{Pr_cauchy_simple}) in the strip are
\be \left\{
\begin{array}{ll}
\displaystyle  -\Delta u_\eps  = \Delta \la_\eps/\eps= f/(1+\eps)&  \mbox{in }B\\[4pt]
\displaystyle u_\eps=0 \,\,\text{and}\,\,\partial_\nu u_\eps -\partial_\nu \la_\eps=0 & \mbox{on }\partial B_0 \\[4pt]
\displaystyle \la_\eps=0\,\,\text{and}\,\, \eps\, \partial_\nu u_\eps + \partial_\nu \la_\eps=0& \mbox{on }\partial B_\omega. 
\end{array}
\right.
\label{Pr_cauchy_strong_B}
\ee
For $\beta\in\R$, define the operator $\mathcal{B}_\beta: \mathcal{D}(\mathcal{B}_\beta)\rightarrow \mathcal{R}(\mathcal{B}_\beta)$ such that $\mathcal{B}_\beta(u_\eps,\la_\eps)=(f_1,f_2)$,
with 
\begin{equation}\label{defOpBbeta}
\begin{array}{ll}
&(f_1,f_2)=(-\Delta u_\eps,-\Delta \la_\eps/\eps)\\[3pt]
&\mathcal{D}(\mathcal{B}_\beta)=\{(u_\eps,\la_\eps) \in \mathring{W}^1_{\beta,0}(B) \cap W_\beta^{2}(B) \times \mathring{W}^1_{\beta,\omega}(B) \cap  W_\beta^{2}(B),\\[3pt]
&\hspace{2cm}\partial_\nu u_\eps-\partial_\nu \la_\eps=0\,\,{\rm on}\,\, \partial B_0,\quad \eps \partial_\nu u_\eps+ \partial_\nu \la_\eps =0 \,\,{\rm on}\,\, \partial B_\omega\}\\[3pt]
\mbox{and }\qquad&\mathcal{R}(\mathcal{B}_\beta)=W^0_\beta(B) \times W_\beta^0(B).
\end{array}
\end{equation}
This operator is associated with the following problem in the strip $B$:
\be \left\{
\begin{array}{rcll}
 \displaystyle -\Delta u_\eps  &=& f_1&  \mbox{in } B\\
 \displaystyle -\Delta \la_\eps  &=& \eps\,f_2&  \mbox{in } B\\
 u_\eps&=&0  & \mbox{on } \partial B_0 \\
 \partial_\nu u_\eps -\partial_\nu \la_\eps&=&0 & \mbox{on } \partial B_0 \\
 \la_\eps&=&0  & \mbox{on } \partial B_\omega \\
 \eps\, \partial_\nu u_\eps + \partial_\nu \la_\eps&=&0 & \mbox{on }\partial B_\omega.
\end{array}
\right.
\label{Pr_cauchy_strip}
\ee
If we apply the Laplace transform to problem (\ref{Pr_cauchy_strip}), the following symbol $\mathscr{L}_\eps(\la): \mathcal{D}(\mathscr{L}_\eps) \rightarrow \mathcal{R}(\mathscr{L}_\eps)$ such that $\mathscr{L}_\eps(\la)(\varphi_\eps,\psi_\eps)=(g_1,g_2)$ naturally appears, with
\begin{equation}\label{DefSymbol}
\begin{array}{llcl}
&(g_1,g_2)&=&\dsp(-(\la^2+d^2_\theta)\varphi_\eps,-\frac{1}{\eps}(\la^2+d^2_\theta)\psi_\eps)\\[3pt]
&\mathcal{D}(\mathscr{L}_\eps)&=&\{(\varphi_\eps,\psi_\eps) \in H^{2}(0,\omega) \times H^{2}(0,\omega),\,\varphi_\eps(0)=0,\, \psi_\eps(\omega)=0,\\[3pt]
&&&\hspace{0.2cm}d_\theta \varphi_\eps(0)-d_\theta \psi_\eps(0)=0,\, \eps d_\theta \varphi_\eps(\omega)+ d_\theta \psi_\eps(\omega) =0\},\\[3pt]
\mbox{ and }&\mathcal{R}(\mathscr{L}_\eps)&=&L^2(0,\omega) \times L^2(0,\omega).
\end{array}
\end{equation}
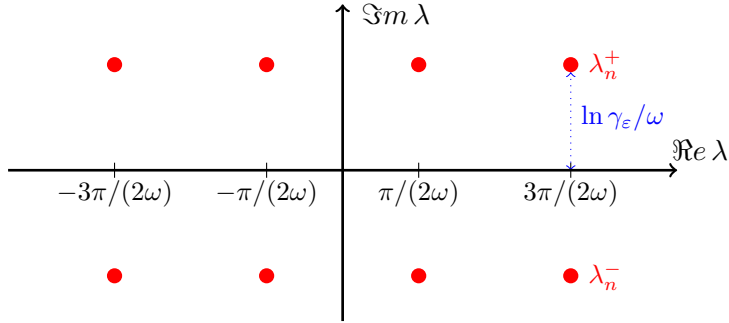
\begin{figure}[!ht]
\centering
\begin{tikzpicture}
\draw (1,-0.1)--(1,0.1);
\draw (3,-0.1)--(3,0.1);
\draw (-1,-0.1)--(-1,0.1);
\draw (-3,-0.1)--(-3,0.1);
\draw[<->,dotted,blue] (3,0)--(3,1.3);
\node at (1,0) [anchor=north]{\small $\pi/(2\omega)$};
\node at (3,0) [anchor=north]{\small $3\pi/(2\omega)$};
\node at (-1,0) [anchor=north]{\small $-\pi/(2\omega)$};
\node at (-3,0) [anchor=north]{\small $-3\pi/(2\omega)$};
\node at (3,0.7) [anchor=west]{\small \textcolor{blue}{$\ln \gamma_\eps/\omega$}};
\node at (3.1,1.4) [anchor=west]{\small \textcolor{red}{$\lambda^+_n$}};
\node at (3.1,-1.4) [anchor=west]{\small \textcolor{red}{$\lambda^-_n$}};
\draw[draw=black,line width=1pt,->](-4.4,0)--(4.4,0);
\draw[draw=black,line width=1pt,->](0,-2)--(0,2.2);
\filldraw [red,draw=none] (1,1.4) circle (0.1);
\filldraw [red,draw=none] (1,-1.4) circle (0.1);
\filldraw [red,draw=none] (-1,1.4) circle (0.1);
\filldraw [red,draw=none] (-1,-1.4) circle (0.1);
\filldraw [red,draw=none] (3,1.4) circle (0.1);
\filldraw [red,draw=none] (3,-1.4) circle (0.1);
\filldraw [red,draw=none] (-3,1.4) circle (0.1);
\filldraw [red,draw=none] (-3,-1.4) circle (0.1);
\node at (4.7,0) [anchor=south] {$\Re e\,\lambda$};
\node at (0.7,1.8) [anchor=south] {$\Im m\,\lambda$};
\end{tikzpicture}
\caption{Position of the $\lambda_n^{\pm}$ in the complex plane.\label{fig local vp}}
\end{figure}
We will say that $\la \in \mathbb{C}$ is an eigenvalue of $\mathscr{L}_\eps$ if ${\rm Ker}\,\mathscr{L}_\eps(\la) \neq \{0\}$.
We have the following lemma.
\begin{lemma}
\label{eigenvalue_qr}
The eigenvalues of the symbol $\mathscr{L}_\eps$ are 
\[\la^\pm_n=\frac{1}{\omega}\left(\frac{\pi}{2}+n\pi \pm i \ln \gamma_\eps\right),\quad n \in \mathbb{Z},\]
with 
\[\gamma_\eps=\sqrt{1+\frac{1}{\eps}}+\sqrt{\frac{1}{\eps}}\qquad\mbox{(see Figure \ref{fig local vp})}.\]
The corresponding (non normalized) eigenfunctions are given by
\[\varphi^\pm_n(\theta)=\cos(\la^\pm_n \omega)\sin(\la^\pm_n \theta),\qquad\qquad \psi^\pm_n(\theta)=\sin(\la^\pm_n(\theta-\omega)).\]
\end{lemma}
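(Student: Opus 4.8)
The plan is to find all $\la\in\Cplx$ for which the symbol $\mathscr{L}_\eps(\la)$ has a nontrivial kernel, by solving the decoupled ODE system explicitly and then imposing the four boundary conditions. Suppose $(\varphi_\eps,\psi_\eps)\in\mathcal{D}(\mathscr{L}_\eps)$ lies in $\ker\mathscr{L}_\eps(\la)$. Then $(\la^2+d_\theta^2)\varphi_\eps=0$ and $(\la^2+d_\theta^2)\psi_\eps=0$ on $(0,\omega)$, so (for $\la\neq 0$) each of $\varphi_\eps,\psi_\eps$ is a linear combination of $\sin(\la\theta)$ and $\cos(\la\theta)$. The condition $\varphi_\eps(0)=0$ forces $\varphi_\eps(\theta)=A\sin(\la\theta)$, and the condition $\psi_\eps(\omega)=0$ forces $\psi_\eps(\theta)=B\sin(\la(\theta-\omega))$, for constants $A,B$. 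It remains to impose the two transmission-type conditions at the endpoints: $d_\theta\varphi_\eps(0)-d_\theta\psi_\eps(0)=0$ gives $A\la-B\la\cos(\la\omega)=0$, and $\eps\,d_\theta\varphi_\eps(\omega)+d_\theta\psi_\eps(\omega)=0$ gives $\eps A\la\cos(\la\omega)+B\la=0$.

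Dividing by $\la$, this is the $2\times2$ linear system $A=B\cos(\la\omega)$, $\eps A\cos(\la\omega)=-B$. A nontrivial solution $(A,B)$ exists iff the determinant vanishes, i.e.
\[
1+\eps\cos^2(\la\omega)=0,\qquad\text{equivalently}\qquad \cos^2(\la\omega)=-\frac{1}{\eps}.
\]
Writing $z=\la\omega$, I must solve $\cos z=\pm i/\sqrt{\eps}$. Using $\cos z=\tfrac12(e^{iz}+e^{-iz})$ and setting $w=e^{iz}$, this becomes the quadratic $w^2\mp\tfrac{2i}{\sqrt\eps}w+1=0$, whose roots are $w=\tfrac{i}{\sqrt\eps}(\pm1\pm\sqrt{1+\eps})$ — that is, $w$ is $i$ times a real number of modulus $\gamma_\eps=\sqrt{1+1/\eps}+\sqrt{1/\eps}$ or its reciprocal (the two roots multiply to $1$). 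Hence $e^{iz}=i\,\gamma_\eps^{\pm1}=e^{i\pi/2}\gamma_\eps^{\pm1}$, which gives $iz=i(\pi/2+2k\pi)\pm\ln\gamma_\eps$ for $k\in\Z$, i.e. $z=\pi/2+2k\pi\mp i\ln\gamma_\eps$. Keeping track of the sign choice $\cos z=-i/\sqrt\eps$ as well (which shifts $\pi/2$ by odd multiples of $\pi$), and reindexing, all solutions are $z=\pi/2+n\pi\pm i\ln\gamma_\eps$ with $n\in\Z$; dividing by $\omega$ yields $\la^\pm_n=\tfrac1\omega(\tfrac\pi2+n\pi\pm i\ln\gamma_\eps)$ as claimed. (The case $\la=0$ must be checked separately and discarded: it is routine to verify $\mathscr{L}_\eps(0)$ is injective on $\mathcal{D}(\mathscr{L}_\eps)$, or simply note $0$ is not of the stated form.)

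For the eigenfunctions, with $\la=\la^\pm_n$ I may normalize $B=1$, $A=\cos(\la\omega)$, which gives $\varphi^\pm_n(\theta)=\cos(\la^\pm_n\omega)\sin(\la^\pm_n\theta)$ and $\psi^\pm_n(\theta)=\sin(\la^\pm_n(\theta-\omega))$, exactly the stated formulas; one should check they are consistent with the second equation $\eps A\cos(\la\omega)=-B$, which is precisely the determinant condition $1+\eps\cos^2(\la\omega)=0$ and hence automatically satisfied. The only mildly delicate point in the whole argument is the bookkeeping of branches when inverting $\cos z=\pm i/\sqrt\eps$ — making sure that the two sign choices and the integer $k$ combine to give exactly the single family indexed by $n\in\Z$ and the sign $\pm$, with no omissions or repetitions — but this is elementary complex analysis once the substitution $w=e^{iz}$ is made. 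I expect no real obstacle here; the lemma is essentially a computation, and the subsequent sections (where the $\eps$-dependence of $\gamma_\eps$, and the position of the $\la^\pm_n$ relative to the lines $\Re\la=\pm\pi/(2\omega),\pm3\pi/(2\omega),\dots$ shown in Figure \ref{fig local vp}, are exploited) are where the work of the Kondratiev analysis will lie.
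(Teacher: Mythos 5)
Your argument is correct and follows essentially the same route as the paper: reduce to $\varphi=A\sin(\la\theta)$, $\psi=B\sin(\la(\theta-\omega))$, obtain the determinant condition $1+\eps\cos^2(\la\omega)=0$, and solve it via the substitution $w=e^{i\la\omega}$, whose roots have moduli $\gamma_\eps^{\pm1}$. The only cosmetic remark is that your parenthetical ``simply note $0$ is not of the stated form'' is circular as a justification that $\la=0$ is not an eigenvalue, but your primary suggestion (checking injectivity of $\mathscr{L}_\eps(0)$ directly) is the right one and is immediate.
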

\begin{proof}
Let us find all non vanishing pairs $(\varphi,\psi)$ such that
$-(\la^2+d^2_\theta)\varphi=0$ and $-(\la^2+d^2_\theta)\psi=0$
in $(0,\omega)$
with $\varphi(0)=0$, $\psi(\omega)=0$, $d_\theta\varphi(0)-d_\theta \psi(0)=0$ and $\eps d_\theta \varphi(\omega)+d_\theta \psi(\omega)=0$.
It is readily seen that $\la=0$ is not an eigenvalue, so that we assume that $\la \neq 0$ in the sequel.
From the two equations in $(0,\omega)$ and the two first boundary conditions, we obtain that
\[\varphi(\theta)=A \sin(\la \theta),\qquad\qquad \psi(\theta)=B\sin(\la(\theta-\omega)).\]
Then we use the two last boundary conditions, and we obtain (since $\la \neq 0$)
\[A-B \cos (\la \omega)=0,\qquad\qquad \eps A \cos(\la \omega) + B=0.\]
The complex number $\la$ is an eigenvalue if and only if 
\[1+\eps \cos^2(\la\omega)=0,\] that is if and only if
$\cos(\la\omega)=\pm i/\sqrt \eps$. Hence we deduce that we must have
\[z^2+2 \frac{\pm i}{\sqrt{\eps}}z+1=0\qquad \mbox{ for }z:=e^{i\la\omega}.\]
The solutions to these two equations are
$z= \pm i \gamma^\pm_\eps$, with 
\[\gamma^\pm_\eps=\sqrt{1+\frac{1}{\eps}} \pm \sqrt{\frac{1}{\eps}}.\]
It remains to find $\la$ such that $e^{i\la\omega}=\pm i \gamma^\pm_\eps$.
Writing $\la=a+ib$ with $(a,b) \in \mathbb{R}^2$, since $\pm i =e^{i\pi(1\mp 1/2)}$, we find
\[e^{-b \omega}=\gamma^\pm_\eps,\qquad a\omega=\pi(1 \mp \frac{1}{2})+2n\pi, \quad n \in \mathbb{Z}.\]
This implies
\[b \omega=-\ln \gamma^\pm_\eps,\qquad\qquad a\omega= \frac{\pi}{2}+n\pi, \quad n \in \mathbb{Z},\]
which gives the result, in view of $\ln \gamma^-_\eps=-\ln \gamma^+_\eps$ (note that $\gamma^+_\eps\gamma^-_\eps=1$).
\end{proof}
\begin{remark}
We notice that the symbol $\mathscr{L}_\eps$ has complex eigenvalues and is not self-adjoint. This is a difference with the symbols which are involved when considering the Laplace equation with Dirichlet or Neumann boundary conditions.
\end{remark}
\noindent Let us first consider the case $\beta=0$. Then we simply denote $\mathring{W}_{0,0}^1(B)=H^1_{0,0}(B)$ and $\mathring{W}_{0,\omega}^1(B)=H^1_{0,\omega}(B)$.
We have the following theorem.
\begin{theorem}\label{th_0qr}
The operator $\mathcal{B}_0$ defined in (\ref{defOpBbeta}) is an isomorphism. Furthermore, there exists a constant $C>0$ such that for all $(f_1,f_2) \in \mathcal{R}(\mathcal{B}_0)$, the solution $(u_\eps,\la_\eps) \in \mathcal{D}(\mathcal{B}_0)$ to the problem (\ref{Pr_cauchy_strong_B}) satisfies
\[\sqrt{\eps} \|u_\eps\|_{H^2(B)} + \|\la_\eps\|_{H^2(B)} \leq C (\|f_1\|_{L^2(B)}+ \sqrt{\eps}\|f_2\|_{L^2(B)}).\]
\end{theorem}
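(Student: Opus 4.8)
The plan is to reduce the boundary value problem governed by $\mathcal{B}_0$ on the strip $B$ to the one-parameter family of one-dimensional problems carried by its operator symbol $\mathscr{L}_\eps(\la)$ of (\ref{DefSymbol}), through the Laplace transform in the variable $t$, and then to invoke the norm equivalence (\ref{norm})--(\ref{norm_param}) (with $\beta=0$, so that the relevant contour is the imaginary axis $\ell_0=\{\mrm{Re}\,\la=0\}$). The heart of the matter is the symbol estimate: there exists $C>0$, independent of $\la\in\ell_0$ and of $\eps\in(0,1]$, such that if $\mathscr{L}_\eps(\la)(\varphi,\psi)=(g_1,g_2)$, then
\[\sqrt{\eps}\,\|\varphi\|_{H^2(0,\omega;\la)}+\|\psi\|_{H^2(0,\omega;\la)}\leq C\bigl(\|g_1\|_{L^2(0,\omega)}+\sqrt{\eps}\,\|g_2\|_{L^2(0,\omega)}\bigr).\]
Since the homogeneous equations $-(\la^2+d_\theta^2)\varphi=0$ and $-(\la^2+d_\theta^2)\psi=0$ have the explicit solutions $e^{\pm i\la\theta}$, the general solution of $\mathscr{L}_\eps(\la)(\varphi,\psi)=(g_1,g_2)$ can be written by variation of parameters, and the four conditions defining $\mathcal{D}(\mathscr{L}_\eps)$ reduce to a $4\times4$ linear system whose determinant equals, up to a non-vanishing factor, $1+\eps\cos^2(\la\omega)$ --- precisely the quantity appearing in the proof of Lemma \ref{eigenvalue_qr}. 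I would therefore produce $\mathscr{L}_\eps(\la)^{-1}$ in closed form in terms of $\gamma_\eps$ and estimate the resulting Green kernel.

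To keep the bounds uniform I would split the contour. For $|\la|$ bounded, $\mathscr{L}_\eps(\la)$ is invertible by Lemma \ref{eigenvalue_qr}: the eigenvalues $\la_n^{\pm}$ have real part $\frac{1}{\omega}\bigl(\frac{\pi}{2}+n\pi\bigr)\neq0$, so $\ell_0$ meets no eigenvalue and, crucially, $\mrm{dist}(\ell_0,\{\la_n^{\pm}\})=\pi/(2\omega)$ does not depend on $\eps$. Performing the rescaling $\varphi\mapsto\sqrt{\eps}\,\varphi$, which turns the boundary conditions into $\varphi(0)=0$, $\psi(\omega)=0$, $d_\theta\varphi(0)-\sqrt{\eps}\,d_\theta\psi(0)=0$ and $\sqrt{\eps}\,d_\theta\varphi(\omega)+d_\theta\psi(\omega)=0$, one obtains a symbol depending continuously on $\sqrt{\eps}\in[0,1]$ and on $\la$, and a continuity/compactness argument yields a uniform bound over $\{\la\in\ell_0:|\la|\leq R\}$. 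For $|\la|$ large, the scalar operator $-(\la^2+d_\theta^2)$ is elliptic with parameter along $\ell_0$ (on the imaginary axis its principal symbol is $\xi^2+|\la|^2$), the Shapiro--Lopatinskii conditions for the half-line problems at $\theta=0$ and $\theta=\omega$ hold, and the parameter-dependent elliptic estimates (following the methodology of \cite{Kond67,KoMR97}) provide the bound. The only subtlety is that the Shapiro--Lopatinskii condition at $\theta=\omega$ degenerates as $\eps\to0$ --- its defect being of order $\sqrt{\eps}$ after the rescaling above --- which is exactly compensated by the weights $\sqrt{\eps}$ carried by $\varphi$ and $g_2$ in the target inequality.

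With the symbol estimate in hand, the Laplace transform in $t$ turns the problem $\mathcal{B}_0(u_\eps,\la_\eps)=(f_1,f_2)$, that is (\ref{Pr_cauchy_strip}), into $\mathscr{L}_\eps(\la)(\widehat{u}_\eps,\widehat{\la}_\eps)=(\widehat{f}_1,\widehat{f}_2)$ for $\la\in\ell_0$; squaring the symbol estimate, integrating over $\ell_0$ and using (\ref{norm}) gives
\[\sqrt{\eps}\,\|u_\eps\|_{H^2(B)}+\|\la_\eps\|_{H^2(B)}\leq C\bigl(\|f_1\|_{L^2(B)}+\sqrt{\eps}\,\|f_2\|_{L^2(B)}\bigr),\]
which is at once the announced estimate and the bounded invertibility of $\mathcal{B}_0$ onto its range. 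Surjectivity follows because each $\mathscr{L}_\eps(\la)$, $\la\in\ell_0$, is onto; injectivity follows from uniqueness of the variational solution, proved exactly as in the proof of Theorem \ref{base} by applying the Lax--Milgram lemma to the bilinear form $A_\eps$ on $H^1_{0,0}(B)\times H^1_{0,\omega}(B)$ --- coercivity holds because, the functions vanishing on one of the lateral sides $\partial B_0$, $\partial B_\omega$, the Poincar\'e inequality in the $\theta$-direction makes $\|\nabla\cdot\|_{L^2(B)}$ equivalent to the full $H^1(B)$ norm on these spaces; this also yields, along the way, the cheaper $\eps$-uniform bound $\sqrt{\eps}\,\|u_\eps\|_{H^1(B)}+\|\la_\eps\|_{H^1(B)}\leq C(\|f_1\|_{L^2(B)}+\sqrt{\eps}\,\|f_2\|_{L^2(B)})$.

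The main obstacle is the $\eps$-uniformity of the symbol estimate, and within it the degeneration, as $\eps\to0$, of the boundary condition $\eps\,\partial_\nu u_\eps+\partial_\nu\la_\eps=0$ on $\partial B_\omega$: one has to verify that the associated loss is of order exactly $\sqrt{\eps}$, and no worse, uniformly over the whole contour $\ell_0$ and over $\eps\in(0,1]$. This is where the explicit value of $\gamma_\eps$ from Lemma \ref{eigenvalue_qr} and the careful split into bounded and large $|\la|$ are essential, and it is the reason why, as announced in the introduction, the classical arguments \emph{\`a la} Grisvard do not suffice here.
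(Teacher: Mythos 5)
Your overall architecture --- Laplace transform in $t$, a symbol estimate on the contour $\ell_0$ uniform in $\eps$ and in $\la$, then Parseval via the norm equivalence (\ref{norm}) --- is exactly the paper's. Where you diverge is in how the symbol estimate is obtained, and here the paper is both much shorter and more complete than what you propose. The paper observes that on the imaginary axis, $\la=i\tau$ gives $\la^2=-\tau^2$, so the transformed problem is equivalent to the variational system (\ref{weak_1d}) whose underlying form is coercive on $H^1_{0,0}(0,\omega)\times H^1_{0,\omega}(0,\omega)$ with constants independent of both $\tau$ and $\eps$: testing with $(\widehat{u}_\eps,\widehat{\la}_\eps)$ and subtracting the two equations yields the weighted energy identity, Cauchy--Schwarz and Poincar\'e give the $H^1$ and $|\la|^2 L^2$ bounds with the correct $\sqrt{\eps}$ weights, and the $d_\theta^2$ bound is then read off the equations $d_\theta^2\widehat{u}_\eps=|\la|^2\widehat{u}_\eps-\widehat{f}_1$, $d_\theta^2\widehat{\la}_\eps=|\la|^2\widehat{\la}_\eps-\eps\widehat{f}_2$. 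No contour splitting, no Green kernel, no parameter-elliptic machinery is needed for $\beta=0$; that heavier apparatus (explicit solution formulas, the quantity $1+\eps\cos^2(\la\omega)$, and the uniform bounds of Lemmas \ref{lemmaEstimate} and \ref{lemmaEstimate2}) is reserved for Theorem \ref{th_betaqr}, i.e.\ for $\beta\neq 0$, where coercivity is lost.

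The concrete weakness in your route is the large-$|\la|$ regime. You assert that the Shapiro--Lopatinskii defect of the condition $\eps\,\partial_\nu u_\eps+\partial_\nu\la_\eps=0$ is ``of order exactly $\sqrt{\eps}$'' and is ``exactly compensated'' by the weights, but this is precisely the statement that has to be proved, uniformly over $\eps\in(0,1]$ and $\la\in\ell_0$; standard parameter-elliptic estimates do not come with uniformity in a degenerating boundary operator for free. If you insist on this route, you must make the compensation quantitative, e.g.\ via the explicit solution formulas: on $\ell_0$ one has $\cos(\la\omega)=\cosh(\tau\omega)$, hence $1+\eps\cos^2(\la\omega)\geq\max(1,\eps\cosh^2(\tau\omega))$, which makes the analogues of (\ref{Estimations}) elementary for $\beta=0$ --- but at that point you have reconstructed the $\beta\neq0$ proof in a case where the one-line coercivity argument already suffices. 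Your bounded-$|\la|$ compactness argument after the rescaling $\varphi\mapsto\sqrt{\eps}\,\varphi$ does go through (the limiting decoupled problems carry two conditions at a single endpoint and hence only the trivial kernel), and your concluding injectivity/surjectivity remarks are fine; the missing piece is solely the justified uniformity of the high-frequency estimate.
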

\begin{proof}
We simply have
\[
\begin{array}{ll}
&\mathcal{D}(\mathcal{B}_0)=\{(u_\eps,\la_\eps) \in H^1_{0,0}(B) \cap H^{2}(B) \times H^1_{0,\omega}(B) \cap H^{2}(B),\\[3pt]
&\hspace{2cm}\partial_\nu u_\eps-\partial_\nu \la_\eps=0\,\,{\rm on}\,\, \partial B_0,\quad \eps \partial_\nu u_\eps+ \partial_\nu \la_\eps =0 \,\,{\rm on}\,\, \partial B_\omega\}\\[3pt]
\mbox{and }&\mathcal{R}(\mathcal{B}_0)=L^2(B) \times L^2(B).
\end{array}
\]
By applying the Laplace transform to the problem (\ref{Pr_cauchy_strip}) with respect to $t$ and by setting $\la=i\tau$ with $\tau \in \mathbb{R}$, we obtain
\[ \left\{
\begin{array}{rcll}
\displaystyle (\tau^2 - d^2_\theta) \widehat{u}_\eps  &=& \widehat{f}_1 &\mbox{ in } (0,\omega)\\
\displaystyle (\tau^2 - d^2_\theta)\widehat{\la}_\eps  &=& \eps \widehat{f}_2 &\mbox{ in } (0,\omega)\\
\widehat{u}_\eps(0)&=&0&  \\
d_\theta \widehat{u}_\eps(0) -d_\theta \widehat{\la}_\eps(0)&=&0& \\
\widehat{\la}_\eps(\omega)&=&0&  \\
\eps\, d_\theta \widehat{u}_\eps(\omega) + d_\theta \widehat{\la}_\eps(\omega)&=&0.&
\end{array}
\right.
\]
For fixed $\tau$, this problem is equivalent to the weak formulation: find $(\widehat{u}_\eps,\widehat{\la}_\eps) \in H_{0,0}^1(0,\omega) \times H_{0,\omega}^1(0,\omega)$, where $H_{0,0}^1(0,\omega)=\{v\in H^1(0,\omega),\,\, v(0)=0\}$ and 
$H_{0,\omega}^1(0,\omega)=\{\mu \in H^1(0,\omega),\,\, \mu(\omega)=0\}$, such that for all $(v,\mu) \in H_{0,0}^1(0,\omega) \times H_{0,\omega}^1(0,\omega)$,
\begin{equation} 
\left\{\begin{array}{rcl} 
\displaystyle \eps \int_0^\omega (d_\theta \widehat{u}_\eps\, d_\theta \overline{v} + \tau^2 \widehat{u}_\eps \overline{v})\,d\theta   + \int_0^\omega (d_\theta \overline{v}\, d_\theta \widehat{\lambda}_\eps+\tau^2 \overline{v}\, \widehat{\la}_\eps)\,d\theta&=& \dsp\int_0^\omega \eps (\widehat{f}_1+ \widehat{f}_2)\overline{v}\,d\theta\\[8pt]
\displaystyle \int_0^\omega (d_\theta \widehat{u}_\eps\, d_\theta \overline{\mu} +\tau^2 \widehat{u}_\eps \overline{\mu})\,d\theta - \int_0^\omega (d_\theta \widehat{\lambda}_\eps\,d_\theta \overline{\mu}+\tau^2 \widehat{\la}_\eps \overline{\mu})\,d\theta&=&\dsp
\int_0^\omega (\widehat{f}_1- \eps \widehat{f}_2)\overline{\mu}\,d\theta.
\end{array} \right. 
\label{weak_1d} 
\end{equation}
By the Lax-Milgram Lemma, the weak formulation (\ref{weak_1d}) is well-posed and there exists
some constant $C>0$ (independent of $\la$ and of $\eps$) such that
\be \label{estimate_qr}
\begin{array}{l}
\dsp\sqrt{\eps} (\|\widehat{u}_\eps\|_{H^2(0,\omega)}+ |\la|^2 \|\widehat{u}_\eps\|_{L^2(0,\omega)})
+ \|\widehat{\la}_\eps\|_{H^2(0,\omega)}+ |\la|^2 \|\widehat{\la}_\eps\|_{L^2(0,\omega)}\\[5pt]
\dsp\leq C(\|\widehat{f}_1\|_{L^2(0,\omega)} + \sqrt{\eps}\|\widehat{f}_2\|_{L^2(0,\omega)}). 
\end{array}
\ee
Indeed, by setting $v=\widehat{u}_\eps$ and $\mu=\widehat{\la}_\eps$ in (\ref{weak_1d}), we obtain
\[
\begin{array}{ll}
&\dsp\eps (\|d_\theta \widehat{u}_\eps\|^2_{L^2(0,\omega)} + |\la|^2 \|\widehat{u}_\eps\|^2_{L^2(0,\omega)})
+ \|d_\theta \widehat{\la}_\eps\|^2_{L^2(0,\omega)} + |\la|^2 \|\widehat{\la}_\eps\|^2_{L^2(0,\omega)}\\[8pt]
=&\dsp\int_0^\omega \eps (\widehat{f}_1+ \widehat{f}_2)\overline{\widehat{u}_\eps}\,d\theta-\int_0^\omega (\overline{\widehat{f}_1}- \eps \overline{\widehat{f}_2})\widehat{\la}_\eps\,d\theta\\[8pt]
\leq & \dsp(\eps \|\widehat{f}_1+\widehat{f}_2\|_{L^2(0,\omega)}^2+ \|\widehat{f}_1-\eps\widehat{f}_2\|_{L^2(0,\omega)}^2)^{1/2} (\eps \|\widehat{u}_\eps\|_{L^2(0,\omega)}^2+ \|\widehat{\la}_\eps\|_{L^2(0,\omega)}^2)^{1/2}\\[6pt]
\leq & \sqrt{1+\eps}\,(\|\widehat{f}_1\|_{L^2(0,\omega)} + \sqrt{\eps}\|\widehat{f}_2\|_{L^2(0,\omega)})(\eps \|\widehat{u}_\eps\|^2_{L^2(0,\omega)} + \|\widehat{\la}_\eps\|^2_{L^2(0,\omega)}))^{1/2}.
\end{array}
\]
By using the Poincar\'e inequality and assuming that $\eps \leq 1$ we obtain that
\[\sqrt{\eps} \|\widehat{u}_\eps\|_{H^1(0,\omega)} + \|\widehat{\la}_\eps\|_{H^1(0,\omega)} \leq C(\|\widehat{f}_1\|_{L^2(0,\omega)} + \sqrt{\eps} \|\widehat{f}_2\|_{L^2(0,\omega)}) \]
and
\[|\la|^2(\sqrt{\eps} \|\widehat{u}_\eps\|_{L^2(0,\omega)} + \|\widehat{\la}_\eps\|_{L^2(0,\omega)}) \leq C(\|\widehat{f}_1\|_{L^2(0,\omega)} + \sqrt{\eps} \|\widehat{f}_2\|_{L^2(0,\omega)}),\]
where $C$ is independent of $\la$ and $\eps$.
Now, given that
\[d^2_\theta \widehat{u}_\eps  = |\la|^2 \widehat{u}_\eps -\widehat{f}_1,\qquad\qquad
d^2_\theta \widehat{\la}_\eps  = |\la|^2 \widehat{\la}_\eps -\eps\widehat{f}_2,\]
we deduce
\[
\begin{array}{ll}
&\sqrt{\eps}\|d^2_\theta \widehat{u}_\eps\|_{L^2(0,\omega)}+\|d^2_\theta \widehat{\la}_\eps\|_{L^2(0,\omega)}\\[2pt]
\leq &|\la|^2(\sqrt{\eps} \|\widehat{u}_\eps\|_{L^2(0,\omega)} + \|\widehat{\la}_\eps\|_{L^2(0,\omega)})+ C (\|\widehat{f}_1\|_{L^2(0,\omega)} +\sqrt{\eps} \|\widehat{f}_2\|_{L^2(0,\omega)})\\[2pt]
\leq &C(\|\widehat{f}_1\|_{L^2(0,\omega)} + \sqrt{\eps}\|\widehat{f}_2\|_{L^2(0,\omega)}),
\end{array}
\]
which implies (\ref{estimate_qr}). Finally, we have for all $\la=i\tau$
\[\sqrt{\eps} \|\widehat{u}_\eps(\la,\cdot)\|_{H^2(0,\omega,\la)} \leq C(\|\widehat{f}_1\|_{L^2(0,\omega)} + \sqrt{\eps} \|\widehat{f}_2\|_{L^2(0,\omega)})\]
and
\[ \|\widehat{\la}_\eps(\la,\cdot)\|_{H^2(0,\omega,\la)} \leq C(\|\widehat{f}_1\|_{L^2(0,\omega)} + \sqrt{\eps}\|\widehat{f}_2\|_{L^2(0,\omega)}),\]
which by integration on $\ell_{0}$ and by definition of the norms $\|\cdot\|_{\beta,m}$ (see (\ref{norm})) implies
\[\sqrt{\eps} \|u_\eps\|_{0,2} + \|\la_\eps\|_{0,2} \leq C (\|f_1\|_{0,0}+\sqrt{\eps}\|f_2\|_{0,0}).\]
This gives the estimate
\[\sqrt{\eps} \|u_\eps\|_{H^2(B)} + \|\la_\eps\|_{H^2(B)} \leq C (\|f_1\|_{L^2(B)}+ \sqrt{\eps}\|f_2\|_{L^2(B)})\]
which proves that $\mathcal{B}_0$ is an isomorphism.
\end{proof}
\noindent Now we wish to extend the result of Theorem \ref{th_0qr} to any $\beta \notin \{(\pi/2+n\pi)/\om,\,n\in\mathbb{Z}\}$.
\begin{theorem}
\label{th_betaqr}
For any $\beta \notin \{(\pi/2+n\pi)/\om,\,n\in\mathbb{Z}\}$, the operator $\mathcal{B}_\beta$ is an isomorphism.
Furthermore, there exists a constant $C>0$ independent of $\eps$ such that for all
$(f_1,f_2) \in \mathcal{R}(\mathcal{B}_\beta)$, the solution $(u_\eps,\la_\eps) \in \mathcal{D}(\mathcal{B}_\beta)$ to the problem 
(\ref{Pr_cauchy_strip}) (which depends on $\beta$) satisfies
\begin{equation}\label{EstimateUniform_B}
\sqrt{\eps} \|u_\eps\|_{\beta,2}+\|\la_\eps\|_{\beta,2} \leq C\,(\|f_1\|_{\beta,0}+\sqrt{\eps}\|f_2\|_{\beta,0}).
\end{equation}
\end{theorem}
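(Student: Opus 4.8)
The plan is to reproduce the scheme of the proof of Theorem~\ref{th_0qr}, but now performing the Laplace transform along the line $\ell_{-\beta}=\{\la=-\beta+i\tau,\ \tau\in\R\}$ rather than along $\ell_0$. Applying $\mathcal{L}$ in the variable $t$ to problem (\ref{Pr_cauchy_strip}), posed in the weighted spaces $W_\beta^m(B)$, and invoking the norm equivalence (\ref{norm})--(\ref{norm_param}) together with Parseval's identity, I would reduce the statement to a study of the parametrised one-dimensional symbol $\mathscr{L}_\eps(\la)$ for $\la\in\ell_{-\beta}$. By Lemma~\ref{eigenvalue_qr} every eigenvalue of $\mathscr{L}_\eps$ has real part in $\{(\pi/2+n\pi)/\omega,\ n\in\Z\}$; since that set is symmetric about $0$, the hypothesis on $\beta$ guarantees that $\ell_{-\beta}$ carries no eigenvalue. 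The point that will make the estimates uniform is that the distance from $\ell_{-\beta}$ to the set of eigenvalues of $\mathscr{L}_\eps$ is bounded below by a positive constant depending only on $\mathrm{dist}(\beta,\{(\pi/2+n\pi)/\omega\})$, hence independent of $\eps$, because the real parts of the eigenvalues do not move with $\eps$ (only their imaginary parts $\pm\ln\gamma_\eps/\omega$ do).

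Next I would observe that, for each fixed $\la\in\ell_{-\beta}$, the operator $\mathscr{L}_\eps(\la):\mathcal{D}(\mathscr{L}_\eps)\to\mathcal{R}(\mathscr{L}_\eps)$ differs from $\mathscr{L}_\eps(\la_0)$ --- for some $\la_0$ on the imaginary axis, which is invertible by the Lax--Milgram argument of the proof of Theorem~\ref{th_0qr} --- by the multiplication operator $(\varphi,\psi)\mapsto\big(-(\la^2-\la_0^2)\varphi,\,-\eps^{-1}(\la^2-\la_0^2)\psi\big)$, which is compact from $\mathcal{D}(\mathscr{L}_\eps)$ into $\mathcal{R}(\mathscr{L}_\eps)=L^2(0,\omega)\times L^2(0,\omega)$ by Rellich's theorem. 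Hence $\mathscr{L}_\eps(\la)$ is Fredholm of index $0$ for every $\la$, and it is an isomorphism precisely when $\la$ is not an eigenvalue, i.e. on all of $\ell_{-\beta}$.

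The heart of the proof is then the a priori estimate $\sqrt{\eps}\,\|\widehat u_\eps\|_{H^2(0,\omega;\la)}+\|\widehat\la_\eps\|_{H^2(0,\omega;\la)}\le C\big(\|\widehat f_1\|_{L^2(0,\omega)}+\sqrt{\eps}\,\|\widehat f_2\|_{L^2(0,\omega)}\big)$ with $C$ independent of $\la\in\ell_{-\beta}$ and of $\eps\in(0,1]$. For $|{\rm Im}\,\la|$ large I would argue as in Theorem~\ref{th_0qr}: ${\rm Re}\,(-\la^2)=\tau^2-\beta^2$ then dominates $|{\rm Im}\,\la^2|=2|\beta||\tau|$, the energy computation (test with $(\widehat u_\eps,\widehat\la_\eps)$, subtract the two equations, take real parts) goes through, and the only new, non sign-definite term $-4\beta\tau\,{\rm Im}\int_0^\omega\overline{\widehat u_\eps}\,\widehat\la_\eps\,d\theta$ is absorbed by the $(\tau^2-\beta^2)$--terms, with $\eps$-uniform constants. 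For $|{\rm Im}\,\la|$ bounded I would instead resolve $\mathscr{L}_\eps(\la)(\widehat u_\eps,\widehat\la_\eps)=(\widehat f_1,\widehat f_2)$ semi-explicitly: each component solves a Helmholtz ODE on $(0,\omega)$, and matching the coupled boundary conditions at $\theta=0$ and $\theta=\omega$ produces a $2\times2$ linear system whose determinant is $1+\eps\cos^2(\la\omega)$, consistently with Lemma~\ref{eigenvalue_qr}; since $|1+\eps\cos^2(\la\omega)|$ is bounded below on $\ell_{-\beta}$ by a constant depending only on $\mathrm{dist}(\beta,\{(\pi/2+n\pi)/\omega\})$ --- again $\eps$-independent --- elementary ODE estimates deliver the bound, the powers of $\eps$ falling into place because $\eps$ multiplies $\widehat f_2$ in (\ref{Pr_cauchy_strip}) and $\eps\le\sqrt{\eps}$ for $\eps\in(0,1]$.

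I expect the main obstacle to be bridging these two regimes: when $\eps$ is small there is an intermediate band of $|{\rm Im}\,\la|$ in which neither the naive semi-explicit bound (there the hyperbolic growth $\cosh(\tau\omega)$ of the Helmholtz kernels overwhelms the favourable powers of $\eps$) nor the plain energy estimate (the bad coupling term is not yet controlled by $\tau^2-\beta^2$) yields by itself an $\eps$-uniform constant. Closing this band calls for a sharper, suitably rescaled analysis of $\mathscr{L}_\eps(\la)^{-1}$ built on the explicit eigenfunctions of Lemma~\ref{eigenvalue_qr}, which is precisely the kind of technical input gathered in the two appendices. Once the pointwise estimate holds uniformly, I would integrate it over $\ell_{-\beta}$ and use (\ref{norm})--(\ref{norm_param}) again to obtain (\ref{EstimateUniform_B}), which shows $\mathcal{B}_\beta$ to be injective with closed range; surjectivity would then follow by setting $(u_\eps,\la_\eps):=\mathcal{L}^{-1}\big(\mathscr{L}_\eps(\cdot)^{-1}(\widehat f_1,\widehat f_2)\big)$ for $(f_1,f_2)\in\mathcal{R}(\mathcal{B}_\beta)$ and checking, via the uniform symbol bound and the norm equivalence, that $(u_\eps,\la_\eps)\in\mathcal{D}(\mathcal{B}_\beta)$ solves (\ref{Pr_cauchy_strip}). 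Hence $\mathcal{B}_\beta$ is an isomorphism.
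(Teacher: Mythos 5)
Your reduction of the theorem to a uniform resolvent estimate for the symbol $\mathscr{L}_\eps(\la)$ on the line $\ell_{-\beta}$, and your Fredholm argument for the pointwise invertibility, match the paper's strategy. But the crux of the theorem is precisely the estimate that is uniform in \emph{both} $\eps\in(0,1]$ and $\la$ on the line, and your proposal does not establish it. Your two regimes do not cover the line uniformly in $\eps$: in the energy computation the bad coupling term of size $|\beta\tau|\,\|\widehat u_\eps\|\,\|\widehat\la_\eps\|$ must be absorbed into $\eps(\tau^2-\beta^2)\|\widehat u_\eps\|^2+(\tau^2-\beta^2)\|\widehat\la_\eps\|^2$, and any Young splitting forces a threshold $|\tau|\gtrsim \eps^{-1/2}$ (so the constant is \emph{not} $\eps$-uniform on a fixed set of large $|\tau|$), while the semi-explicit argument with the lower bound on $|1+\eps\cos^2(\la\om)|$ by compactness only covers an $\eps$-independent bounded set of $|\tau|$. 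You candidly acknowledge the resulting intermediate band and defer its treatment to ``the kind of technical input gathered in the two appendices''; that deferred step is exactly the content of the proof, so the argument as written has a genuine gap.

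For comparison, the paper does not split by the size of $|\tau|$ at all. It decomposes $(\widehat u_\eps,\widehat\la_\eps)=(u_0,\la_0)+(u_\sharp,\la_\sharp)$, where $u_0$ and $\la_0$ solve \emph{decoupled} problems (Dirichlet at one end, Neumann at the other) whose uniform-in-$\la$ estimates are classical (Appendix A, Proposition \ref{Estimate_appendix}) and carry no $\eps$-degeneracy beyond the factor $\eps$ in front of $\hat f_2$. The corrector $(u_\sharp,\la_\sharp)$ is then written in closed form, with amplitudes of the type $\big(d_\theta u_0(0)+\cos(\la\om)\,d_\theta\la_0(\om)\big)/\big(\la(1+\eps\cos^2(\la\om))\big)$, and the entire $\eps$-uniformity is carried by the two global-in-$\tau$ inequalities of Appendix B,
\[
\frac{e^{2|\tau|\om}}{|1+\eps\cos^2(\la\om)|^2}\le \frac{C}{\eps},
\qquad
\frac{\eps^2 e^{4|\tau|\om}}{|1+\eps\cos^2(\la\om)|^2}\le C ,
\]
combined with boundary-trace bounds such as $|d_\theta u_0(0)|^2\le C\|\hat f_1\|^2_{L^2(0,\om)}/|\la|$ obtained by pairing the equation with $\cos(\la(\theta-\om))$. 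These inequalities hold for all $\tau\in\R$ simultaneously and exploit the exact structure of $|1\pm i\sqrt{\eps}\cos(\la\om)|^2$; they are what closes the band you leave open. Without them (or an equivalent quantitative substitute), estimate (\ref{EstimateUniform_B}) is not proved.
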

\begin{proof}
For $(\hat{f_1},\hat{f_2}) \in L^2(0,\omega) \times L^2(0,\omega)$, we consider the problem of finding the functions
$(\hat{u}_\eps,\hat{\la}_\eps)\in H^1_{0,\,0}(0,\omega)\times H^1_{0,\,
\omega}(0,\omega)$ such that
\begin{equation}\label{PbFort}
\left\{\begin{array}{rcll}
\displaystyle-\eps\,(d^2_\theta+\lambda^2)\hat{u}_\eps &=& \eps\,\hat{f_1} &\mbox{ in } (0,\omega)\\[3pt]
\displaystyle -(d^2_\theta+\lambda^2)\hat{\la}_\eps&=&\eps\,\hat{f_2}  &\mbox{ in } (0,\omega)\\[3pt]
\displaystyle d_{\theta}\hat{u}_\eps(0)-d_{\theta}\hat{\la}_\eps(0)&=&0&\\
\displaystyle \eps\,d_{\theta}\hat{u}_\eps(\omega)+d_{\theta}\hat{\la}_\eps(\omega)&=&0.&
\end{array}
\right.
\end{equation}
We wish to prove that there is a constant $C>0$ such that the solution of Problem (\ref{PbFort}) satisfies
\begin{equation}\label{EstimateUniform}
\begin{array}{l}
\dsp\sqrt{\eps}(\|\hat{u}_\eps\|_{H^2(0,\omega)}+|\lambda|^2\|\hat{u}_\eps\|_{L^2(0,\omega)})+\|\hat{\la}_\eps\|_{H^2(0,\omega)}+|\lambda|^2\|\hat{\la}_\eps\|_{L^2(0,\omega)} \\[6pt]
\dsp \le C\,(\|\hat{f_1}\|_{L^2(0,\omega)}+\sqrt{\eps}\|\hat{f_2}\|_{L^2(0,\omega)})
\end{array}
\end{equation}
for all $\eps>0$, $\lambda \in \ell_{\beta}=\{\gamma\in \mathbb{C},\,{\rm Re}\,\gamma=\beta\}$. Note that $C$ depends on $\beta$ but not on $\eps$, $\lambda\in \ell_{\beta}$.
According to  the analytic Fredholm theorem, we know that problem (\ref{PbFort}) admits a unique solution if and only if the only solution for $(\hat{f_1},\hat{f_2})=(0,0)$ is $(\hat{u}_\eps,\hat{\la}_\eps)=(0,0)$, that is if and only if $\la$ is not an eigenvalue of $\mathscr{L}_\eps$. Lemma 
\ref{eigenvalue_qr} guarantees that for $\beta\notin\{(\pi/2+n\pi)/\om,\,n\in\mathbb{Z}\}$, this is indeed the case for all $\eps>0$, $\lambda \in \ell_{\beta}$. \\
\newline
Estimate (\ref{EstimateUniform}) has already been established for $\beta=0$. Now we assume that $\beta\neq 0$. In order to show (\ref{EstimateUniform}), we work with the decomposition $(\hat{u}_\eps,\hat{\la}_\eps)=(u_0,\lambda_0)+(u_{\sharp},\lambda_{\sharp})$, where $u_0\in H^1_{0,\,0}(0,\omega)$ and $\lambda_0\in H^1_{0,\,\omega}(0,\omega)$ are the functions which solve
\[
\left\{
\begin{array}{c}
-(d^2_\theta+\lambda^2)u_0=\hat{f_1} \mbox{ in } (0,\omega)\\
d_\theta u_0(\omega)=0
\end{array}
\right.
\qquad\mbox{ and }\qquad
\left\{
\begin{array}{c}
-(d^2_\theta+\lambda^2)\lambda_0 = \eps\,\hat{f_2} \mbox{ in } (0,\omega)\\
d_\theta \lambda_0(0) = 0.
\end{array}
\right.
\] 
For these classical problems, by using Proposition \ref{Estimate_appendix} in Appendix A, there is a constant $C>0$ such that 
\begin{equation}\label{EstimateUniform0_part_u}
\|d_{\theta}^2u_0\|_{L^2(0,\omega)}+|\lambda|^2\|u_0\|_{L^2(0,\omega)} \leq C\|\hat{f_1}\|_{L^2(0,\omega)}
\end{equation}
and
\begin{equation}\label{EstimateUniform0_part_phi}
\|d_{\theta}^2\lambda_0\|_{L^2(0,\omega)}+|\lambda|^2\|\lambda_0\|_{L^2(0,\omega)} \leq C\eps \|\hat{f_2}\|_{L^2(0,\omega)}
\end{equation}
for all $\lambda\in\ell_{\beta}$ when $\beta\notin\{(\pi/2+n\pi)/\omega,\,n\in\mathbb{Z}\}$. Here and in what follows, the constant $C>0$ may change from a line to another but is independent of $\eps>0$, $\lambda\in\ell_{\beta}:=\{\gamma\in\mathbb{C},\,{\rm Re}\,\gamma=\beta\}$.\\
\newline
Now, we see that $(u_{\sharp},\lambda_{\sharp})\in H^1_{0,\,0}(0,\omega) \times H^1_{0,\,\omega}(0,\omega)$ satisfies
\begin{equation}\label{PbFortSharp}
\left\{\begin{array}{l}
(d^2_\theta+\lambda^2)u_{\sharp}=0\mbox{ in } (0,\omega)\\[3pt]
(d^2_\theta+\lambda^2)\lambda_{\sharp}=0\mbox{ in } (0,\omega)\\[3pt]
d_{\theta}u_{\sharp}(0)-d_{\theta}\lambda_{\sharp}(0)=-d_{\theta}u_0(0)\\[3pt]
\eps\,d_{\theta}u_{\sharp}(\omega)+d_{\theta}\lambda_{\sharp}(\omega)=-d_{\theta}\lambda_{0}(\omega).
\end{array}
\right.
\end{equation}
Looking for $u_{\sharp}$, $\lambda_{\sharp}$  of the form $u_{\sharp}(\theta)=A\,\sin(\lambda\theta)$, $\lambda_{\sharp}(\theta)=B\,\sin(\lambda(\theta-\omega))$, we find that $A$ and $B$ must solve the problem
\[
\left(\begin{array}{cc}
\lambda & -\lambda\,\cos(\lambda\omega)\\ \eps\,\lambda\,\cos(\lambda\om) & \lambda
\end{array}\right)\left(\begin{array}{c}
A \\ B
\end{array}\right)=\left(\begin{array}{c}
-d_{\theta}u_0(0) \\ -d_{\theta}\lambda_{0}(\omega)
\end{array}\right).
\]
We deduce that 
\[
u_{\sharp}(\theta)=-\cfrac{ d_{\theta}u_0(0)+\cos(\lambda\omega)d_{\theta}\lambda_0(\omega)}{\lambda(1+\eps\,\cos^2(\lambda\omega))}\,\sin(\lambda\theta)\]
and
\[\lambda_{\sharp}(\theta)=\cfrac{\eps\cos(\lambda\omega) d_{\theta}u_0(0)-d_{\theta}\lambda_0(\omega)}{\lambda(1+\eps\,\cos^2(\lambda\omega))}\,\sin(\lambda(\theta-\omega)).
\]
From identity (\ref{RelationSin}) of Appendix B,
we have
$|\sin(\lambda\theta)|^2=(\cosh(2\tau\theta)-\cos(2\beta\theta))/2$, for $\lambda=\beta+i\tau$. We can write
\begin{equation}\label{EstimationInterm1}
\|u_{\sharp}\|_{L^2(I)}^2 =\bigg|\cfrac{ d_{\theta}u_0(0)+\cos(\lambda\omega)d_{\theta}\lambda_0(\omega)}{2\lambda(1+\eps\,\cos^2(\lambda\omega))}\bigg|^2(\tau^{-1}\sinh(2\tau\omega)-\beta^{-1}\sin(2\beta\omega)).
\end{equation}
Since $\beta\ne0$, one can verify that there is $C>0$ such that, for all $\tau\in\mathbb{R}$, we have
\begin{equation}\label{EstimationInterm2Equiv}\beta^{-1}\sin(2\beta\omega)= |\beta|^{-1}\sin(2|\beta|\omega) \leq 2\omega \leq |\tau|^{-1}\sinh(2|\tau|\omega)=\tau^{-1}\sinh(2\tau\omega) \leq C\,e^{2|\tau|\om}/|\lambda|.
\end{equation}
Using (\ref{EstimationInterm2Equiv}) in (\ref{EstimationInterm1}), we obtain
\begin{equation}\label{EstimationInterm2}
|\lambda|^4\|u_{\sharp}\|_{L^2(I)}^2
\le  C\,|\lambda|\,\left(\cfrac{ |d_{\theta}u_0(0)|^2}{|1+\eps\,\cos^2(\lambda\om)|^2}+\cfrac{ \,|\cos(\lambda\om)d_{\theta}\lambda_0(\om)|^2}{|1+\eps\,\cos^2(\lambda\om)|^2}\right)\,e^{2|\tau|\om}.
\end{equation}
Now we explain how to obtain estimates for $|d_{\theta}u_0(0)|$ and $|\cos(\lambda\omega)d_{\theta}\lambda_0(\omega)|$.\\ 
\newline
$\star$ First we multiply the equation $-(d^2_\theta+\lambda^2)u_0=\hat{f_1}$ in $(0,\omega)$ by $\cos(\lambda(\theta-\omega))$ and integrate by parts. This gives us
\[
-d_{\theta}u_0(0)\cos(\lambda\om)=\int_0^\omega d^2_{\theta}u_0\cos(\lambda(\theta-\omega))-u_0d^2_{\theta}(\cos(\lambda(\theta-\omega)))\,d\theta=-\int_0^\omega \hat{f_1}\cos(\lambda(\theta-\omega))\,d\theta
\]
and leads to 
\begin{equation}\label{EstimationInterm3}
|\cos(\lambda\omega)d_{\theta}u_0(0)|^2 \leq \|\hat{f_1}\|^2_{L^2(0,\omega)}\|\cos(\lambda(\theta-\omega))\|^2_{L^2(0,\omega)}.
\end{equation}
An analogous computation to what precedes (\ref{EstimationInterm1}), based on Identity (\ref{RelationCos}) of Appendix B, yields 
\[\|\cos(\lambda(\theta-\omega))\|^2_{L^2(0,\omega)}= (\tau^{-1}\sinh(2\tau\omega)+\beta^{-1}\sin(2\beta\omega))/4.\]
Using the latter result as well as (\ref{EstimationInterm2Equiv}), we get
\begin{equation}\label{EstimationInterm4}
|d_{\theta}u_0(0)|^2 \leq C\,\|\hat{f_1}\|^2_{L^2(I)}\,e^{2|\tau|\omega}/(|\lambda|\,|\cos(\lambda\omega)|^2).
\end{equation}
From Identity (\ref{RelationCos}) and by using the fact that $\beta \notin \{(\pi/2+n\pi)/\omega,\, n \in \mathbb{Z}\}$, one can check that there is a constant $C>0$ such that $e^{2|\tau|\omega}/ |\cos(\lambda\omega)|^2\le C$ for all $\tau\in\mathbb{R}$. We deduce from (\ref{EstimationInterm4}) that
\begin{equation}\label{EstimationInterm5}
|d_{\theta}u_0(0)|^2 \le C\,\|\hat{f_1}\|^2_{L^2(0,\omega)}/|\lambda|.
\end{equation}
$\star$ Now, we provide an estimate for $|\cos(\lambda\omega)d_{\theta}\lambda_0(\omega)|$. Multiplying the equation $-(d^2_\theta+\lambda^2)\lambda_0=\eps\,\hat{f_2}$ in $(0,\omega)$ by $\cos(\lambda\theta)$ and integrating by parts, we find
\[
d_{\theta}\lambda_0(\omega)\cos(\lambda\om)=\int_0^\omega d^2_{\theta}\lambda_0\cos(\lambda\theta)-\lambda_0 d^2_{\theta}(\cos(\lambda\theta))\,d\theta=-\eps\int_0^\omega\hat{f_2}\cos(\lambda\theta)\,d\theta.
\]
Working as above, this allows us to write
\begin{equation}\label{EstimationInterm4Bis}
|d_{\theta}\lambda_0(\omega)\cos(\lambda\omega)|^2\le C\,\eps^2\,\|\hat{f_2}\|^2_{L^2(0,\omega)}e^{2|\tau|\omega}/|\lambda|.
\end{equation}
In Lemmas \ref{lemmaEstimate} and \ref{lemmaEstimate2} proved in Appendix B, we get the following estimates
\begin{equation}\label{Estimations}
\cfrac{e^{2|\tau|\omega}}{|1+\eps\,\cos^2(\lambda\omega)|^2}\leq C/\eps, \qquad\qquad \frac{\eps^2 e^{4|\tau|\omega}}{|1+\eps\,\cos^2(\lambda\omega)|^2} \leq C,
\end{equation}
where again $C>0$ is independent of $\eps>0$, $\lambda=\beta+i\tau\in\ell_{\beta}$. Therefore, inserting (\ref{EstimationInterm4}) as well as (\ref{EstimationInterm4Bis}) in (\ref{EstimationInterm2}) and using (\ref{Estimations}), 
we obtain $\sqrt{\eps}|\lambda|^2\|u_{\sharp}\|_{L^2(0,\omega)} \le C\,(\|\hat{f_1}\|_{L^2(0,\omega)}+\sqrt{\eps}\|\hat{f_2}\|_{L^2(0,\omega)})$. Since $\|d_{\theta}^2u_{\sharp}\|_{L^2(0,\omega)}=|\lambda|^2\|u_{\sharp}\|_{L^2(0,\omega)}$, we infer 
\begin{equation}\label{Estimate1}
\sqrt{\eps}(\|u_{\sharp}\|_{H^2(0,\omega)}+|\lambda|^2\|u_{\sharp}\|_{L^2(0,\omega)}) \le C\,(\|\hat{f_1}\|_{L^2(0,\omega)}+\sqrt{\eps}\|\hat{f_2}\|_{L^2(0,\omega)}).
\end{equation}
Now, let us derive a similar estimate for $\lambda_\sharp$.
From the equation before (\ref{EstimationInterm1}), we have
\begin{equation}\label{EstimationInterm1Phi}
\|\lambda_{\sharp}\|_{L^2(0,\omega)}^2 =\bigg|\cfrac{\eps\cos(\lambda\omega) d_{\theta}u_0(0)-d_{\theta}\lambda_0(\omega)}{4\lambda(1+\eps\,\cos^2(\lambda\omega))}\bigg|^2(\tau^{-1}\sinh(2\tau\omega)-\beta^{-1}\sin(2\beta\omega)).
\end{equation}
We infer
\[
|\lambda|^4\|\lambda_{\sharp}\|_{L^2(0,\omega)}^2 \le C\,|\lambda|\,\left(\cfrac{ \eps^2|\cos(\lambda\omega)d_{\theta}u_0(0)|^2}{|1+\eps\,\cos^2(\lambda\omega)|^2}+\cfrac{ \,|d_{\theta}\lambda_0(\omega)|^2}{|1+\eps\,\cos^2(\lambda\omega)|^2}\right)\,e^{2|\tau|\omega}.
\]
Working as in (\ref{EstimationInterm4}) and (\ref{EstimationInterm4Bis}), we find
\[
|\cos(\lambda\omega)d_{\theta}u_0(0)|^2 \le C\,\|\hat{f_1}\|^2_{L^2(0,\omega)}\,e^{2|\tau|\omega}/|\lambda|
\]
and
\[
|d_{\theta}\lambda_0(\omega)|^2=C\,\eps^2\,\|\hat{f_2}\|^2_{L^2(0,\omega)}e^{2|\tau|\omega}/(|\lambda|\,|\cos(\lambda\omega)|^2) \le C\,\eps^2\,\|\hat{f_2}\|^2_{L^2(0,\omega)}/|\lambda|.
\]
By using again Lemmas \ref{lemmaEstimate} and \ref{lemmaEstimate2} of Appendix B, we deduce that $|\lambda|^2\|\lambda_{\sharp}\|_{L^2(0,\omega)} \leq C\,(\|\hat{f_1}\|_{L^2(0,\omega)}+\sqrt{\eps}\|\hat{f_2}\|_{L^2(0,\omega)})$. Since $\|d_{\theta}^2\lambda_{\sharp}\|_{L^2(0,\omega)}=|\lambda|^2\|\lambda_{\sharp}\|_{L^2(0,\omega)}$, we infer 
\begin{equation}\label{Estimate2}
\|\lambda_{\sharp}\|_{H^2(0,\omega)}+|\lambda|^2\|\lambda_{\sharp}\|_{L^2(0,\omega)} \le C\,(\|\hat{f_1}\|_{L^2(0,\omega)}+\sqrt{\eps}\|\hat{f_2}\|_{L^2(0,\omega)}).
\end{equation}
From the decomposition $(\hat{u}_\eps,\hat{\la}_\eps)=(u_0,\lambda_0)+(u_{\sharp},\lambda_{\sharp})$, using estimates (\ref{EstimateUniform0_part_u}), (\ref{EstimateUniform0_part_phi}), (\ref{Estimate1}) and (\ref{Estimate2}), we finally obtain 
\[
\sqrt{\eps}(\|\hat{u}_\eps\|_{H^2(0,\omega)}+|\lambda|^2\|\hat{u}_\eps\|_{L^2(0,\omega)})+\|\hat{\la}_\eps\|_{H^2(0,\omega)}+|\lambda|^2\|\hat{\la}_\eps\|_{L^2(0,\omega)} \le C\,(\|\hat{f_1}\|_{L^2(0,\omega)}+\sqrt{\eps}\|\hat{f_2}\|_{L^2(0,\omega)}).
\]
It remains to integrate the above estimate on $\ell_{-\beta}$ following the definition of the norm $\|\cdot\|_{\beta,0}$ given by (\ref{norm}).
\end{proof}
\noindent We now consider a problem in the infinite cone $K$ of vertex $S$ and angle $\omega$ which is associated with the problem (\ref{Pr_cauchy_strip}) in the strip via the change of variable $t=\ln r$. 
For $\beta \in \mathbb{R}$, we define the operator $\mathcal{C}_\beta: \mathcal{D}(\mathcal{C}_\beta) \longrightarrow \mathcal{R}(\mathcal{C}_\beta)$ such that $(f_1,f_2)=\mathcal{C}_\beta(u_\eps,\la_\eps)$ with
\be (f_1,f_2)=(-\Delta u_\eps,-\Delta \la_\eps/\eps) \label{problem_C}\ee
\[
\begin{array}{llcl}
\mbox{ and } & \mathcal{D}(\mathcal{C}_\beta)&=&\{(u_\eps,\la_\eps) \in \mathring{V}^1_{\beta-1,0}(K) \cap V_\beta^{2}(K) \times \mathring{V}^1_{\beta-1,\omega}(K) \cap  V_\beta^{2}(K),\\
&&&\ \partial_\nu u_\eps-\partial_\nu \la_\eps=0\,\,{\rm on}\,\, \partial K_0,\quad \eps \partial_\nu u_\eps+ \partial_\nu \la_\eps =0 \,\,{\rm on}\,\, \partial K_\omega\}\\[3pt]
&\mathcal{R}(\mathcal{C}_\beta)&=&V_\beta^{0}(K) \times V_\beta^{0}(K) .
\end{array}
\]
We have the following corollary to Theorem \ref{th_betaqr}.
\begin{corollary}
\label{equiv_B_C}
If $\beta-1 \notin \{(\pi/2+n\pi)/\omega,\,n\in\mathbb{Z}\}$, then the operator $\mathcal{C}_\beta$ is an isomorphism.
Moreover, there exists a constant $C>0$ such that for all
$(f_1,f_2) \in \mathcal{R}(\mathcal{C}_\beta)$, the solution $(u_\eps,\la_\eps) \in \mathcal{D}(\mathcal{C}_\beta)$ to problem (\ref{problem_C})
satisfies
\begin{equation}\label{EstimateUniform_C}
\sqrt{\eps}\|u_\eps\|_{V^2_{\beta}(K)}+\|\la_\eps\|_{V^2_{\beta}(K)} \leq C\,(\|f_1\|_{V^0_{\beta}(K)}+\sqrt{\eps}\|f_2\|_{V^0_{\beta}(K)}).
\end{equation}
\end{corollary}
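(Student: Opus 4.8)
\emph{Plan.} The statement will be obtained by transporting Theorem \ref{th_betaqr}, which lives on the strip $B$, to the cone $K$ through the change of variables $t=\ln r$ studied at the end of Section \ref{SectionWeightedSobo}. Recall that this change of variables yields, via the map $\mathcal{E}$ defined there, an isomorphism between $V^m_\gamma(K)$ and $W^m_{\gamma-m+1}(B)$ for every $m\in\mathbb{N}$ and $\gamma\in\mathbb{R}$, with equivalence of the corresponding norms. The first step is to compute how the cone Laplacian is conjugated: writing $\Delta=r^{-2}((r\partial_r)^2+\partial_\theta^2)$ and using $r\partial_r v=\partial_t(\mathcal{E}v)$, one gets $\mathcal{E}(-\Delta u)=e^{-2t}(-\partial_t^2-\partial_\theta^2)(\mathcal{E}u)$, i.e.\ up to the positive factor $e^{-2t}$ the cone Laplacian becomes the flat Laplacian of the strip. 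Similarly the conormal derivatives on $\partial K_0$ and $\partial K_\omega$, which are $\mp r^{-1}\partial_\theta$, are carried onto harmless multiples of $\mp\partial_\theta$ on $\partial B_0$ and $\partial B_\omega$, so that the boundary conditions defining $\mathcal{D}(\mathcal{C}_\beta)$ are mapped exactly onto those defining $\mathcal{D}(\mathcal{B}_{\beta-1})$.

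Next I would make the weight bookkeeping precise. If $(u_\eps,\la_\eps)\in\mathcal{D}(\mathcal{C}_\beta)$, then applying $\mathcal{E}$ with $m=1$ and $m=2$ sends $u_\eps\in\mathring V^1_{\beta-1,0}(K)\cap V^2_\beta(K)$ to $\mathcal{E}u_\eps\in\mathring W^1_{\beta-1,0}(B)\cap W^2_{\beta-1}(B)$, and likewise for $\la_\eps$, so that $(\mathcal{E}u_\eps,\mathcal{E}\la_\eps)\in\mathcal{D}(\mathcal{B}_{\beta-1})$. Setting $(f_1,f_2)=\mathcal{C}_\beta(u_\eps,\la_\eps)$ and $\tilde f_i:=e^{2t}\mathcal{E}f_i$, the conjugation formula for $-\Delta$ gives at once $\mathcal{B}_{\beta-1}(\mathcal{E}u_\eps,\mathcal{E}\la_\eps)=(\tilde f_1,\tilde f_2)$; and the definitions of the weighted norms yield $\|\tilde f_i\|_{\beta-1,0}=\|e^{(\beta+1)t}\mathcal{E}f_i\|_{L^2(B)}=\|f_i\|_{V^0_\beta(K)}$, together with $\|\mathcal{E}u_\eps\|_{\beta-1,2}\simeq\|u_\eps\|_{V^2_\beta(K)}$ and $\|\mathcal{E}\la_\eps\|_{\beta-1,2}\simeq\|\la_\eps\|_{V^2_\beta(K)}$, the equivalence constants depending only on the geometry. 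It is precisely the factor $e^{-2t}$ that makes $\mathcal{R}(\mathcal{C}_\beta)=V^0_\beta(K)$ correspond to $\mathcal{R}(\mathcal{B}_{\beta-1})=W^0_{\beta-1}(B)$ rather than to $W^0_{\beta+1}(B)$.

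Finally, since $\beta-1\notin\{(\pi/2+n\pi)/\omega,\,n\in\mathbb{Z}\}$ by hypothesis, Theorem \ref{th_betaqr} applied with $\beta$ replaced by $\beta-1$ tells us that $(\mathcal{E}u_\eps,\mathcal{E}\la_\eps)\mapsto(\tilde f_1,\tilde f_2)$ is an isomorphism from $\mathcal{D}(\mathcal{B}_{\beta-1})$ onto $\mathcal{R}(\mathcal{B}_{\beta-1})$, with $\sqrt{\eps}\,\|\mathcal{E}u_\eps\|_{\beta-1,2}+\|\mathcal{E}\la_\eps\|_{\beta-1,2}\le C(\|\tilde f_1\|_{\beta-1,0}+\sqrt{\eps}\,\|\tilde f_2\|_{\beta-1,0})$ for a constant $C$ independent of $\eps$. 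Since $\mathcal{E}$ is a bijection mapping $\mathcal{D}(\mathcal{C}_\beta)$ onto $\mathcal{D}(\mathcal{B}_{\beta-1})$ and $\mathcal{R}(\mathcal{C}_\beta)$ onto $\mathcal{R}(\mathcal{B}_{\beta-1})$, conjugating shows that $\mathcal{C}_\beta$ is an isomorphism, and substituting the norm identities above turns the inequality into (\ref{EstimateUniform_C}). The only genuinely delicate point is this weight accounting in the intertwining relation: one must track the $e^{-2t}$ factor and the shift from $\beta$ to $\beta-1$ so that domains, ranges and norms line up exactly; once that is set up, the statement is a transcription of Theorem \ref{th_betaqr}, and in particular the uniformity in $\eps$ is inherited automatically.
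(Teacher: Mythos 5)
Your proposal is correct and follows essentially the same route as the paper: the conjugation identity $r^2\mathcal{C}_\beta=\mathcal{E}^{-1}\mathcal{B}_{\beta-1}\mathcal{E}$ coming from $t=\ln r$, the isomorphisms $V^2_\beta(K)\cong W^2_{\beta-1}(B)$, $\mathring V^1_{\beta-1,\cdot}(K)\cong\mathring W^1_{\beta-1,\cdot}(B)$ and $V^0_\beta(K)\cong W^0_{\beta+1}(B)$, and Theorem \ref{th_betaqr} applied with $\beta-1$ in place of $\beta$. Your explicit tracking of the $e^{2t}$ factor and the resulting norm identity $\|\tilde f_i\|_{\beta-1,0}=\|f_i\|_{V^0_\beta(K)}$ is exactly the weight accounting the paper performs via the map $f\mapsto r^{-2}f$ from $\mathcal{R}(\mathcal{C}_{\beta-2})$ to $\mathcal{R}(\mathcal{C}_\beta)$.
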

\begin{proof}
The equation $-\Delta u=f$ in $K$ writes in polar coordinates
\[-((r\partial_r)^2+\partial^2_\theta)u=r^2f,
\]
which by using the operator $\mathcal{E}$ implies that
\[r^2 \mathcal{C}_\beta= \mathcal{E}^{-1} \mathcal{B}_{\beta-1} \mathcal{E}.\]
Indeed the operator $\mathcal{E}$ maps $V_\beta^{2}(K)$ to the space $W^{2}_{\beta-2+1}(B)=W^{2}_{\beta-1}(B)$, the space $\mathring{V}^1_{\beta-1,0}(K)$ to the space $\mathring{W}^1_{\beta-1,0}(B)$ and the space $\mathring{V}^1_{\beta-1,\omega}(K)$ to the space $\mathring{W}^1_{\beta-1,\omega}(B)$, which implies that
$\mathcal{E}$ is an isomorphism from $\mathcal{D}(\mathcal{C}_\beta)$ to $\mathcal{D}(\mathcal{B}_{\beta-1})$.
In addition, the operator $\mathcal{B}_{\beta-1}$ is an isomorphism 
if $\beta-1 \notin \{(\pi/2+n\pi)/\omega,\,n\in\mathbb{Z}\}$.
Lastly, the operator $\mathcal{E}^{-1}$ maps the space $W^{0}_{\beta-1}(B)$ to the space $V^0_{\beta-2}(K)$, which implies that 
$\mathcal{E}^{-1}$ is an isomorphism from $\mathcal{R}(\mathcal{B}_{\beta-1})$ to $\mathcal{R}(\mathcal{C}_{\beta-2})$.
It remains to remark that the operator $f \mapsto r^{-2} f$ maps the space $V^0_{\beta-2}(K)$ to the space $V^0_{\beta}(K)$, and is hence an isomorphism from $\mathcal{R}(\mathcal{C}_{\beta-2})$ to $\mathcal{R}(\mathcal{C}_{\beta})$. This completes the proof of the first part.\\
The estimate relies again on the identity $r^2 \mathcal{C}_\beta= \mathcal{E}^{-1} \mathcal{B}_{\beta-1} \mathcal{E}$, on the fact that
$\mathcal{E}$ is an isomorphism from $V^2_{\beta}(K)$ to $W^2_{\beta-1}(B)$, on the estimate (\ref{EstimateUniform_B}) with $\beta$ replaced by $\beta-1$ and of the fact that
$r^{-2}\mathcal{E}^{-1}$ is an isomorphism from $W^0_{\beta-1}(B)$ to $V^0_\beta(K)$.
\end{proof}
\noindent In order to link the solutions of problem (\ref{problem_C}) obtained for different $\beta$, we need to compute the adjoint of the symbol $\mathscr{L}_\eps$ defined in (\ref{DefSymbol}) and to specify its eigenvalues and eigenfunctions.
\begin{lemma}
The adjoint of the symbol $\mathscr{L}_\eps(\la)$ is the symbol
$\mathscr{L}^*_\eps(\la): \mathcal{D}(\mathscr{L}^*_\eps) \rightarrow \mathcal{R}(\mathscr{L}^*_\eps)$ with
\[\mathscr{L}^*_\eps(\la)(g_\eps,h_\eps)=(-(\overline{\la}^2+d^2_\theta)g_\eps,-\frac{1}{\eps}(\overline{\la}^2+d^2_\theta)h_\eps)\]
\[
\begin{array}{llcl}
&\mathcal{D}(\mathscr{L}^*_\eps)&=&\{(g_\eps,h_\eps) \in H^{2}(0,\omega) \times H^{2}(0,\omega),\,d_\theta g _\eps(\omega)=0,\,d_\theta h_\eps(0)=0,\\[2pt]
&&&\quad g_\eps(\omega)- h_\eps(\omega) =0,\,\eps g_\eps(0)+ h_\eps(0)=0\},\\[3pt]
\mbox{ and } & \mathcal{R}(\mathscr{L}^*_\eps)&=&L^2(0,\omega) \times L^2(0,\omega).
\end{array}
\]
\end{lemma}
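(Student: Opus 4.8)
The plan is to obtain $\mathscr{L}^*_\eps(\la)$ by the standard integration-by-parts argument for a coupled Sturm--Liouville system. I regard $\mathscr{L}_\eps(\la)$ as a densely defined operator on the Hilbert space $L^2(0,\omega)\times L^2(0,\omega)$ equipped with the scalar product $\langle(f_1,f_2),(p,q)\rangle=\int_0^\omega(f_1\overline{p}+f_2\overline{q})\,d\theta$, its domain being $\mathcal{D}(\mathscr{L}_\eps)$. The goal is to identify both the formal expression of the adjoint and its domain $\mathcal{D}(\mathscr{L}^*_\eps)$.

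First I would fix $(\varphi_\eps,\psi_\eps)\in\mathcal{D}(\mathscr{L}_\eps)$ and an arbitrary $(g_\eps,h_\eps)\in H^2(0,\omega)\times H^2(0,\omega)$, expand $\langle\mathscr{L}_\eps(\la)(\varphi_\eps,\psi_\eps),(g_\eps,h_\eps)\rangle$ and integrate by parts twice in each of its two integrals $\int_0^\omega-(\la^2+d^2_\theta)\varphi_\eps\,\overline{g_\eps}\,d\theta$ and $\int_0^\omega-\tfrac1\eps(\la^2+d^2_\theta)\psi_\eps\,\overline{h_\eps}\,d\theta$. Since $1/\eps$ and $d^2_\theta$ are real, this yields $\langle(\varphi_\eps,\psi_\eps),(-(\overline{\la}^2+d^2_\theta)g_\eps,-\tfrac1\eps(\overline{\la}^2+d^2_\theta)h_\eps)\rangle$ plus a boundary term $\mathrm{BT}$ collecting the values of $\varphi_\eps,\psi_\eps,g_\eps,h_\eps$ and their first derivatives at $\theta=0,\omega$. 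Using $\varphi_\eps(0)=0$ and $\psi_\eps(\omega)=0$ and then the coupling relations $d_\theta\psi_\eps(0)=d_\theta\varphi_\eps(0)$, $d_\theta\psi_\eps(\omega)=-\eps\,d_\theta\varphi_\eps(\omega)$ coming from $\mathcal{D}(\mathscr{L}_\eps)$, I would rewrite $\mathrm{BT}$ as a combination of the four data $d_\theta\varphi_\eps(0)$, $d_\theta\varphi_\eps(\omega)$, $\varphi_\eps(\omega)$, $\psi_\eps(0)$:
\[\mathrm{BT}=d_\theta\varphi_\eps(0)\big(\overline{g_\eps(0)}+\tfrac1\eps\overline{h_\eps(0)}\big)+d_\theta\varphi_\eps(\omega)\big(\overline{h_\eps(\omega)}-\overline{g_\eps(\omega)}\big)+\varphi_\eps(\omega)\,\overline{d_\theta g_\eps(\omega)}-\tfrac1\eps\,\psi_\eps(0)\,\overline{d_\theta h_\eps(0)}.\]
These four data are free and independent within $\mathcal{D}(\mathscr{L}_\eps)$ (an $H^2(0,\omega)$ function has arbitrary Cauchy data at each endpoint, and the remaining constraints of $\mathcal{D}(\mathscr{L}_\eps)$ only fix $\varphi_\eps(0)$, $\psi_\eps(\omega)$, $d_\theta\psi_\eps(0)$, $d_\theta\psi_\eps(\omega)$ in terms of them), so $\mathrm{BT}$ vanishes for every test pair exactly when $\eps g_\eps(0)+h_\eps(0)=0$, $g_\eps(\omega)-h_\eps(\omega)=0$, $d_\theta g_\eps(\omega)=0$ and $d_\theta h_\eps(0)=0$ --- precisely the conditions in the statement.

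To finish I would check the two inclusions defining the adjoint domain. If $(g_\eps,h_\eps)\in H^2(0,\omega)\times H^2(0,\omega)$ satisfies the four conditions then $\mathrm{BT}=0$, so the displayed identity shows $(g_\eps,h_\eps)\in\mathcal{D}(\mathscr{L}^*_\eps)$ with $\mathscr{L}^*_\eps(\la)(g_\eps,h_\eps)$ equal to the announced formal expression. Conversely, if $(g_\eps,h_\eps)$ lies in the domain of the Hilbert-space adjoint, testing the adjoint relation first against $(\varphi,0)$ and $(0,\psi)$ with $\varphi,\psi\in\mathscr{C}^\infty_0(0,\omega)$ (which do belong to $\mathcal{D}(\mathscr{L}_\eps)$) shows that $-(\overline{\la}^2+d^2_\theta)g_\eps$ and $-\tfrac1\eps(\overline{\la}^2+d^2_\theta)h_\eps$ belong to $L^2(0,\omega)$, hence $g_\eps,h_\eps\in H^2(0,\omega)$; the integration by parts is then legitimate and comparison with the definition of the adjoint forces $\mathrm{BT}=0$ against all of $\mathcal{D}(\mathscr{L}_\eps)$, hence the four boundary conditions. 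The computation itself is elementary; the only things that genuinely need care are the bookkeeping of complex conjugates (this is what turns $\la^2$ into $\overline{\la}^2$) and this last density argument showing that $\mathcal{D}(\mathscr{L}^*_\eps)$ is no larger than the stated set.
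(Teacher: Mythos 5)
Your proof is correct and rests on the same integration-by-parts computation as the paper's; your boundary term $\mathrm{BT}$ and the four resulting conditions on $(g_\eps,h_\eps)$ agree exactly with what the paper obtains. You in fact go one step further: the paper only checks that the boundary terms vanish when $(\varphi_\eps,\psi_\eps)\in\mathcal{D}(\mathscr{L}_\eps)$ and $(g_\eps,h_\eps)$ satisfies the four stated conditions (one inclusion), whereas you also verify, via the independence of the four free boundary data $d_\theta\varphi_\eps(0)$, $d_\theta\varphi_\eps(\omega)$, $\varphi_\eps(\omega)$, $\psi_\eps(0)$ and the interior regularity argument, that the Hilbert-space adjoint domain is no larger than the stated set.
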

\begin{proof}
For $(\varphi,\psi) \in \mathcal{D}(\mathscr{L}_\eps)$ and $(g,h) \in \mathcal{D}(\mathscr{L}^*_\eps)$,
we have by an integration by parts formula
\[\int_0^\omega -(\la^2+ d^2_\theta)\varphi\, \overline{g}\,d\theta+ \int_0^\omega -\frac{1}{\eps}(\la^2+ d^2_\theta)\psi\, \overline{h}\,d\theta\]
\[=\int_0^\omega \varphi \,\, \overline{-(\overline{\la}^2+ d^2_\theta) g}\,d\theta+ \int_0^\omega \psi \,\,\overline{-\frac{1}{\eps}(\overline{\la}^2+ d^2_\theta)h}\,d\theta\]
\[-d_\theta \varphi(\omega) \overline{g}(\omega) + d_\theta \varphi(0) \overline{g}(0) + \varphi(\omega)d_\theta \overline{g}(\omega)-\varphi(0)d_\theta \overline{g}(0)\]
\[-\frac{1}{\eps} d_\theta \psi(\omega) \overline{h}(\omega)+\frac{1}{\eps} d_\theta \psi(0) \overline{h}(0) +\frac{1}{\eps} \psi(\omega)d_\theta \overline{h}(\omega)- \frac{1}{\eps} \psi(0)d_\theta \overline{h}(0).\]
It is readily seen that all the boundary terms vanish due to the boundary conditions satisfied by $(\varphi,\psi)$ and $(g,h)$ at $\theta=0$ and $\theta=\omega$.
This completes the proof.
\end{proof}
\begin{lemma}
\label{eigenvalue_qr_adjoint}
The eigenvalues of the symbol $\mathscr{L}^*_\eps$ are the same as that of $\mathscr{L}_\eps$ and are given by
\[\la^\pm_n=\frac{1}{\omega}\left(\frac{\pi}{2}+n\pi \pm i \ln \gamma_\eps\right),\quad n \in \mathbb{Z},\]
with 
\[\gamma_\eps=\sqrt{1+\frac{1}{\eps}}+\sqrt{\frac{1}{\eps}}.\]
The corresponding (non normalized) eigenfunctions are given by
\[g^\pm_n(\theta)=\cos(\overline{\la^\pm_n}\omega)\cos(\overline{\la^\pm_n} (\theta-\omega)),\qquad\qquad h_n^\pm(\theta)=\cos(\overline{\la^\pm_n} \theta).\]
\end{lemma}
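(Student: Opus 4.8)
The plan is to mimic the proof of Lemma~\ref{eigenvalue_qr}, now working with the adjoint boundary conditions. First I would look for all non-vanishing pairs $(g,h)$ such that $(\overline{\la}^2+d^2_\theta)g=0$ and $(\overline{\la}^2+d^2_\theta)h=0$ in $(0,\omega)$, subject to $d_\theta g(\omega)=0$, $d_\theta h(0)=0$, $g(\omega)-h(\omega)=0$ and $\eps g(0)+h(0)=0$. Writing $\mu=\overline{\la}$, the case $\mu=0$ is immediately excluded: then $g$ and $h$ are affine, the two Neumann conditions force them to be constant, and the two coupling conditions then give $(\eps+1)g(0)=0$, hence $(g,h)=(0,0)$. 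So one may assume $\mu\neq 0$.

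For $\mu\neq 0$, the general solutions of the two ODEs combined with $d_\theta g(\omega)=0$ and $d_\theta h(0)=0$ force $g(\theta)=A\cos(\mu(\theta-\omega))$ and $h(\theta)=B\cos(\mu\theta)$ for some constants $A,B\in\mathbb{C}$ (these are, up to scaling, the unique solutions satisfying the prescribed Neumann condition on the relevant side). Plugging these into the two remaining coupling conditions yields
\[
A-B\cos(\mu\omega)=0, \qquad \eps A\cos(\mu\omega)+B=0,
\]
which is exactly the $2\times 2$ system already encountered in the proof of Lemma~\ref{eigenvalue_qr} (with $\la$ there replaced by $\mu$ here). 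It admits a non-trivial solution if and only if its determinant $1+\eps\cos^2(\mu\omega)$ vanishes, i.e.\ $\cos(\mu\omega)=\pm i/\sqrt{\eps}$.

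From this point the computation is identical to the one in Lemma~\ref{eigenvalue_qr}: solving $e^{i\mu\omega}=\pm i\gamma^\pm_\eps$ with $\gamma^\pm_\eps=\sqrt{1+1/\eps}\pm\sqrt{1/\eps}$ and using $\ln\gamma^-_\eps=-\ln\gamma^+_\eps$, one obtains $\mu\in\{\la^\pm_n,\ n\in\mathbb{Z}\}$. Since $\mu=\overline{\la}$ and since this family is invariant under complex conjugation (indeed $\overline{\la^+_n}=\la^-_n$, as $\ln\gamma_\eps\in\mathbb{R}$), we conclude that $\la$ ranges over the same set $\{\la^\pm_n\}$, which proves that $\mathscr{L}^*_\eps$ and $\mathscr{L}_\eps$ have the same eigenvalues. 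Finally, for such $\la$ I would normalize by taking $B=1$, hence $A=\cos(\mu\omega)$ (the relation $\eps A\cos(\mu\omega)+B=0$ being then automatic since $1+\eps\cos^2(\mu\omega)=0$), which gives the eigenfunctions $g(\theta)=\cos(\mu\omega)\cos(\mu(\theta-\omega))$ and $h(\theta)=\cos(\mu\theta)$ with $\mu=\overline{\la^\pm_n}$, as claimed. There is no serious obstacle here; the only point requiring a little care is the bookkeeping of the conjugation $\mu=\overline{\la}$ together with the symmetry of the eigenvalue set under conjugation.
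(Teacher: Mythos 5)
Your proof is correct and is exactly the adaptation the paper has in mind: the paper omits the argument, stating only that it is ``the same as the proof of Lemma \ref{eigenvalue_qr}'', and you carry out precisely that computation with the adjoint boundary conditions, arriving at the same determinant condition $1+\eps\cos^2(\mu\omega)=0$ and handling the conjugation $\mu=\overline{\la}$ via the symmetry $\overline{\la^+_n}=\la^-_n$ correctly.
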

\noindent The proof of Lemma \ref{eigenvalue_qr_adjoint} is the same as the proof of Lemma \ref{eigenvalue_qr} and is therefore not given.
Lastly, we will need a biorthogonality relationship between the eigenfunctions of $\mathscr{L}_\eps$ and that of $\mathscr{L}^*_\eps$.
\begin{lemma}
\label{biorthogonal}
Assume that $j,k \in \mathbb{Z}$ and $\nu,\mu =\pm$ satisfy either $j+k \neq -1$ or $\mu+\nu \neq 0$.
The eigenfunctions $(\varphi^\pm_n,\psi^\pm_n)$ of $\mathscr{L}_\eps$ and the eigenfunctions $(g^\pm_n,h^\pm_n)$ of $\mathscr{L}^*_\eps$
satisfy
\[\int_0^\omega \varphi^\nu_k \overline{g^\mu_j}\,d\theta + \frac{1}{\eps}\int_0^\omega \psi^\nu_k \overline{h^\mu_j} d\theta= \delta_{\nu\mu}\delta_{kj} d_k, \]
with 
\be d_k=(-1)^{k+1}\frac{\omega}{\eps} \sqrt{1+\frac{1}{\eps}}.\label{dk}\ee
\end{lemma}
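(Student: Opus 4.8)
The plan is to deduce the biorthogonality from the adjoint relation $\langle\mathscr{L}_\eps(\la)(\varphi,\psi),(g,h)\rangle=\langle(\varphi,\psi),\mathscr{L}^*_\eps(\la)(g,h)\rangle$ established above (with the pairing $\langle(p,q),(r,s)\rangle=\int_0^\omega p\,\overline{r}\,d\theta+\int_0^\omega q\,\overline{s}\,d\theta$), and then to compute the diagonal terms directly. Write $a:=\la^\nu_k$ and $b:=\la^\mu_j$, so that $(\varphi^\nu_k,\psi^\nu_k)\in\ker\mathscr{L}_\eps(a)$ and $(g^\mu_j,h^\mu_j)\in\ker\mathscr{L}^*_\eps(b)$, hence $d^2_\theta g^\mu_j=-\overline{b}^2 g^\mu_j$ and $d^2_\theta h^\mu_j=-\overline{b}^2 h^\mu_j$. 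Applying the adjoint relation at $\la=a$ to $(\varphi,\psi)=(\varphi^\nu_k,\psi^\nu_k)$, $(g,h)=(g^\mu_j,h^\mu_j)$, replacing $\mathscr{L}^*_\eps(a)(g^\mu_j,h^\mu_j)$ by $-(\overline{a}^2-\overline{b}^2)(g^\mu_j,\frac{1}{\eps}h^\mu_j)$, and using that $\mathscr{L}_\eps(a)(\varphi^\nu_k,\psi^\nu_k)=0$, one obtains
\[
(b^2-a^2)\Big(\int_0^\omega\varphi^\nu_k\,\overline{g^\mu_j}\,d\theta+\frac{1}{\eps}\int_0^\omega\psi^\nu_k\,\overline{h^\mu_j}\,d\theta\Big)=0.
\]
(Equivalently, this is the Lagrange identity for $-(a^2+d^2_\theta)$ against $-(b^2+d^2_\theta)$ on $(0,\omega)$, whose boundary terms cancel pairwise thanks to the coupling conditions defining $\mathcal{D}(\mathscr{L}_\eps)$ and $\mathcal{D}(\mathscr{L}^*_\eps)$.)

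The key point is that, under the hypothesis that $j+k\ne-1$ or $\mu+\nu\ne0$, one has $a^2\ne b^2$ whenever $(k,\nu)\ne(j,\mu)$. Indeed, using the explicit eigenvalues of Lemma \ref{eigenvalue_qr}, $\la^\nu_k=\la^\mu_j$ forces $k=j$ and, since $\ln\gamma_\eps\ne0$, also $\nu=\mu$; whereas $\la^\nu_k=-\la^\mu_j$ forces $j+k=-1$ together with $\nu+\mu=0$, which is ruled out. Consequently, for $(k,\nu)\ne(j,\mu)$ the displayed identity shows that the pairing is zero, matching $\delta_{\nu\mu}\delta_{kj}d_k=0$.

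It then remains to handle the diagonal case $k=j$, $\nu=\mu$. Writing $\la=\la^\nu_k$ and using $\overline{g^\nu_k}(\theta)=\cos(\la\omega)\cos(\la(\theta-\omega))$ and $\overline{h^\nu_k}(\theta)=\cos(\la\theta)$, I would expand the two products via $2\sin A\cos B=\sin(A+B)+\sin(A-B)$; in both integrands the oscillatory term $\sin(\la(2\theta-\omega))$ has zero mean over $(0,\omega)$, which gives $\int_0^\omega\sin(\la\theta)\cos(\la(\theta-\omega))\,d\theta=\frac{\omega}{2}\sin(\la\omega)$ and $\int_0^\omega\sin(\la(\theta-\omega))\cos(\la\theta)\,d\theta=-\frac{\omega}{2}\sin(\la\omega)$. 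Thus the pairing equals $\frac{\omega}{2}\sin(\la\omega)\big(\cos^2(\la\omega)-\frac{1}{\eps}\big)$. Inserting the values at the eigenvalue — $\cos^2(\la\omega)=-1/\eps$ (the eigenvalue equation) and $\sin(\la\omega)=(-1)^k\cosh(\ln\gamma_\eps)=(-1)^k\sqrt{1+1/\eps}$, the latter from $\cosh(\ln\gamma_\eps)=\frac{1}{2}(\gamma_\eps+\gamma_\eps^{-1})$ and $\gamma^+_\eps\gamma^-_\eps=1$ — yields $d_k=(-1)^{k+1}\frac{\omega}{\eps}\sqrt{1+1/\eps}$, that is (\ref{dk}).

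All steps are elementary computations; I expect the only points requiring genuine care to be (i) the spectral-separation argument, namely checking that the stated hypothesis on $(j,k,\mu,\nu)$ is precisely what excludes $a^2=b^2$ off the diagonal, and (ii) the branch and sign bookkeeping when evaluating $\cos(\la\omega)$ and $\sin(\la\omega)$ at the (complex) eigenvalues, so that the parity factor $(-1)^{k+1}$ and the $\eps$-dependence of $d_k$ emerge correctly.
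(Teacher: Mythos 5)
Your proof is correct and follows essentially the same route as the paper's: the off-diagonal vanishing comes from the Lagrange/adjoint identity combined with the spectral separation $(\la^\nu_k)^2\neq(\la^\mu_j)^2$ that the hypothesis guarantees, and the diagonal constant $d_k$ is obtained from the same elementary integral $\int_0^\omega\sin(\la\theta)\cos(\la(\theta-\omega))\,d\theta=\tfrac{\omega}{2}\sin(\la\omega)$ evaluated at the eigenvalue using $\cos^2(\la\omega)=-1/\eps$ and $\cosh(\ln\gamma_\eps)=\sqrt{1+1/\eps}$. Your explicit check that the hypothesis is precisely what excludes $\la^\nu_k=-\la^\mu_j$ is in fact slightly more careful than the paper, which only treats $k\neq j$, $\nu=\mu=+$ and asserts the remaining cases.
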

\begin{proof}
On the one hand, the assumption $j+k \neq -1$ or $\mu+\nu \neq 0$ is equivalent to $\la_j^\mu \neq -\la_k^\nu$.
Let us first assume that $k\neq j$ and $\nu=\mu=+$, which implies on the other hand that $\la_j^\mu \neq \la_k^\nu$.
Skipping the sign $+$,
we have
\[-\la_k^2 \int_0^\omega \left(\varphi_k \overline{g_j}+ \frac{1}{\eps} \psi_k \overline{h_j}\right)\,d\theta=
\int_0^\omega \left(\Delta \varphi_k \overline{g_j} + \frac{1}{\eps} \Delta \psi_k \overline{h_j}\right)\,d\theta\]
\[=\int_0^\omega \left(\varphi_k \overline{\Delta g_j} + \frac{1}{\eps} \psi_k \overline{\Delta h_j}\right)\,d\theta=-\la_j^2 \int_0^\omega \left(\varphi_k \overline{g_j}+ \frac{1}{\eps} \psi_k \overline{h_j}\right)\, d\theta.\]
Since $\la_j^2 \neq \la_k^2$, this implies that for $j\neq k$ and $\nu=\mu=+$, we have 
\[\int_0^\omega \left(\varphi^\nu_k \overline{g^\mu_j}+ \frac{1}{\eps} \psi^\nu_k \overline{h^\mu_j}\right)\,d\theta=0.\]
We clearly obtain the same result each time that $(k,\nu) \neq (j,\mu)$.
Let us now assume that $k=j$ and $\nu=\mu$.
We have
\[\int_0^\omega \varphi_k^\nu \overline{g^\nu_k}\,d\theta=\cos^2(\la_k^\nu \omega)\int_0^\omega \sin(\la_k^\nu \theta)\cos(\la_k^\nu(\theta-\omega))\,d\theta \]
and
\[\int_0^\omega \psi_k^\nu \overline{h_k^\nu}\,d\theta=\int_0^\omega \sin(\la^\nu_k(\theta-\omega))\cos(\la^\nu_k \theta)\,d\theta=-\int_0^\omega \sin(\la_k^\nu \theta)\cos(\la_k^\nu(\theta-\omega))\,d\theta.\]
Given that $1+\eps\cos^2(\la_k^\nu \omega)=0$, we obtain
\be \int_0^\omega \varphi^\nu_k \overline{g^\nu_k}\,d\theta + \frac{1}{\eps}\int_0^\omega \psi^\nu_k \overline{h^\nu_k} d\theta=-\frac{2}{\eps}
\int_0^\omega \sin(\la_k^\nu \theta)\cos(\la_k^\nu(\theta-\omega))\,d\theta.\label{calcul1}\ee
But a direct calculus gives 
\begin{equation}\label{calcul2}
\begin{array}{lcl}
\dsp\int_0^\omega \sin(\la_k^\nu \theta)\cos(\la_k^\nu(\theta-\omega))\,d\theta&=&\dsp\frac{1}{2}\int_0^\omega \sin(\la_k^\nu(2\theta-\omega))\,d\theta + \frac{1}{2}\int_0^\omega \sin(\la_k^\nu \omega)\,d\theta \\[8pt]
&=&\dsp\frac{1}{2}\,\omega \sin(\la_k^\nu\omega)=\frac{1}{2}\,\omega \sin \left(\frac{\pi}{2}+k\pi +i\nu \ln \gamma_\eps\right)\\[8pt]
&=&\dsp \frac{(-1)^k}{2}\,\omega \cos(i\nu\ln \gamma_\eps)
=\frac{(-1)^k}{2}\,\omega\cosh(\ln \gamma_\eps).
\end{array}
\end{equation}
Since $\gamma_\eps=\sqrt{1+1/\eps}+\sqrt{1/\eps}$, we find
\be \cosh(\ln \gamma_\eps)=\frac{1}{2}\left(\gamma_\eps+\frac{1}{\gamma_\eps}\right)=\sqrt{1+\frac{1}{\eps}}\label{calcul3}.\ee
Using (\ref{calcul3}) and (\ref{calcul2}) in (\ref{calcul1}), we get the desired result.
\end{proof}
\noindent In the next theorem, we compare two solutions of problem (\ref{Pr_cauchy_strip}) associated with two different values of $\beta$.
\begin{theorem}
\label{beta1_beta2}
Assume that $\beta_1<\beta_2$ are two real numbers such that $\beta_j \notin \{(\pi/2+n\pi)/\omega,\,n\in\mathbb{Z}\}$, $j=1,2$.
Let us denote by $\la^\nu_1,\la^\nu_2,\dots,\la^\nu_N$, with $\nu=\pm$, the eigenvalues of $\mathscr{L}_\eps$ in the strip $-\beta_2<{\rm Re}\,\la< -\beta_1$.
For $(f_1,f_2) \in \mathcal{R}(\mathcal{B}_{\beta_1}) \cap \mathcal{R}(\mathcal{B}_{\beta_2})$, the solutions $(u_{\beta_1},\la_{\beta_1}) \in \mathcal{D}(\mathcal{B}_{\beta_1})$ and 
$(u_{\beta_2},\la_{\beta_2}) \in \mathcal{D}(\mathcal{B}_{\beta_2})$ to the problems $\mathcal{B}_{\beta_1} (u_{\beta_1},\la_{\beta_1})=(f_1,f_2)$
and $\mathcal{B}_{\beta_2} (u_{\beta_2},\la_{\beta_2})=(f_1,f_2)$ satisfy the relationship
\be (u_{\beta_2},\la_{\beta_2})=(u_{\beta_1},\la_{\beta_1})+\sum_{\nu \in\{\pm\}} \sum_{k=1}^N c^\nu_k e^{\la^\nu_k t} (\varphi^\nu_k,\psi^\nu_k),\label{diff_u1_u2}\ee
where $(\varphi^\nu_k,\psi^\nu_k)$ is the eigenvector of $\mathscr{L}_\eps$ associated with the eigenvalue $\la^\nu_k$ (see Lemma \ref{eigenvalue_qr}) and 
\begin{equation}\label{FormulaCoeff}
c^\nu_k=\frac{1}{2\la_k^\nu d_k}\Big((f_1,e^{-\overline{\la_k^\nu} t}g^\nu_k)_{L^2(B)}+(f_2,e^{-\overline{\la_k^\nu} t}h^\nu_k)_{L^2(B)}\Big).
\end{equation}
Here $(g^\nu_k,h^\nu_k)$ stand for the eigenvector of $\mathscr{L}^*_\eps$ associated with the eigenvalue $\la^\nu_k$ (see Lemma \ref{eigenvalue_qr_adjoint}) and $d_k$ is given by (\ref{dk}).
\end{theorem}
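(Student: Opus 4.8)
\medskip
\noindent\textbf{Proof proposal.} The plan is to pass to the Laplace transform in the variable $t$, as in the proof of Theorem \ref{th_0qr}, and to recover the difference $(u_{\beta_2},\la_{\beta_2})-(u_{\beta_1},\la_{\beta_1})$ as a finite sum of residues of the resolvent of the symbol $\mathscr{L}_\eps$, following the Kondratiev strategy. Since $(f_1,f_2)\in\mathcal{R}(\mathcal{B}_{\beta_1})\cap\mathcal{R}(\mathcal{B}_{\beta_2})$, i.e.\ $f_1,f_2\in W^0_{\beta_1}(B)\cap W^0_{\beta_2}(B)$, the fourth property of the Laplace transform recalled in Section \ref{SectionWeightedSobo} ensures that $\la\mapsto\widehat{f_1}(\la,\cdot)$ and $\la\mapsto\widehat{f_2}(\la,\cdot)$ are holomorphic with values in $L^2(0,\om)$ in the open strip $\{-\beta_2<{\rm Re}\,\la<-\beta_1\}$. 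Transforming the equations $\mathcal{B}_{\beta_j}(u_{\beta_j},\la_{\beta_j})=(f_1,f_2)$ and using the symbol of (\ref{DefSymbol}) gives $(\widehat{u}_{\beta_j}(\la),\widehat{\la}_{\beta_j}(\la))=\mathscr{L}_\eps(\la)^{-1}(\widehat{f_1}(\la),\widehat{f_2}(\la))$ for $\la\in\ell_{-\beta_j}$. By the analytic Fredholm theorem, already invoked in the proof of Theorem \ref{th_betaqr}, and Lemma \ref{eigenvalue_qr}, the map $\la\mapsto\mathscr{L}_\eps(\la)^{-1}$ is meromorphic on the strip, holomorphic outside the finite set $\{\la^\nu_1,\dots,\la^\nu_N:\nu=\pm\}$ of eigenvalues it contains; hence the right-hand side defines one meromorphic function on the strip, whose restrictions to $\ell_{-\beta_1}$ and $\ell_{-\beta_2}$ are, respectively, the transforms of $(u_{\beta_1},\la_{\beta_1})$ and $(u_{\beta_2},\la_{\beta_2})$.

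Next I would use the inversion formula $(u_{\beta_j},\la_{\beta_j})(t)=\frac{1}{2\pi i}\int_{\ell_{-\beta_j}}e^{\la t}(\widehat{u}_{\beta_j},\widehat{\la}_{\beta_j})(\la)\,d\la$ and deform the contour from $\ell_{-\beta_1}$ to $\ell_{-\beta_2}$, applying the residue theorem on the rectangle bounded by these two lines and by horizontal segments at ${\rm Im}\,\la=\pm T$. The contributions of the horizontal segments vanish as $T\to+\infty$ thanks to the $\la$-uniform bound of Theorem \ref{th_betaqr}, which controls $\mathscr{L}_\eps(\la)^{-1}$ on every vertical line $\ell_{\gamma}$ with $\gamma$ in a closed subinterval of $[\beta_1,\beta_2]$ bounded away from $\{(\pi/2+n\pi)/\om,\,n\in\Z\}$. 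This produces
\[(u_{\beta_2},\la_{\beta_2})-(u_{\beta_1},\la_{\beta_1})=-\sum_{\nu\in\{\pm\}}\sum_{k=1}^N\mathrm{Res}_{\la=\la^\nu_k}\Big(e^{\la t}\,\mathscr{L}_\eps(\la)^{-1}(\widehat{f_1}(\la),\widehat{f_2}(\la))\Big).\]

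It then remains to evaluate the residues. Writing $\mathscr{L}_\eps(\la)=\mathscr{L}_\eps(0)+\la^2\mathcal{A}$ with $\mathcal{A}(\varphi,\psi)=(-\varphi,-\psi/\eps)$, one has $\partial_\la\mathscr{L}_\eps(\la)=2\la\mathcal{A}$, so the normalizing quantity $\big(\partial_\la\mathscr{L}_\eps(\la^\nu_k)(\varphi^\nu_k,\psi^\nu_k),(g^\nu_k,h^\nu_k)\big)_{L^2(0,\om)\times L^2(0,\om)}$ equals $-2\la^\nu_k$ times $\int_0^\om\varphi^\nu_k\overline{g^\nu_k}\,d\theta+\tfrac1\eps\int_0^\om\psi^\nu_k\overline{h^\nu_k}\,d\theta=d_k\ne0$ by Lemma \ref{biorthogonal} and (\ref{dk}). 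Together with the one-dimensionality of $\mathrm{Ker}\,\mathscr{L}_\eps(\la^\nu_k)$ and of $\mathrm{Ker}\,\mathscr{L}^*_\eps(\la^\nu_k)$ (Lemmas \ref{eigenvalue_qr} and \ref{eigenvalue_qr_adjoint}), this shows that each $\la^\nu_k$ is a simple pole, and the standard formula for the resolvent of an operator pencil at a simple pole gives that $\mathrm{Res}_{\la=\la^\nu_k}\mathscr{L}_\eps(\la)^{-1}$ is the rank-one operator $(G_1,G_2)\mapsto -(2\la^\nu_k d_k)^{-1}\big(\int_0^\om G_1\overline{g^\nu_k}\,d\theta+\int_0^\om G_2\overline{h^\nu_k}\,d\theta\big)(\varphi^\nu_k,\psi^\nu_k)$. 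Applying it to $(\widehat{f_1}(\la^\nu_k),\widehat{f_2}(\la^\nu_k))$ and using Fubini to identify $\int_0^\om\widehat{f_i}(\la^\nu_k,\theta)\overline{g^\nu_k(\theta)}\,d\theta=(f_i,e^{-\overline{\la^\nu_k}t}g^\nu_k)_{L^2(B)}$, one finds that the $(\nu,k)$ residue equals $-c^\nu_k\,e^{\la^\nu_k t}(\varphi^\nu_k,\psi^\nu_k)$ with $c^\nu_k$ given by (\ref{FormulaCoeff}); together with the overall minus sign in the residue identity above, this is exactly (\ref{diff_u1_u2}).

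The step I expect to be the main obstacle is the rigorous justification of the contour deformation: one must check that the meromorphic extension of the transform is holomorphic up to the two bounding lines (true because $\beta_1,\beta_2\notin\{(\pi/2+n\pi)/\om\}$), and, above all, that the estimate of Theorem \ref{th_betaqr} controls the integrand uniformly across the strip away from fixed small discs around the $\la^\nu_k$, so that the horizontal edges of the rectangle genuinely drop out and each pole is encircled exactly once with the correct orientation; a clean way to do this is to first establish the identity for $(f_1,f_2)$ smooth with compact support in $B$ (for which the transforms are entire with rapid decay on vertical strips) and then pass to the limit, using that the solution operators $\mathcal{B}_{\beta_j}^{-1}$ are continuous (Theorem \ref{th_betaqr}) and that $(f_1,f_2)\mapsto c^\nu_k$ is continuous on $\mathcal{R}(\mathcal{B}_{\beta_1})\cap\mathcal{R}(\mathcal{B}_{\beta_2})$. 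A secondary point is the verification that the poles carry no generalized eigenfunctions, which is precisely the non-vanishing $\big(\partial_\la\mathscr{L}_\eps(\la^\nu_k)(\varphi^\nu_k,\psi^\nu_k),(g^\nu_k,h^\nu_k)\big)\ne0$ provided by Lemma \ref{biorthogonal}.
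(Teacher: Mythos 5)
Your argument is correct, and for the decomposition (\ref{diff_u1_u2}) it is the same as the paper's, which simply invokes the residue-theorem/contour-shift argument of \cite[Theorem 5.1.1]{kozlov_mazya_rossmann} (inverse Laplace transform, deformation from $\ell_{-\beta_1}$ to $\ell_{-\beta_2}$, residues at the eigenvalues of $\mathscr{L}_\eps$ lying in between). Where you genuinely diverge is in the derivation of the coefficient formula (\ref{FormulaCoeff}): you extract it in the same stroke, from the rank-one residue of the pencil resolvent at a simple pole, after checking that $\big(\partial_\la\mathscr{L}_\eps(\la^\nu_k)(\varphi^\nu_k,\psi^\nu_k),(g^\nu_k,h^\nu_k)\big)=-2\la^\nu_k d_k\neq0$ (the diagonal case of Lemma \ref{biorthogonal}, which also excludes Jordan chains); your signs and the Fubini identification $\int_0^\om\widehat{f_i}(\la^\nu_k,\cdot)\overline{g^\nu_k}\,d\theta=(f_i,e^{-\overline{\la^\nu_k}t}g^\nu_k)_{L^2(B)}$ all check out. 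The paper instead takes (\ref{diff_u1_u2}) with unknown coefficients as granted and determines them a posteriori: it applies $\mathcal{B}=(-\Delta,-\Delta/\eps)$ to $\xi\cdot\big((u_{\beta_2},\la_{\beta_2})-(u_{\beta_1},\la_{\beta_1})\big)$ for a cut-off $\xi$, pairs with $e^{-\overline{\la^\mu_j}t}(g^\mu_j,h^\mu_j)$, integrates by parts on the bounded strip $(t_1,t_2)\times(0,\om)$, and uses the full biorthogonality of Lemma \ref{biorthogonal} (off-diagonal cases included) to isolate $2\la^\mu_j d_j c^\mu_j$. Your route is more unified and makes the origin of $d_k$ transparent, at the price of justifying the contour deformation for data merely in $W^0_{\beta_1}(B)\cap W^0_{\beta_2}(B)$; your proposed fix (prove the identity for smooth compactly supported $(f_1,f_2)$, then pass to the limit using the continuity of $\mathcal{B}_{\beta_j}^{-1}$ from Theorem \ref{th_betaqr} and of $(f_1,f_2)\mapsto c^\nu_k$) is the standard and adequate one. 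The paper's route buys a derivation that needs no explicit residue of the resolvent and only finite-domain integration by parts. One small imprecision to fix in your write-up: on the horizontal edges at height $\pm T$ you need a resolvent bound on the whole segment $-\beta_2\le{\rm Re}\,\la\le-\beta_1$, not only away from the abscissas $(\pi/2+n\pi)/\om$; this is harmless because all eigenvalues sit at the fixed height $|{\rm Im}\,\la|=\ln\gamma_\eps/\om$, so for large $T$ the edges avoid the spectrum, but it should be said.
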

\begin{proof}
The first part of the theorem is obtained 
by using the residue theorem as in the proof of \cite[Theorem 5.1.1]{kozlov_mazya_rossmann}. Now we establish (\ref{FormulaCoeff}). Let us introduce a cut-off function $\xi \in \mathbb{R}$ such that $\xi(t)=0$ for $t \leq t_1$ and $\xi(t)=1$ for $t \geq t_2$, with $t_1<t_2$. From (\ref{diff_u1_u2}) and using the short notation $\mathcal{B}=(-\Delta,-\Delta/\eps)$, we have
\[-\mathcal{B}(\xi(u_{\beta_2}-u_{\beta_1}),\xi(\la_{\beta_2}-\la_{\beta_1}))=\sum_{\nu \in\{\pm\}}\sum_{k=1}^N c^\nu_k \Big(\Delta (\xi(e^{\la^\nu_k t} \varphi^\nu_k))+ \frac{1}{\eps}\Delta(\xi(e^{\la^\nu_k t} \psi^\nu_k))\Big).\]
We observe that $\Delta (\xi(e^{\la^\nu_k t} \varphi^\nu_k))$ and $\Delta (\xi(e^{\la^\nu_k t} \varphi^\nu_k))$ are non vanishing only on $[t_1,t_2] \times [0,\omega]$, which implies that for $j=1,\dots,N$ and $\mu=\pm$,
\[-\Big(\mathcal{B}(\xi(u_{\beta_2}-u_{\beta_1}),\xi(\la_{\beta_2}-\la_{\beta_1})),(e^{-\overline{\la^\mu_j}t}g_j,e^{-\overline{\la^\mu_j}t}h_j)\Big)_{L^2(B) \times L^2(B)}\]
\[
=\sum_{\nu\in\{\pm\}}\sum_{k=1}^N c^\nu_k \Big(\Delta (\xi(e^{\la^\nu_k t} \varphi^\nu_k)),e^{-\overline{\la^\mu_j}t}g_j\Big)_{L^2((t_1,t_2) \times (0,\omega))} \]
\[+\frac{1}{\eps}\sum_{\nu\in\{\pm\}}\sum_{k=1}^N c^\nu_k
\Big(\Delta (\xi(e^{\la^\nu_k t} \psi^\nu_k)),e^{-\overline{\la^\mu_j}t}h_j\Big)_{L^2((t_1,t_2) \times (0,\omega))} .\]
By an integration by parts formula in the domain $(t_1,t_2) \times (0,\omega)$ and by using that $\Delta (e^{-\overline{\la^\mu_j} t}g_j)=0$ and
$\Delta (e^{-\overline{\la^\mu_j} t}h_j)=0$,
we get that
\[-\Big(\mathcal{B}(\xi(u_{\beta_2}-u_{\beta_1}),\xi(\la_{\beta_2}-\la_{\beta_1})),(e^{-\overline{\la^\mu_j}t}g_j,e^{-\overline{\la^\mu_j}t}h_j)\Big)_{L^2(B) \times L^2(B)}\]
\[
=\sum_{\nu \in\{\pm\}}\sum_{k=1}^N c^\nu_k \Big(\la^\nu_k e^{\la^\nu_k t_2} \varphi^\nu_k,e^{-\overline{\la^\mu_j} t_2}g^\mu_j\Big)_{L^2(0,\omega)}
+ \frac{1}{\eps} \sum_{\nu \in\{\pm\}}\sum_{k=1}^N c^\nu_k \Big((\la^\nu_k e^{\la^\nu_k t_2} \psi^\mu_k,e^{-\overline{\la^\mu_j} t_2}h^\mu_j\Big)_{L^2(0,\omega)}\]
\[
-\sum_{\nu \in\{\pm\}}\sum_{k=1}^N c^\nu_k \Big(e^{\la^\nu_k t_2} \varphi^\nu_k,-\overline{\la^\mu_j} e^{-\overline{\la^\mu_j} t_2}g^\mu_j\Big)_{L^2(0,\omega)}
-\frac{1}{\eps} \sum_{\nu \in\{\pm\}}\sum_{k=1}^N c^\nu_k \Big(e^{\la^\nu_k t_2} \psi^\nu_k,-\overline{\la^\mu_j} e^{-\overline{\la^\mu_j} t_2}h^\mu_j\Big)_{L^2(0,\omega)}.
\]
In view of the biorthogonality relationships of Lemma \ref{biorthogonal} and due to the fact that in case $\la^\nu_k=-\la^\mu_j$ (that is $j+k=-1$ and $\nu+\mu=0$) the first and third terms within the brackets above compensate one another as well as the second and fourth terms, 
we end up with
\be -\Big(\mathcal{B}(\xi(u_{\beta_2}-u_{\beta_1}),\xi(\la_{\beta_2}-\la_{\beta_1})),(e^{-\overline{\la^\mu_j}t}g_j,e^{-\overline{\la^\mu_j}t}h_j)\Big)_{L^2(B) \times L^2(B)}=2 \la^\mu_j c_j^\mu d_j.\label{step1}\ee
On the other end, since $\beta_1 < \beta_2$, the function $u_{\beta_2}$ is more decreasing than $u_{\beta_1}$ at $+\infty$. 
And the situation is inverted at $-\infty$. The same property holds for $\la_{\beta_2}$ and $\la_{\beta_1}$.
Since $\xi$ vanishes at $-\infty$, we have that $(\xi u_{\beta_2}, \xi \la_{\beta_2}) \in \mathcal{D}(\mathcal{B}_{\beta_1}) \cap \mathcal{D}(\mathcal{B}_{\beta_2})$.
Using an integration by parts in $B$ and the fact that $-\beta_2 < {\rm Re}\,\la_j$, we obtain that
\be \Big(\mathcal{B}(\xi u_{\beta_2},\xi \la_{\beta_2}),(e^{-\overline{\la^\mu_j}t}g_j,e^{-\overline{\la^\mu_j}t}h_j)\Big)_{L^2(B) \times L^2(B)}
=0.\label{step2}\ee
With the same argument, we obtain
\be \Big(\mathcal{B}((1-\xi) u_{\beta_1},(1-\xi)\la_{\beta_1}),(e^{-\overline{\la^\mu_j}t}g_j,e^{-\overline{\la^\mu_j}t}h_j)\Big)_{L^2(B) \times L^2(B)}
=0.\label{step3}\ee
By combining (\ref{step1}), (\ref{step2}) and (\ref{step3}), we get
\[
\begin{array}{ll}
& 2 \la^\mu_j c_j^\mu d_j \\[4pt]
=& \Big(\mathcal{B}(\xi(u_{\beta_1}-u_{\beta_2}),\xi(\la_{\beta_1}-\la_{\beta_2})),(e^{-\overline{\la^\mu_j}t}g_j,e^{-\overline{\la^\mu_j}t}h_j)\Big)_{L^2(B) \times L^2(B)}\\[6pt]
=& \Big(\mathcal{B}(\xi u_{\beta_1},\xi \la_{\beta_1}),(e^{-\overline{\la^\mu_j}t}g_j,e^{-\overline{\la^\mu_j}t}h_j)\Big)_{L^2(B) \times L^2(B)}\\[6pt]
=& \Big(\mathcal{B}(u_{\beta_1},\la_{\beta_1}),(e^{-\overline{\la^\mu_j}t}g_j,e^{-\overline{\la^\mu_j}t}h_j)\Big)_{L^2(B) \times L^2(B)}\\[6pt]
 & \qquad-\Big(\mathcal{B}((1-\xi)u_{\beta_1},(1-\xi)\la_{\beta_1}),(e^{-\overline{\la^\mu_j}t}g_j,e^{-\overline{\la^\mu_j}t}h_j)\Big)_{L^2(B) \times L^2(B)}\\[6pt]
 =&\Big(\mathcal{B}(u_{\beta_1},\la_{\beta_1}),(e^{-\overline{\la^\mu_j}t}g_j,e^{-\overline{\la^\mu_j}t}h_j)\Big)_{L^2(B) \times L^2(B)}=\Big((f_1,f_2),(e^{-\overline{\la^\mu_j}t}g_j,e^{-\overline{\la^\mu_j}t}h_j)\Big)_{L^2(B) \times L^2(B)},
\end{array}
\]
which completes the proof.
\end{proof}
\noindent From the previous theorem in the strip, we obtain the following corollary in the infinite cone by using the identity $r^2 \mathcal{C}_\beta= \mathcal{E}^{-1} \mathcal{B}_{\beta-1} \mathcal{E}$ 
\begin{corollary}
\label{beta1_beta2_C}
Assume that $\beta_1<\beta_2$ are two real numbers such that $\beta_j-1 \notin \{(\pi/2+n\pi)/\omega,\,n\in\mathbb{Z}\}$, $j=1,2$.
Let us denote by $\la^\nu_1,\la^\nu_2,\dots,\la^\nu_N$, with $\nu=\pm$, the eigenvalues of $\mathscr{L}_\eps$ in the strip $-\beta_2+1<{\rm Re}\,\la< -\beta_1+1$.
For $(f_1,f_2) \in \mathcal{R}(\mathcal{C}_{\beta_1}) \cap \mathcal{R}(\mathcal{C}_{\beta_2})$, the solutions $(u_{\beta_1},\la_{\beta_1}) \in \mathcal{D}(\mathcal{C}_{\beta_1})$ and 
$(u_{\beta_2},\la_{\beta_2}) \in \mathcal{D}(\mathcal{C}_{\beta_2})$ to the problems $\mathcal{C}_{\beta_1} (u_{\beta_1},\la_{\beta_1})=(f_1,f_2)$
and $\mathcal{C}_{\beta_2} (u_{\beta_2},\la_{\beta_2})=(f_1,f_2)$ satisfy the relationship
\[(u_{\beta_2},\la_{\beta_2})=(u_{\beta_1},\la_{\beta_1})+\sum_{\nu \in\{\pm\}} \sum_{k=1}^N c^\nu_k r^{\la^\nu_k} (\varphi^\nu_k,\psi^\nu_k),\]
where
\[c^\nu_k=\frac{1}{2\la_k^\nu d_k}\Big((f_1,r^{-\overline{\la_k^\nu}}g^\nu_k)_{L^2(K)}+(f_2,r^{-\overline{\la_k^\nu}}h^\nu_k)_{L^2(K)}\Big).\]
\end{corollary}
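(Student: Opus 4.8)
The plan is to transport the statement of Theorem \ref{beta1_beta2} from the strip $B$ to the cone $K$ through the isomorphism $\mathcal{E}$, exactly as was done for the well-posedness part in Corollary \ref{equiv_B_C}. The key input is the identity $r^2\mathcal{C}_\beta=\mathcal{E}^{-1}\mathcal{B}_{\beta-1}\mathcal{E}$, together with the facts established in the proof of Corollary \ref{equiv_B_C} that $\mathcal{E}$ is an isomorphism from $\mathcal{D}(\mathcal{C}_\beta)$ onto $\mathcal{D}(\mathcal{B}_{\beta-1})$ and that $f\mapsto r^{-2}\mathcal{E}^{-1}f$ is an isomorphism from $\mathcal{R}(\mathcal{B}_{\beta-1})$ onto $\mathcal{R}(\mathcal{C}_\beta)$. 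First I would set $\tilde u_{\beta_j}=\mathcal{E}u_{\beta_j}$, $\tilde\la_{\beta_j}=\mathcal{E}\la_{\beta_j}$ and $(\tilde f_1,\tilde f_2)=(\mathcal{E}(r^2f_1),\mathcal{E}(r^2f_2))=(e^{2t}\mathcal{E}f_1,e^{2t}\mathcal{E}f_2)$, so that the equations $\mathcal{C}_{\beta_j}(u_{\beta_j},\la_{\beta_j})=(f_1,f_2)$ become equivalent to $\mathcal{B}_{\beta_j-1}(\tilde u_{\beta_j},\tilde\la_{\beta_j})=(\tilde f_1,\tilde f_2)$, and the hypothesis $(f_1,f_2)\in\mathcal{R}(\mathcal{C}_{\beta_1})\cap\mathcal{R}(\mathcal{C}_{\beta_2})$ turns into $(\tilde f_1,\tilde f_2)\in\mathcal{R}(\mathcal{B}_{\beta_1-1})\cap\mathcal{R}(\mathcal{B}_{\beta_2-1})$.

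Next I would observe that the hypothesis $\beta_j-1\notin\{(\pi/2+n\pi)/\omega,\,n\in\mathbb{Z}\}$ is precisely what is required to apply Theorem \ref{beta1_beta2} to the pair $(\beta_1-1,\beta_2-1)$, and that the eigenvalues of $\mathscr{L}_\eps$ in the strip $-\beta_2+1<{\rm Re}\,\la<-\beta_1+1$ are exactly the $\la^\nu_k$, $k=1,\dots,N$, listed in the statement. Theorem \ref{beta1_beta2} then gives
\[(\tilde u_{\beta_2},\tilde\la_{\beta_2})=(\tilde u_{\beta_1},\tilde\la_{\beta_1})+\sum_{\nu\in\{\pm\}}\sum_{k=1}^N c^\nu_k\,e^{\la^\nu_k t}(\varphi^\nu_k,\psi^\nu_k),\]
with $c^\nu_k$ given by formula (\ref{FormulaCoeff}) applied to the source $(\tilde f_1,\tilde f_2)$. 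Applying $\mathcal{E}^{-1}$ and using $\mathcal{E}^{-1}(e^{\la t}\varphi(\theta))=r^{\la}\varphi(\theta)$ produces the claimed expansion of $(u_{\beta_2},\la_{\beta_2})-(u_{\beta_1},\la_{\beta_1})$; note that the double sum is finite (only $N$ eigenvalues lie in the relevant strip), so no convergence issue arises.

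The only point requiring genuine care, and what I expect to be the main (mild) obstacle, is rewriting $c^\nu_k$ in terms of pairings over $K$ rather than over $B$: the change of variable $r=e^t$ introduces the weight $r\,dr\,d\theta=e^{2t}\,dt\,d\theta$, which must cancel against the factor $e^{2t}$ hidden in $\tilde f_j$. Concretely, using $\overline{e^{-\overline{\la_k^\nu}t}}=e^{-\la_k^\nu t}$ and $\tilde f_1=e^{2t}\mathcal{E}f_1$, one checks that
\[(\tilde f_1,e^{-\overline{\la_k^\nu}t}g^\nu_k)_{L^2(B)}=\int_B (\mathcal{E}f_1)\,e^{(2-\la_k^\nu)t}\,\overline{g^\nu_k}\,dt\,d\theta=\int_K f_1\,\overline{r^{-\overline{\la_k^\nu}}g^\nu_k}\,r\,dr\,d\theta=(f_1,r^{-\overline{\la_k^\nu}}g^\nu_k)_{L^2(K)},\]
and the analogous identity holds with $(f_1,g^\nu_k)$ replaced by $(f_2,h^\nu_k)$. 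Substituting these into (\ref{FormulaCoeff}) yields the stated formula for $c^\nu_k$ and completes the proof. Everything beyond this weight bookkeeping and the index shift $\beta\mapsto\beta-1$ is a purely mechanical transcription of the strip result via $\mathcal{E}$.
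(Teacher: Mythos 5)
Your proposal is correct and follows exactly the route the paper intends: the paper derives this corollary from Theorem \ref{beta1_beta2} purely via the identity $r^2\mathcal{C}_\beta=\mathcal{E}^{-1}\mathcal{B}_{\beta-1}\mathcal{E}$, leaving the details implicit, and your weight bookkeeping (the cancellation of the Jacobian $r\,dr=e^{2t}\,dt$ against the factor $e^{2t}$ in $\tilde f_j=\mathcal{E}(r^2f_j)$, giving $(\tilde f_1,e^{-\overline{\la_k^\nu}t}g^\nu_k)_{L^2(B)}=(f_1,r^{-\overline{\la_k^\nu}}g^\nu_k)_{L^2(K)}$) is precisely the verification needed to justify the stated formula for $c^\nu_k$.
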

\begin{remark}
For real valued functions $f_1$ and $f_2$, we have $\overline{c^\nu_k}=c^{-\nu}_k$ for all $k \in\{1,\dots,N\}$.
\end{remark}
\noindent We end up with the main proposition of this section.
\begin{proposition}
\label{mixed}
Assume that $S$ is the vertex of a corner of mixed type.
Let us consider $s <1+\pi/(2\omega)$ if $\omega \geq \pi/2$ and $s=2$ otherwise. 
For $f \in L^2(\Omega)$ and $\eps>0$, the solution $(u_\eps,\la_\eps) \in V_{0} \times \tilde{V}_0$ to the problem (\ref{Pr_cauchy_simple}) is such that $\zeta u_\eps$ and $\zeta \la_\eps$ belong to $H^{s}(\Omega)$ and there exists a constant $C>0$ which depends only on the geometry such that
\[\forall \eps \in (0,1], \quad \eps\|\zeta u_\eps\|_{H^{s}(\Omega)} + \sqrt{\eps}\|\zeta \la_\eps\|_{H^{s}(\Omega)} \leq C \|f\|_{L^2(\Omega)}.\]
If in addition we assume that $f$ is such that problem (\ref{cauchy_strong_simple}) has a (unique) solution $u$, then
\[\forall \eps \in (0,1], \quad \sqrt{\eps}\|\zeta u_\eps\|_{H^{s}(\Omega)} + \|\zeta \la_\eps\|_{H^{s}(\Omega)} \leq C \|u\|_{H^1(\Delta,\Omega)}.\]
\end{proposition}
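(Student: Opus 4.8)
The plan is to localise the problem near the mixed vertex $S$ and transport it, through the change of variables $t=\ln r$, into the weighted--Sobolev framework of Sections \ref{SectionWeightedSobo}--\ref{CornerGammaGammaTilde}, where Corollaries \ref{equiv_B_C} and \ref{beta1_beta2_C} are at our disposal with constants that do not depend on $\eps$. Using the strong form (\ref{Pr_cauchy_simple_strong}) together with $\Delta u_\eps=-f/(1+\eps)$ and $\Delta\la_\eps=\eps f/(1+\eps)$, I would first observe that the pair $(\zeta u_\eps,\zeta\la_\eps)$ solves the cone problem (\ref{problem_C}) associated with $\mathcal{C}_\beta$, with right-hand side
\[ F_1=-\Delta\zeta\,u_\eps-2\nabla\zeta\cdot\nabla u_\eps+\zeta\,\frac{f}{1+\eps},\qquad F_2=\frac{1}{\eps}\big(-\Delta\zeta\,\la_\eps-2\nabla\zeta\cdot\nabla\la_\eps\big)-\zeta\,\frac{f}{1+\eps}. \]
Since $\nabla\zeta$ is supported in an annulus that stays away from $S$ while $\zeta$ is compactly supported, $(F_1,F_2)\in V^0_\beta(K)\times V^0_\beta(K)$ for every $\beta\ge0$, with $\|F_1\|_{V^0_\beta(K)}\le C(\|u_\eps\|_{H^1(\Omega)}+\|f\|_{L^2(\Omega)})$ and $\|F_2\|_{V^0_\beta(K)}\le C(\eps^{-1}\|\la_\eps\|_{H^1(\Omega)}+\|f\|_{L^2(\Omega)})$. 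Theorem \ref{base_simple} then turns these into $\|F_1\|_{V^0_\beta(K)}\le C\eps^{-1/2}\|f\|_{L^2(\Omega)}$, $\|F_2\|_{V^0_\beta(K)}\le C\eps^{-1}\|f\|_{L^2(\Omega)}$ in general, and into $\|F_1\|_{V^0_\beta(K)}\le C\|u\|_{H^1(\Delta,\Omega)}$, $\|F_2\|_{V^0_\beta(K)}\le C\eps^{-1/2}\|u\|_{H^1(\Delta,\Omega)}$ when the data are compatible.

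Next I would fix the weight. Take $\beta=0$ if $\om<\pi/2$, and, if $\om\ge\pi/2$, take $\beta\in(1-\pi/(2\om),1)$ as close to $1-\pi/(2\om)$ as the target exponent $s<1+\pi/(2\om)$ demands; in both cases $\beta-1$ lies strictly between two consecutive eigenvalue lines of $\mathscr{L}_\eps$ (by Lemma \ref{eigenvalue_qr} the real parts of the $\la^\pm_n$ are $(\pi/2+n\pi)/\om$), so Corollary \ref{equiv_B_C} applies, and $V^2_\beta(K)$ restricted to $B(S,b)$ embeds continuously into $H^s(\Om)$ near $S$ with $s=2-\beta$ if $\beta=0$ and any $s<2-\beta$ if $\beta\in(0,1)$ --- which is exactly the dichotomy of the statement. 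It remains to check that $(\zeta u_\eps,\zeta\la_\eps)$ \emph{is} the element $\mathcal{C}_\beta^{-1}(F_1,F_2)$ of $\mathcal{D}(\mathcal{C}_\beta)$. Because $u_\eps$ vanishes on the edge of $\Gamma$ and $\la_\eps$ on the edge of $\tilde\Gamma$, a Hardy inequality on $(0,\om)$ (with finite constant, since the angular operator with one Dirichlet endpoint has positive first eigenvalue) gives $\zeta u_\eps,\zeta\la_\eps\in V^1_0(K)$; a standard weighted elliptic regularity argument, of the type underlying Section \ref{CornerGammaGammaTilde}, applied with weight index $0$ (not an eigenvalue line), upgrades this to $(\zeta u_\eps,\zeta\la_\eps)\in\mathcal{D}(\mathcal{C}_1)$, hence $(\zeta u_\eps,\zeta\la_\eps)=\mathcal{C}_1^{-1}(F_1,F_2)$. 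Finally I would compare the levels $1$ and $\beta$ via Corollary \ref{beta1_beta2_C}: the eigenvalues of $\mathscr{L}_\eps$ in the strip $-1+1<\mathrm{Re}\,\la<-\beta+1$, i.e. $0<\mathrm{Re}\,\la<1-\beta$, are none, because $1-\beta\le\pi/(2\om)$ and the smallest positive eigenvalue real part is $\pi/(2\om)$; the singular sum is therefore empty and $\mathcal{C}_1^{-1}(F_1,F_2)=\mathcal{C}_\beta^{-1}(F_1,F_2)\in V^2_\beta(K)$.

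Putting the pieces together, $(\zeta u_\eps,\zeta\la_\eps)=\mathcal{C}_\beta^{-1}(F_1,F_2)\in V^2_\beta(K)$ combined with the embedding yields $\zeta u_\eps,\zeta\la_\eps\in H^s(\Om)$, while Corollary \ref{equiv_B_C} gives $\sqrt{\eps}\,\|\zeta u_\eps\|_{V^2_\beta(K)}+\|\zeta\la_\eps\|_{V^2_\beta(K)}\le C\big(\|F_1\|_{V^0_\beta(K)}+\sqrt{\eps}\,\|F_2\|_{V^0_\beta(K)}\big)$. Inserting the bounds on $(F_1,F_2)$ from the first step, the right-hand side is $\le C\eps^{-1/2}\|f\|_{L^2(\Omega)}$ in general and $\le C\|u\|_{H^1(\Delta,\Omega)}$ in the compatible case; dividing by $\sqrt{\eps}$ and applying the embedding gives precisely the two announced estimates, the extra $\sqrt{\eps}$ lost on $u_\eps$ relative to $\la_\eps$ coming from the factor $1/\eps$ in $F_2$ and mirroring the gap already observed in Proposition \ref{Gamma_tilde}.

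The \emph{delicate point}, and the reason the whole apparatus of Section \ref{CornerGammaGammaTilde} is needed, is the middle step: $\mathscr{L}_\eps$ is non--self-adjoint, with $\eps$-dependent complex eigenvalues $\la^\pm_n=(\pi/2+n\pi\pm i\ln\gamma_\eps)/\om$ sitting on two horizontal lines whose separation $2\ln\gamma_\eps/\om$ diverges as $\eps\to0$, so neither a Grisvard-type eigenfunction expansion nor a fixed spectral gap is available. One must instead lean on the $\eps$-uniform weighted estimates of Theorems \ref{th_0qr}--\ref{th_betaqr} and Corollary \ref{equiv_B_C}, and on the biorthogonality relation of Lemma \ref{biorthogonal} (which is what makes Corollary \ref{beta1_beta2_C}, and hence the identification of the solution at two weight levels, legitimate), while keeping every constant independent of $\eps$ --- the verification that the relevant eigenvalue strip is empty being the step that fixes the optimal exponent $s$ stated in the proposition.
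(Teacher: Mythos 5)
Your proof is correct and follows essentially the same route as the paper: localize with the cutoff, identify $(\zeta u_\eps,\zeta\la_\eps)$ with the weighted-cone solution $\mathcal{C}_1^{-1}(F_1,F_2)$ via the energy-space (Hardy/Poincar\'e) argument and uniqueness of the variational solution, shift from weight $1$ to weight $\beta$ using the absence of eigenvalues of $\mathscr{L}_\eps$ in the strip $0<\mathrm{Re}\,\la<1-\beta$ (Corollary \ref{beta1_beta2_C}), and conclude with the $\eps$-uniform estimate of Corollary \ref{equiv_B_C}, the embedding $V^2_\beta(K_b)\hookrightarrow H^s(K_b)$, and the $H^1$ a priori bounds of Theorem \ref{base_simple}. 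The only differences are cosmetic: you treat the compatible case explicitly (the paper leaves it implicit) and your bookkeeping of the powers of $\eps$ in $F_1$, $F_2$ reproduces exactly the paper's estimates (\ref{estim_gh}).
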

\begin{proof}
The pair $(v_\eps,\mu_\eps),=(\zeta u_\eps,\zeta \la_\eps)$, where $(u_\eps,\la_\eps) \in V_0 \times \tilde{V}_0$ solves (\ref{Pr_cauchy_simple}),  
satisfies the problem
\be \left\{
\begin{array}{rcll}
\displaystyle  -\Delta v_\eps  &=& g_\eps&  \mbox{in } K\\
\displaystyle -\Delta \mu_\eps  &=& \eps\,h_\eps &  \mbox{in } K\\
 v_\eps&=&0  & \mbox{on }\partial K_0 \\
 \partial_\nu v_\eps -\partial_\nu \mu_\eps&=&0 & \mbox{on } \partial K_0 \\
 \mu_\eps&=&0  & \text{on }\partial K_\omega \\
 \eps\, \partial_\nu v_\eps + \partial_\nu \mu_\eps&=&0 & \mbox{on }\partial K_\omega,
\end{array}
\right.
\label{cauchy_cone}
\ee
with $\dsp(g_\eps,h_\eps)=\Big(-\Delta \zeta u_\eps -2\nabla \zeta\cdot \nabla u_\eps +\zeta \frac{f}{1+\eps},-\Delta \zeta \frac{\la_\eps}{\eps} -2\nabla \zeta\cdot \nabla\Big( \frac{\la_\eps}{\eps}\Big) -\zeta \frac{f}{1+\eps}\Big)$.\\
\newline
Let us study the regularity of $v_\eps$, $\mu_\eps$ by using the properties of the operator $\mathcal{C}_\beta$ defined in (\ref{problem_C}). To proceed, in particular, we will exploit the results of Corollaries \ref{equiv_B_C} and \ref{beta1_beta2_C}.\\
\newline
First, observing that $V_0^0(K)=L^2(K)$ and that $\zeta$ is compactly supported, we deduce that $(g_\eps,h_\eps) \in V_0^0(K) \times V_0^0(K)$. And more generally, we have
$(g_\eps,h_\eps) \in \mathcal{R}(\mathcal{C}_\beta)=V_\beta^0(K) \times V_\beta^0(K)$ for all $\beta \geq 0$. For $\beta=1$, there holds $\beta-1\notin \{(\pi/2+n\pi)/\omega,\,n \in \mathbb{Z}\}$. From Corollary \ref{equiv_B_C}, we infer that $\mathcal{C}_1$ is an isomorphism from  
\[
\begin{array}{ll}
\mathcal{D}(\mathcal{C}_1)=&\{(v_\eps,\mu_\eps) \in \mathring{V}^1_{0,0}(K) \cap V_1^{2}(K) \times \mathring{V}^1_{0,\omega}(K) \cap  V_1^{2}(K),\\[3pt]
&\quad\partial_\nu v_\eps-\partial_\nu \mu_\eps=0\,\,{\rm on}\,\, \partial K_0,\quad \eps \partial_\nu v_\eps+ \partial_\nu \mu_\eps =0 \,\,{\rm on}\,\, \partial K_\omega\}
\end{array}
\]
to $\mathcal{R}(\mathcal{C}_1)\ni(g_\eps,h_\eps)$. Let us denote by $(v_\eps^1,\mu_\eps^1) \in \mathcal{D}(\mathcal{C}_1)$ the unique element of $\mathcal{D}(\mathcal{C}_1)$ such that $\mathcal{C}_1 (v_\eps^1,\mu_\eps^1)=(g_\eps,h_\eps)$. Corollary \ref{equiv_B_C} ensures that there is a constant $C$ such that
\[
\sqrt{\eps}\|v_\eps^1\|_{V^2_1(K)}+\|\mu_\eps^1\|_{V^2_1(K)} \leq C\,(\|g_\eps\|_{V^0_1(K)}+\sqrt{\eps}\|h_\eps\|_{V^0_1(K)}).
\]
Let us prove that $(v_\eps^1,\mu_\eps^1)$ coincides with $(v_\eps,\mu_\eps)$.
We have $(v_\eps,\mu_\eps) \in \mathring{V}_{0,0}^1 \times \mathring{V}_{0,\omega}^1$.
Indeed, $v_\eps$ vanishes for $r \geq b$ and from Poincar\'e's inequality, there holds
\[\int_{K_b} \frac{1}{r^2}v^2_\eps\,dx \leq C \int_{K_b} |\nabla v_\eps|^2\,dx.\]
The same inequality is valid for $\mu_\eps$.
Next, let us introduce $\upsilon \in \mathscr{C}^{\infty}_0(\overline{K})$ such that $\upsilon$ vanishes in a vicinity of $\partial K_0$ and $\psi \in \mathscr{C}^{\infty}_0(\overline{K})$ such that $\psi$ vanishes in a vicinity of $\partial K_\omega$.
It is easy to check that $(v_\eps,\mu_\eps)$ solves 
\begin{equation}\label{PbVaria_K}
\left\{\begin{array}{rcl}
\displaystyle \eps\int_K \nabla v_\eps\cdot \nabla \upsilon\,dx + \int_K \nabla \upsilon\cdot \nabla \mu_\eps\,dx&=&\dsp\int_K\eps(g_\eps+h_\eps)\,\upsilon\,dx\\[8pt]
\displaystyle \int_K \nabla v_\eps \cdot \nabla \psi\,dx -\int_K \nabla \mu_\eps\cdot \nabla \psi\,dx &=&\dsp\int_K (g_\eps-\eps\,h_\eps)\,\psi\,dx.
\end{array}
\right.
\end{equation}
One can also verify that $(v_\eps^1,\mu_\eps^1)$ satisfies (\ref{PbVaria_K}). Since $(v_\eps-v_\eps^1,\mu_\eps-\mu_\eps^1) \in \mathring{V}_{0,0}^1 \times \mathring{V}_{0,\omega}^1$, using the density of the set of functions $\upsilon$ (resp. $\psi$) in $\mathring{V}_{0,0}^1$ (resp. in $\mathring{V}_{0,\omega}^1$), we conclude that $(v_\eps,\mu_\eps)=(v_\eps^1,\mu_\eps^1)$. Now we must separate the rest of the analysis according to the configuration.\\
\newline
$\star\,$ Let us first assume that $\omega<\pi/2$. In this case, for $\beta=0$, we have $\beta-1 \notin \{(\pi/2+n\pi)/\omega,\,n \in \mathbb{Z}\}$. Then Corollary \ref{equiv_B_C} guarantees that $\mathcal{C}_0$ is an isomorphism from  
\[
\begin{array}{ll}
\mathcal{D}(\mathcal{C}_0)=&
\{(v_\eps,\mu_\eps) \in \mathring{V}^1_{-1,0}(K) \cap V_0^{2}(K) \times \mathring{V}^1_{-1,\omega}(K) \cap  V_0^{2}(K),\\[3pt]
&\quad\partial_\nu v_\eps-\partial_\nu \mu_\eps=0\,\,{\rm on}\,\, \partial K_0,\quad \eps \partial_\nu v_\eps+ \partial_\nu \mu_\eps =0 \,\,{\rm on}\,\, \partial K_\omega\}
\end{array}
\]
to $\mathcal{R}(\mathcal{C}_0)\ni(g_\eps,h_\eps)$. Let us denote by $(v_\eps^0,\mu_\eps^0) \in \mathcal{D}(\mathcal{C}_0)$ the unique element of $\mathcal{D}(\mathcal{C}_0)$ such that $\mathcal{C}_0 (v_\eps^0,\mu_\eps^0)=(g_\eps,h_\eps)$. Corollary \ref{equiv_B_C} ensures that there is a constant $C$ such that
\be \label{estim_0}
\sqrt{\eps}\|v_\eps^0\|_{V^2_0(K)}+\|\mu_\eps^0\|_{V^2_0(K)} \leq C\,(\|g_\eps\|_{L^2(K)}+\sqrt{\eps}\|h_\eps\|_{L^2(K)})  
\ee
But from Lemma \ref{eigenvalue_qr}, the eigenvalues $\la_n^\pm$ of $\mathscr{L}_\eps$ satisfy ${\rm Re}\,\la^\pm_n=(\pi/2+n \pi)/\omega$, $n \in \mathbb{Z}$. As a consequence when $\omega<\pi/2$, none of them lies in the strip $0<{\rm Re}\,\la<1$. This implies that 
\[(v_\eps,\mu_\eps)=(v_\eps^1,\mu_\eps^1)=(v_\eps^0,\mu_\eps^0).\]
Besides, we observe that $V_0^2(K) \subset H^2(K_b)$. Hence $(v_\eps,\mu_\eps) \in H^2(K_b) \times H^2(K_b)$. Using the estimates 
\be 
\left\{\begin{array}{lcl}
\displaystyle \|g_\eps\|_{L^2(K)} &\leq &C (\|u_\eps\|_{H^1(K_b)}+ \|f\|_{L^2(K_b)})\\[4pt]
\displaystyle \|h_\eps\|_{L^2(K)} &\leq &C (\|\la_\eps\|_{H^1(K_b)}/\eps+ \|f\|_{L^2(K_b)}),
\end{array} \right.
\label{estim_gh}
\ee
then from (\ref{estim_0}), we can write
\[
\begin{array}{ll}
&\sqrt{\eps} \|\zeta u_\eps\|_{H^2(\Omega)} + \|\zeta \la_\eps\|_{H^2(\Omega)} = \sqrt{\eps} \|v_\eps\|_{H^2(K_b)} + \|\mu_\eps\|_{H^2(K_b)}\\[5pt]
\leq &C(\sqrt{\eps} \|v_\eps^0\|_{V_0^2(K)} + \|\mu_\eps^0\|_{V_0^2(K)}) \leq C(\|g_\eps\|_{L^2(K_b)}+\sqrt{\eps}\|h_\eps\|_{L^2(K_b)})\\[5pt]
 \leq & \dsp C(\|u_\eps\|_{H^1(K_b)} + \frac{1}{\sqrt{\eps}}\|\la_\eps\|_{H^1(K_b)} + \|f\|_{L^2(K_b)}).
\end{array}
\]
By using (\ref{estim1}), finally we get 
\[\eps \|\zeta u_\eps\|_{H^2(\Omega)} + \sqrt{\eps} \|\zeta \la_\eps\|_{H^2(\Omega)} \leq C \|f\|_{L^2(\Omega)}.\]
$\star$ Now let us assume that $\omega \geq \pi/2$. Choose $\beta$ such that $0 \leq 1-\pi/(2\omega)<\beta<1$. In this case, since $\beta-1 \notin \{(\pi/2+n\pi)/\omega,\,n \in \mathbb{Z}\}$, the operator $\mathcal{C}_\beta$ is an isomorphism from 
\[
\begin{array}{ll}
\mathcal{D}(\mathcal{C}_\beta)=&\{(v_\eps,\mu_\eps) \in \mathring{V}^1_{\beta-1,0}(K) \cap V_\beta^{2}(K) \times \mathring{V}^1_{\beta-1,\omega}(K) \cap  V_\beta^{2}(K),\\[3pt]
&\quad\partial_\nu u_\eps-\partial_\nu \la_\eps=0\,\,{\rm on}\,\, \partial K_0,\quad \eps \partial_\nu u_\eps+ \partial_\nu \la_\eps =0 \,\,{\rm on}\,\, \partial K_\omega\}
\end{array}
\]
to $\mathcal{R}(\mathcal{C}_\beta)\ni(g_\eps,h_\eps)$. Let us denote by $(v_\eps^\beta,\mu_\eps^\beta) \in \mathcal{D}(\mathcal{C}_\beta)$ the unique element of $\mathcal{D}(\mathcal{C}_\beta)$ such that $\mathcal{C}_\beta (v_\eps^\beta,\mu_\eps^\beta)=(g_\eps,h_\eps)$. Using Corollary \ref{equiv_B_C} and the fact that $g_\eps$, $h_\eps$ are compactly supported with $\beta>0$, we can write 
\be\label{estim_beta} 
\begin{array}{lcl}
\sqrt{\eps}\|v_\eps^\beta\|_{V^2_\beta(K)}+\|\mu_\eps^\beta\|_{V^2_\beta(K)} &\leq &C\,(\|g_\eps\|_{V^0_\beta(K)}+\sqrt{\eps}\|h_\eps\|_{V^0_\beta(K)})\\[2pt]
&\leq &C\,(\|g_\eps\|_{L^2(K)}+\sqrt{\eps}\|h_\eps\|_{L^2(K)}). 
\end{array}
\ee
Again, none of the $\la^\pm_n$ lies in the strip $0<{\rm Re}\,\la<1-\beta$, which implies
that 
\[(v_\eps,\mu_\eps)=(v_\eps^1,\mu_\eps^1)=(v_\eps^\beta,\mu_\eps^\beta).\]
Besides, note that $v_\eps$ and $\mu_\eps$ are supported in $K_b$.
The Theorem 5.2 of \cite{nicaise} ensures that the space $V_\beta^2(K_b)$ is continuously embedded in the interpolate space $[V_0^2(K_b),V_1^2(K_b)]_\theta$ for all
$\theta \in (\beta,1)$.
Since the spaces $V_0^2(K_b)$ and $V_0^1(K_b)$ are continuously embedded in $H^2(K_b)$ and $H^1(K_b)$, respectively, we infer that
$[V_0^2(K_b),V_1^2(K_b)]_\theta$ is continuously embedded in $[H^2(K_b),H^1(K_b)]_\theta=H^{2-\theta}(K_b)$ for all $\theta \in (\beta,1)$.
Since $\beta$ is arbitrarily close to $1-\pi/(2\omega)$, we conclude that 
the space 
$V_\beta^2(K_b)$ is continuously embedded in $H^s(K_b)$ for all $s< 1+\pi/(2\omega)$.
Gathering the estimates (\ref{estim_gh}) and (\ref{estim_beta}), we infer that for all $s< 1 + \pi/(2\omega)$,
\[
\begin{array}{ll}
&\sqrt{\eps} \|\zeta u_\eps\|_{H^s(\Omega)} + \|\zeta \la_\eps\|_{H^s(\Omega)} = \sqrt{\eps} \|v_\eps\|_{H^s(K_b)} + \|\mu_\eps\|_{H^s(K_b)}\\[5pt]
\leq &C(\sqrt{\eps} \|v^\beta_\eps\|_{V_\beta^2(K)} + \|\mu^\beta_\eps\|_{V_\beta^2(K)}) \leq C(\|g_\eps\|_{L^2(K_b)}+\sqrt{\eps}\|h_\eps\|_{L^2(K_b)})\\[5pt]
 \leq & \dsp C(\|u_\eps\|_{H^1(K_b)} + \frac{1}{\sqrt{\eps}}\|\la_\eps\|_{H^1(K_b)} + \|f\|_{L^2(K_b)}).
 \end{array}
 \]
By using the estimate (\ref{estim1}), finally we get $\eps \|\zeta u_\eps\|_{H^s(\Omega)} + \sqrt{\eps} \|\zeta \la_\eps\|_{H^s(\Omega)} \leq C \|f\|_{L^2(\Omega)}$.
\end{proof}
\begin{remark}
By using Corollary \ref{beta1_beta2_C} for $\beta_1=0$ and $\beta_2=1$, we obtain all the singular functions at a corner of mixed type, which are the functions
$r^{\la^\nu_k} (\varphi^\nu_k,\psi^\nu_k)$ which belong to $H^1(K_b)$ but not to $H^2(K_b)$.
The singular functions are readily determined by the value of
${\rm Re}(\la^\nu_k)=(\pi/2+k\pi)/\omega$ for $k \in \mathbb{Z}$: 
\begin{itemize}
\item there is no singularity for $\omega  \leq \pi/2$,
\item singularities are obtained for $k=0$ and $\nu=\pm$ for $\pi/2<\omega \leq 3\pi/2$,
\item singularities are obtained for $k=0$, $k=1$ and $\nu=\pm$ for $\omega>3\pi/2$.
\end{itemize}
Note that this conclusion is very similar to the case of the Laplace equation with mixed Dirichlet-Neumann boundary conditions (see \cite{grisvard_bleu}). 
\end{remark}

\section{Application to error estimates}
\label{application}
In this last section, we use the regularity estimates for solutions of quasi-reversibility problem (\ref{Pr_cauchy_simple}), in particular Theorem \ref{main}, to derive error estimates between the exact solution and the quasi-reversibility solution obtained in the presence of noisy data and with the help of a Finite Element Method.
Let us assume that $\Omega$ is a polygonal domain in two dimensions and
that $u \in H^1(\Omega)$ is the exact solution of problem (\ref{cauchy_strong_simple}) associated with the exact data $f \in L^2(\Omega)$.
In the context of inverse problems, usually $f$ is not available. Only an approximate data $f^\delta \in L^2(\Omega)$ is available, with 
\be \|f^\delta-f\|_{L^2(\Omega)} \leq  \delta,\label{noise}\ee 
where $\delta$ can be viewed as the amplitude of noise.
A natural idea is to solve problem (\ref{Pr_cauchy_simple}) with $f^\delta$ instead of $f$, and a practical way of proceeding is to discretize problem
(\ref{Pr_cauchy_simple}) with the help of a Finite Element Method.
More precisely, we assume that $\Omega$ supports a triangular mesh which is regular in the sense of \cite{ciarlet}, the maximal diameter of each triangle being $h$. 
Let us denote by $V_{0,h}$ and $\tilde{V}_{0,h}$ the finite dimensional subspaces of $V_0$ and $\tilde{V}_0$, respectively, formed by the continuous functions on $\overline{\Omega}$ which are affine on each triangle and which vanish on the sides which belong to $\overline{\Gamma}$  and $\overline{\tilde{\Gamma}}$, respectively.
The discretized version of the mixed formulation of quasi-reversibility (\ref{Pr_cauchy_simple}) is: for 
$\eps >0$, find $(u_{\eps,h},\lambda_{\eps,h}) \in V_{0,h} \times \tilde{V}_{0,h}$ such that for all $(v_h,\mu_h) \in V_{0,h} \times \tilde{V}_{0,h}$,
\begin{equation} 
\left\{\begin{array}{rcl} 
\displaystyle \eps \int_\Omega \nabla u_{\eps,h} \cdot \nabla v_h\,dx + \int_\Omega \nabla v_h\cdot \nabla \lambda_{\eps,h}\,dx&=&0\\[8pt]
\displaystyle \int_\Omega \nabla u_{\eps,h} \cdot \nabla \mu_h\,dx - \int_\Omega \nabla \lambda_{\eps,h}\cdot \nabla \mu_h\,dx&=&\dsp\int_\Omega f\mu_h\,dx.
\end{array} \right. 
\label{Pr_cauchy_simple_h} 
\end{equation}
We denote $(u^\delta_{\eps,h},\lambda^\delta_{\eps,h})$ the solution to problem (\ref{Pr_cauchy_simple_h}) which is associated with the noisy data $f^\delta$ instead of the exact data $f$.
In practice, the solution $u^\delta_{\eps,h}$ is the only approximate function of the exact solution $u$ which is accessible, this is why we are interested in the norm of the discrepancy $u^\delta_{\eps,h}-u$ in the domain $\Omega$.
In this view, we write
\be \|u^\delta_{\eps,h}-u\|_{H^1(\Omega)} \leq \|u^\delta_{\eps,h} -u_{\eps,h}\|_{H^1(\Omega)}+ \|u_{\eps,h}-u_\eps\|_{H^1(\Omega)}+ \|u_\eps-u\|_{H^1(\Omega)}\label{triangular},\ee
and estimate each term of this decomposition. The first term to estimate corresponds to the error due to the noisy data.
Let us prove the following lemma.
\begin{lemma}
There exists a constant $C>0$ which depends only on the geometry such that
\be \|u^\delta_{\eps,h} -u_{\eps,h}\|_{H^1(\Omega)} \leq C\frac{\delta}{\sqrt{\eps}}.\label{first_estimate}\ee
\end{lemma}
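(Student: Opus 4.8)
The plan is to exploit the fact that $(u^\delta_{\eps,h},\lambda^\delta_{\eps,h})$ and $(u_{\eps,h},\lambda_{\eps,h})$ solve one and the same discrete system (\ref{Pr_cauchy_simple_h}), only the right-hand side of the second equation differing (it involves $f^\delta$ in one case and $f$ in the other). First I would set $e_u:=u^\delta_{\eps,h}-u_{\eps,h}\in V_{0,h}$ and $e_\lambda:=\lambda^\delta_{\eps,h}-\lambda_{\eps,h}\in\tilde{V}_{0,h}$; by linearity, $(e_u,e_\lambda)$ satisfies, for all $(v_h,\mu_h)\in V_{0,h}\times\tilde{V}_{0,h}$,
\[
\left\{\begin{array}{rcl}
\displaystyle\eps\int_\Omega \nabla e_u\cdot\nabla v_h\,dx+\int_\Omega \nabla v_h\cdot\nabla e_\lambda\,dx&=&0\\[8pt]
\displaystyle\int_\Omega \nabla e_u\cdot\nabla\mu_h\,dx-\int_\Omega \nabla e_\lambda\cdot\nabla\mu_h\,dx&=&\displaystyle\int_\Omega (f^\delta-f)\,\mu_h\,dx.
\end{array}\right.
\]
This is exactly the discrete quasi-reversibility problem with source term $f^\delta-f$ and homogeneous boundary value for the principal unknown, so the a priori estimate of Theorem \ref{base_simple} applies verbatim at the discrete level.

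Concretely, I would reproduce the energy argument underlying (\ref{estim1}): taking $v_h=e_u$ in the first equation and $\mu_h=e_\lambda$ in the second and subtracting gives
\[
\eps\int_\Omega|\nabla e_u|^2\,dx+\int_\Omega|\nabla e_\lambda|^2\,dx=-\int_\Omega (f^\delta-f)\,e_\lambda\,dx.
\]
By the Cauchy--Schwarz inequality, the Poincar\'e inequality on $\tilde{V}_0$ (recall $e_\lambda|_{\tilde{\Gamma}}=0$), and the noise bound (\ref{noise}), the right-hand side is bounded by $C\delta\,\|e_\lambda\|$, where $\|\cdot\|$ is the gradient semi-norm. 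Hence $\|e_\lambda\|\le C\delta$, and then $\eps\|e_u\|^2\le C\delta\|e_\lambda\|\le C\delta^2$, i.e. $\|e_u\|\le C\delta/\sqrt{\eps}$. Using the equivalence between $\|\cdot\|$ and the full $H^1(\Omega)$ norm on $V_0$ yields (\ref{first_estimate}).

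The point worth emphasizing --- and essentially the only subtlety --- is that this is purely a coercivity (Lax--Milgram) argument applied to the bilinear form $A_\eps$ restricted to the conforming subspace $V_{0,h}\times\tilde{V}_{0,h}\subset V_0\times\tilde{V}_0$, so the constant $C$ is inherited from the continuous level and is independent of $h$; no discrete inf--sup condition is needed. I do not expect a genuine obstacle here: the bound is uniform in $h$ and the only $\eps$-dependence, the factor $1/\sqrt{\eps}$, reflects the degeneracy of the first equation, exactly as in Theorem \ref{base_simple}.
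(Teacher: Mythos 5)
Your proof is correct and follows essentially the same route as the paper: subtract the two discrete systems, test with the difference itself (equivalently, use coercivity of the bilinear form $A_\eps$ on the conforming subspace), and conclude via Cauchy--Schwarz, Poincar\'e and the noise bound (\ref{noise}). The intermediate observation that $\|e_\lambda\|\leq C\delta$ and hence $\eps\|e_u\|^2\leq C\delta^2$ is exactly the paper's argument made slightly more explicit.
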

\begin{proof}
By reusing the bilinear form $A_\eps$ introduced in the proof of Theorem \ref{base}, $(u^\delta_{\eps,h},\la^\delta_{\eps,h})$ and $(u_{\eps,h},\la_{\eps,h})$ are solutions in $V_{0,h} \times \tilde{V}_{0,h}$ to the weak problems: for all $(v_h,\mu_h) \in V_{0,h} \times \tilde{V}_{0,h}$,
\[A_\eps((u^\delta_{\eps,h},\la^\delta_{\eps,h});(v_h,\mu_h))=-\int_\Omega f^\delta \mu_h\,dx,\qquad A_\eps((u_{\eps,h},\la_{\eps,h});(v_h,\mu_h))=-\int_\Omega f \mu_h\,dx.\]
Taking the difference, setting $(v_h,\mu_h)=(u^\delta_{\eps,h}-u_{\eps,h},\la^\delta_{\eps,h}-\la_{\eps,h})$, we get
\[\eps \|u^\delta_{\eps,h}-u_{\eps,h}\|^2+ \|\la^\delta_{\eps,h}-\la_{\eps,h}\|^2\leq \|f^\delta-f\|_{L^2(\Omega)} \|\la^\delta_{\eps,h}-\la_{\eps,h}\|_{L^2(\Omega)},\]
which completes the proof by using the Poincar\'e inequality and (\ref{noise}).  
\end{proof}
\noindent The second term of (\ref{triangular}) corresponds to the error due to discretization.
Let us prove the following lemma, which is a consequence of Theorem \ref{main}.
\begin{lemma}\label{Second}
There is a constant $C>0$ which depends only on the geometry and on $u$ such that
\be \|u_{\eps,h} -u_{\eps}\|_{H^1(\Omega)} \leq C\,\frac{h^{s-1}}{\eps},\label{second_estimate}\ee
where $s$ is given in the statement of Theorem \ref{main}.
\end{lemma}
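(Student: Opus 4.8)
The plan is to run a C\'ea-type argument for the coupled variational problem, but to replace the crude energy estimate by a weighted one that keeps the power of $\eps$ minimal. Reusing the bilinear form $A_\eps$ from the proof of Theorem \ref{base}, one checks from (\ref{Pr_cauchy_simple}) that $A_\eps((u_\eps,\la_\eps);(v,\mu))=-\int_\Omega f\mu\,dx$ for all $(v,\mu)\in V_0\times\tilde V_0$, and likewise $A_\eps((u_{\eps,h},\la_{\eps,h});(v_h,\mu_h))=-\int_\Omega f\mu_h\,dx$ for all $(v_h,\mu_h)\in V_{0,h}\times\tilde V_{0,h}$. Since $V_{0,h}\times\tilde V_{0,h}\subset V_0\times\tilde V_0$, subtracting the two formulations yields the Galerkin orthogonality $A_\eps((e_u,e_\la);(v_h,\mu_h))=0$ for all $(v_h,\mu_h)\in V_{0,h}\times\tilde V_{0,h}$, where $e_u=u_\eps-u_{\eps,h}$ and $e_\la=\la_\eps-\la_{\eps,h}$. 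Using the identity $A_\eps((w,\sigma);(w,\sigma))=\eps\|w\|^2+\|\sigma\|^2$ together with this orthogonality, for an arbitrary $(w_h,\sigma_h)\in V_{0,h}\times\tilde V_{0,h}$ one gets, with $\eta_u=u_\eps-w_h$ and $\eta_\la=\la_\eps-\sigma_h$, the identity $\eps\|e_u\|^2+\|e_\la\|^2=A_\eps((e_u,e_\la);(\eta_u,\eta_\la))$.

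The next step is to expand the right-hand side as $\eps(\nabla e_u,\nabla\eta_u)+(\nabla\eta_u,\nabla e_\la)-(\nabla e_u,\nabla\eta_\la)+(\nabla e_\la,\nabla\eta_\la)$ and to bound each term by Young's inequality, choosing the weights so that the $\|e_u\|^2$ contributions carry a coefficient $\le\eps/4$ and the $\|e_\la\|^2$ contributions a coefficient $\le1/4$; the point is to split the third term as $\|e_u\|\,\|\eta_\la\|=(\sqrt\eps\,\|e_u\|)(\eps^{-1/2}\|\eta_\la\|)$. Absorbing $\tfrac\eps4\|e_u\|^2+\tfrac14\|e_\la\|^2$ into the left-hand side then gives, for $\eps\le1$, an estimate of the form $\|e_u\|^2\le C\eps^{-1}\|\eta_u\|^2+C\eps^{-2}\|\eta_\la\|^2$, hence $\|e_u\|\le C\eps^{-1/2}\|\eta_u\|+C\eps^{-1}\|\eta_\la\|$. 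Now take $w_h=I_hu_\eps$ and $\sigma_h=I_h\la_\eps$, the Lagrange interpolants of $u_\eps$ and $\la_\eps$ on the triangulation: these are well defined because the exponent $s$ of Theorem \ref{main} always satisfies $s>1$, so that $H^s(\Omega)$ embeds into $C^0(\overline\Omega)$ in dimension $2$; moreover $I_hu_\eps\in V_{0,h}$ and $I_h\la_\eps\in\tilde V_{0,h}$ since $u_\eps$ vanishes on $\overline\Gamma$ and $\la_\eps$ on $\overline{\tilde\Gamma}$. The standard interpolation estimate on a regular mesh \cite{ciarlet} then gives $\|\eta_u\|_{H^1(\Omega)}\le Ch^{s-1}\|u_\eps\|_{H^s(\Omega)}$ and $\|\eta_\la\|_{H^1(\Omega)}\le Ch^{s-1}\|\la_\eps\|_{H^s(\Omega)}$.

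Finally I would insert the uniform regularity bounds of Theorem \ref{main} for compatible data, namely $\|u_\eps\|_{H^s(\Omega)}\le C\eps^{-1/2}\|u\|_{H^1(\Delta,\Omega)}$ and $\|\la_\eps\|_{H^s(\Omega)}\le C\|u\|_{H^1(\Delta,\Omega)}$. This makes both error contributions balance: $\eps^{-1/2}\|\eta_u\|_{H^1(\Omega)}\le Ch^{s-1}\eps^{-1}\|u\|_{H^1(\Delta,\Omega)}$ and $\eps^{-1}\|\eta_\la\|_{H^1(\Omega)}\le Ch^{s-1}\eps^{-1}\|u\|_{H^1(\Delta,\Omega)}$, whence, using the equivalence of the seminorm $\|\cdot\|$ with the full $H^1(\Omega)$-norm on $V_0$, $\|u_{\eps,h}-u_\eps\|_{H^1(\Omega)}\le Ch^{s-1}/\eps$ with $C$ depending only on the geometry and on $\|u\|_{H^1(\Delta,\Omega)}$, that is on $u$. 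The routine ingredients are the Galerkin orthogonality and the interpolation estimate; the only genuinely delicate point is the tuning of the weights in Young's inequality, which must be matched to the asymmetric $\eps$-scaling of $u_\eps$ and $\la_\eps$ given by Theorem \ref{main} so that the two contributions $\eps^{-1/2}\|\eta_u\|$ and $\eps^{-1}\|\eta_\la\|$ come out with the same power $\eps^{-1}$.
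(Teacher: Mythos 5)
Your proof is correct and follows essentially the same route as the paper: Galerkin orthogonality for $A_\eps$, a C\'ea-type bound with the coercivity identity $\eps\|e_u\|^2+\|e_\la\|^2$, interpolation estimates for $u_\eps$ and $\la_\eps$ in $H^s(\Omega)$, and the uniform regularity bounds $\|u_\eps\|_{H^s(\Omega)}\le C\eps^{-1/2}$, $\|\la_\eps\|_{H^s(\Omega)}\le C$ from Theorem \ref{main}. The only cosmetic difference is that you absorb the cross terms via Young's inequality with $\eps$-tuned weights, whereas the paper groups them directly after Cauchy--Schwarz; both yield the same weighted bound $\sqrt{\eps}\,\|e_u\|\le C h^{s-1}/\sqrt{\eps}$ and hence the stated estimate.
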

\begin{proof}
The proof relies in particular on C\'ea's Lemma. Since we need a uniform estimate with respect to $\eps$, we detail the proof.
For all $(v_h,\mu_h) \in V_{0,h} \times \tilde{V}_{0,h}$, we have
\[A_\eps((u_{\eps}-u_{\eps,h},\la_\eps-\la_{\eps,h});(v_h,\mu_h))=0.\]
This implies that for all $(v_h,\mu_h) \in V_{0,h} \times \tilde{V}_{0,h}$, 
\[A_\eps((u_{\eps}-u_{\eps,h},\la_\eps-\la_{\eps,h});(u_{\eps}-u_{\eps,h},\la_\eps-\la_{\eps,h}))=A_\eps((u_{\eps}-u_{\eps,h},\la_\eps-\la_{\eps,h});(u_{\eps}-v_h,\la_\eps-\mu_h)),\]
hence
\[
\begin{array}{ll}
&\dsp  A_\eps((u_{\eps}-u_{\eps,h},\la_\eps-\la_{\eps,h});(u_{\eps}-u_{\eps,h},\la_\eps-\la_{\eps,h}))\\[9pt]
\leq & \dsp \inf_{(v_h,\mu_h) \in V_{0,h} \times \tilde{V}_{0,h}} \left|A_\eps((u_{\eps}-u_{\eps,h},\la_\eps-\la_{\eps,h});(u_{\eps}-v_h,\la_\eps-\mu_h))\right|.
\end{array}
\]
But on the one hand, we have
\[A_\eps((u_{\eps}-u_{\eps,h},\la_\eps-\la_{\eps,h});(u_{\eps}-u_{\eps,h},\la_\eps-\la_{\eps,h}))=\eps\|u_{\eps}-u_{\eps,h}\|^2+\|\la_\eps-\la_{\eps,h}\|^2
\]
while on the other hand, there holds
\[
\begin{array}{ll}
&\dsp\inf_{(v_h,\mu_h) \in V_{0,h} \times \tilde{V}_{0,h}}\left|A_\eps((u_{\eps}-u_{\eps,h},\la_\eps-\la_{\eps,h});(u_{\eps}-v_h,\la_\eps-\mu_h))\right| \\[9pt]
\leq & \dsp(\eps \|u_\eps-u_{\eps,h}\| + \|\la_\eps-\la_{\eps,h}\|) \inf_{v_h \in V_{0,h}}\|u_\eps-v_h\| + (\|u_\eps-u_{\eps,h}\| + \|\la_\eps-\la_{\eps,h}\|) \inf_{\mu_h \in \tilde{V}_{0,h}}\|\la_\eps-\mu_h\|.
\end{array}
\]
By using the classical interpolation error estimates in $H^s(\Omega)$ for $s>1$ (see \cite{grisvard_bleu}), we know that there exists a constant $C>0$ which depends only on the geometry such that
\[ \inf_{v_h \in V_{0,h}}\|u_\eps-v_h\| \leq C\,h^{s-1}\|u_\eps\|_{H^s(\Omega)},\qquad\qquad \inf_{\mu_h \in \tilde{V}_{0,h}}\|\la_\eps-\mu_h\| \leq C\,h^{s-1}\|\la_\eps\|_{H^s(\Omega)}.\]
Theorem \ref{main} in the case of exact data $f$ implies that there is a constant $C>0$ which depends on the geometry and on $u$ such that
\[\|u_\eps\|_{H^{s}(\Omega)} \leq C\,\frac{1}{\sqrt{\eps}}\qquad\mbox{ and }\qquad  \|\la_\eps\|_{H^{s}(\Omega)} \leq C.\]
From the three above estimates, we get
\[
\begin{array}{ll}
&\dsp \inf_{(v_h,\mu_h) \in V_{0,h} \times \tilde{V}_{0,h}}\left|A_\eps((u_{\eps}-u_{\eps,h},\la_\eps-\la_{\eps,h});(u_{\eps}-v_h,\la_\eps-\mu_h))\right| \\[9pt]
\leq & \dsp C\,\frac{h^{s-1}}{\sqrt{\eps}}(\eps \|u_\eps-u_{\eps,h}\| + \|\la_\eps-\la_{\eps,h}\|)+ C\,h^{s-1}(\|u_\eps-u_{\eps,h}\| + \|\la_\eps-\la_{\eps,h}\|) \\[9pt]
\leq &\dsp C\, \frac{h^{s-1}}{\sqrt{\eps}}(\sqrt{\eps}\|u_\eps-u_{\eps,h}\| + \|\la_\eps-\la_{\eps,h}\|).
\end{array}
\]
Eventually we end up with
\[\eps\|u_{\eps}-u_{\eps,h}\|^2+\|\la_\eps-\la_{\eps,h}\|^2 \leq  C\, \frac{h^{s-1}}{\sqrt{\eps}}(\eps\|u_\eps-u_{\eps,h}\|^2 + \|\la_\eps-\la_{\eps,h}\|^2)^{1/2},\]
which completes the proof.
\end{proof}
\noindent Estimating the third term in (\ref{triangular}) is strongly related to the stability of the Cauchy problem for the Laplace equation, a topic which has a long history since the pioneering paper \cite{Hada02} (see e.g. \cite{Payn60,Payn70,AxBE06,benbelgacem,alessandrini_rondi_rosset_vessella,phung,bourgeois2,BoDa10}).
It is well-known that since such problem is exponentially ill-posed, the corresponding stability estimate is at best of logarithmic type (see for example \cite{bourgeois2}). To our best knowledge, an estimate of $\eta(\eps):=\|u_\eps-u\|_{H^1(\Omega)}$, which tends to $0$ when $\eps$ tends to $0$ in view of Theorem \ref{base_simple}, is unknown.
However, a logarithmic stability estimate for $\|u_\eps-u\|_{L^2(\Omega)}$ can be derived from Theorem 1.9 in \cite{alessandrini_rondi_rosset_vessella}.
\begin{lemma}
\label{logarithmic}
There exists a constant $C>0$ which depends only on the geometry and on $u$ and a constant $\mu \in (0,1)$ which depends only on the geometry such that
\[\|u_\eps-u\|_{L^2(\Omega)} \leq C \frac{1}{\big(\log(1/\eps)\big)^\mu}.\]
\end{lemma}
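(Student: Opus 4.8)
The plan is to reduce the claimed bound to a conditional (logarithmic) stability estimate for the \emph{homogeneous} Cauchy problem, applied to a suitably chosen harmonic function, and then to invoke Theorem 1.9 of \cite{alessandrini_rondi_rosset_vessella}. First I would use the strong form (\ref{Pr_cauchy_simple_strong}), which gives $\Delta u_\eps=-f/(1+\eps)$ and $\Delta\la_\eps=\eps\,f/(1+\eps)$ in $\Om$. Since $\Delta\big(u/(1+\eps)\big)=\Delta u/(1+\eps)=-f/(1+\eps)$, the function
\[W_\eps:=u_\eps-\frac{1}{1+\eps}\,u\]
is harmonic in $\Om$. Moreover, as $u_\eps=0$ and $u=0$ on $\Gamma$ we get $W_\eps=0$ on $\Gamma$; and as $\partial_\nu u_\eps-\partial_\nu\la_\eps=0$ and $\partial_\nu u=0$ on $\Gamma$ we get $\partial_\nu W_\eps=\partial_\nu\la_\eps$ on $\Gamma$. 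Thus $W_\eps$ solves a homogeneous Cauchy problem on $\Gamma$ whose only nontrivial datum is the Neumann trace $\partial_\nu\la_\eps|_\Gamma$.

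Next I would quantify the two ingredients required by a conditional stability estimate: the energy of $W_\eps$ and the size of its Cauchy data. On the one hand, Theorem \ref{base_simple} in the compatible case yields $\|u_\eps\|_{H^1(\Om)}\le C\|u\|_{H^1(\Om)}$ and $\|\la_\eps\|_{H^1(\Om)}\le C\sqrt{\eps}\,\|u\|_{H^1(\Om)}$, hence $\|W_\eps\|_{H^1(\Om)}\le C\|u\|_{H^1(\Om)}$ uniformly in $\eps$. On the other hand, $\Delta\la_\eps=\eps\,f/(1+\eps)\in L^2(\Om)$ with $\|\Delta\la_\eps\|_{L^2(\Om)}\le\eps\|f\|_{L^2(\Om)}$, so by the standard trace theorem on $\{v\in H^1(\Om):\Delta v\in L^2(\Om)\}$,
\[\|\partial_\nu\la_\eps\|_{H^{-1/2}(\Gamma)}\le C\big(\|\la_\eps\|_{H^1(\Om)}+\|\Delta\la_\eps\|_{L^2(\Om)}\big)\le C\sqrt{\eps},\]
where from now on $C$ is allowed to depend on $u$ (recall $f=-\Delta u$, so $\|f\|_{L^2(\Om)}$ is controlled by $u$).

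Then I would apply Theorem 1.9 of \cite{alessandrini_rondi_rosset_vessella} to $W_\eps$: being a harmonic function whose $H^1(\Om)$ norm is bounded by a constant and whose Cauchy data on $\Gamma$ are bounded by $\delta:=C\sqrt{\eps}$, it satisfies, for $\eps$ small, $\|W_\eps\|_{L^2(\Om)}\le C\,\big(\log(1/\delta)\big)^{-\mu}\le C\,\big(\log(1/\eps)\big)^{-\mu}$, with $\mu\in(0,1)$ depending only on the geometry and $C$ depending on the geometry and on $u$. Finally the triangle inequality gives
\[\|u_\eps-u\|_{L^2(\Om)}\le\|W_\eps\|_{L^2(\Om)}+\frac{\eps}{1+\eps}\,\|u\|_{L^2(\Om)}\le C\,\big(\log(1/\eps)\big)^{-\mu}+\eps\,\|u\|_{L^2(\Om)},\]
and the $\eps$-term is absorbed into the logarithmic one for $\eps$ small. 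For $\eps$ in any interval $[\eps_0,1)$ the estimate is immediate from the uniform bound $\|u_\eps-u\|_{L^2(\Om)}\le C\|u\|_{H^1(\Om)}$ furnished by Theorem \ref{base_simple} together with the fact that $\big(\log(1/\eps)\big)^{-\mu}$ is then bounded below by a positive constant; combining the two ranges yields the statement on all of $(0,1)$.

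The main obstacle is not the elementary reduction above but the invocation of the cited conditional stability theorem: one must check that its hypotheses on the regularity of $\partial\Om$ and of the portion $\Gamma$ carrying the Cauchy data, and on the functional-analytic setting of these data, are compatible with the Lipschitz (here polygonal) geometry under consideration, and that the version used delivers a \emph{global} $L^2(\Om)$ estimate up to the boundary rather than merely an interior one. The logarithmic modulus of continuity in that theorem is precisely what produces the exponent $\mu$, and it is on its geometric nature that the dependence of $\mu$ and $C$ claimed in the statement rests.
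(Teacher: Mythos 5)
Your proof is correct and follows essentially the same route as the paper: both derive the bounds $\|\la_\eps\|_{H^1(\Om)}\le C\sqrt{\eps}$ and $\|\Delta\la_\eps\|_{L^2(\Om)}\le C\eps$ from (\ref{estim2}), conclude that the Cauchy data of the error on $\Gamma$ are $O(\sqrt{\eps})$ in $H^{1/2}(\Gamma)\times H^{-1/2}(\Gamma)$, and invoke Theorem 1.9 of \cite{alessandrini_rondi_rosset_vessella}. The only (harmless) difference is that the paper applies that theorem directly to $u_\eps-u$, which has a small residual Laplacian $-\eps f/(1+\eps)$, whereas you first subtract $u/(1+\eps)$ to make the auxiliary function exactly harmonic and then absorb the extra $O(\eps)$ term; both variants rest on the same conditional stability result and yield the same logarithmic rate.
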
 
\begin{proof}
From (\ref{cauchy_strong_simple}) and (\ref{Pr_cauchy_simple_strong}), the functions $u_\eps-u$ and $\la_\eps$ satisfy
\[\left\{
\begin{array}{rcll}
 -\Delta (u_\eps-u)  &=& -\eps f/(1+\eps)&\mbox{in } \Omega\\
 u_\eps-u&=&0  & \mbox{on }\Gamma \\
 \partial_\nu (u_\eps-u)&=& \partial_\nu \la_\eps & \mbox{on }\Gamma.
\end{array}
\right.
\]
By using the estimate (\ref{estim2}) of Theorem \ref{base_simple}, we get
\[\|u_\eps-u\|_{H^1(\Omega)} \leq C,\qquad \|\Delta (u_\eps-u)\|_{L^2(\Omega)} \leq C\,\eps,\qquad \|\partial_\nu (u_\eps-u)\|_{H^{-1/2}(\Gamma)} \leq C\, \sqrt{\eps}.\]
By plugging these estimates in Theorem 1.9 of \cite{alessandrini_rondi_rosset_vessella}, we obtain the result.
\end{proof}
\noindent In conclusion, by gathering (\ref{triangular}), (\ref{first_estimate}) and (\ref{second_estimate}),
we end up with the final estimate
\be \|u^\delta_{\eps,h}-u\|_{H^1(\Omega)} \leq C\,\frac{\delta}{\sqrt{\eps}}+C\,\frac{h^{s-1}}{\eps}+ \eta(\eps),\label{estim_final}\ee
where $s$ is given in the statement of Theorem \ref{main} and 
$\eta$ converges to $0$ when $\eps$ tends to $0$ at best with a logarithmic convergence rate in view of Lemma \ref{logarithmic}.
An important application of the estimate (\ref{estim_final}) is that when $\delta \rightarrow 0$, we have to choose $\eps=\eps(\delta)$ and $h=h(\eps)$ such that
\[\lim_{\delta \rightarrow 0}\frac{\delta}{\sqrt{\eps(\delta)}}=0,\qquad\qquad \lim_{\eps \rightarrow 0} \frac{h^{s-1}(\eps)}{\eps}=0\]
in order to obtain a good approximation of the exact solution from noisy data and by using our Finite Element Method.
\section*{Appendix A: A basic uniform estimate}
For $\la \in \mathbb{C}$, we introduce the symbol $\mathscr{J}(\la) : \mathcal{D}
(\mathscr{J}) \longrightarrow L^2(0,\omega)$ where 
\[
\mathcal{D}(\mathscr{J})=\{ u \in 
H_{0,0}^1(0,\omega) \cap H^{2}(0,\omega),\,\, d_\theta u(\omega)=0\}\]
and
\[\mathscr{J}(\la)\varphi=-(\la^2+d^2_\theta)\varphi.\]
The goal of the appendix is to establish the following result.
\begin{proposition}
\label{Estimate_appendix}
If ${\rm Re}\,\la \notin\{(\pi/2+n\pi)/\om,\,n\in\mathbb{Z}\}$, then $\mathscr{J}$ is an isomorphism and if $\varphi \in \mathcal{D}(\mathscr{J})$ satisfies
$\mathscr{J}(\varphi)=g \in L^2(0,\omega)$,
we have 
the estimate
\be \|d^2_\theta\varphi\|_{L^2(0,\omega)} +|\la|^2 \|\varphi\|_{L^2(0,\omega)} \leq C\,\|g\|_{L^2(0,\omega)},\label{estimate}\ee
where $C>0$ is independent of $g$ and ${\rm Im}\,\la$.
\end{proposition}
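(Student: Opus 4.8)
The plan is to obtain the isomorphism property from an explicit solvability computation and the estimate (\ref{estimate}) from an energy argument, after splitting the range of $\tau:={\rm Im}\,\la$ into a large part, where coercivity is available, and a bounded part, where it is not. For solvability, I would note that $-(\la^2+d_\theta^2)\varphi=g$ with $\varphi(0)=0$ fixes $\varphi$ up to the homogeneous solution $c\,\sin(\la\theta)$, and that imposing the last boundary condition $d_\theta\varphi(\omega)=0$ reduces to a scalar equation whose coefficient is $\la\cos(\la\omega)$. A variation-of-constants computation (the case $\la=0$ being handled directly) shows that $\mathscr{J}(\la)$ is a bijection precisely when $\cos(\la\omega)\ne 0$, i.e. when $\la\notin\{(\pi/2+n\pi)/\om,\ n\in\Z\}$. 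Since this exceptional set lies in $\R$, the hypothesis ${\rm Re}\,\la\notin\{(\pi/2+n\pi)/\om\}$ already forces $\la$ to be non-exceptional; and because $-d_\theta^2$ with domain $\mathcal{D}(\mathscr{J})$ is invertible onto $L^2(0,\omega)$ with compact inverse, $\mathscr{J}(\la)=-d_\theta^2-\la^2\,\mathrm{Id}$ is Fredholm of index zero, so the bijection is a topological isomorphism and $\la\mapsto\mathscr{J}(\la)^{-1}$ is holomorphic on the resolvent set.

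For the estimate, I would multiply $\mathscr{J}(\la)\varphi=g$ by $\overline{\varphi}$ and integrate by parts over $(0,\omega)$; the boundary terms $d_\theta\varphi(0)\overline{\varphi}(0)$ and $d_\theta\varphi(\omega)\overline{\varphi}(\omega)$ vanish thanks to $\varphi(0)=0$ and $d_\theta\varphi(\omega)=0$, leaving $\|d_\theta\varphi\|_{L^2(0,\omega)}^2-\la^2\|\varphi\|_{L^2(0,\omega)}^2=\int_0^\omega g\,\overline{\varphi}\,d\theta$. Writing $\la=\beta+i\tau$ and taking real parts, so that ${\rm Re}(\la^2)=\beta^2-\tau^2$, this gives
\[
\|d_\theta\varphi\|_{L^2(0,\omega)}^2+(\tau^2-\beta^2)\,\|\varphi\|_{L^2(0,\omega)}^2\le\|g\|_{L^2(0,\omega)}\,\|\varphi\|_{L^2(0,\omega)}.
\]
For $|\tau|\ge\tau_\beta:=\max(\sqrt2\,|\beta|,1)$ one has $\tau^2-\beta^2\ge\tau^2/2$, hence $\|\varphi\|_{L^2}\le 2\|g\|_{L^2}/\tau^2$; since $|\la|^2=\beta^2+\tau^2\le\tfrac32\tau^2$ in that range, this yields $|\la|^2\|\varphi\|_{L^2}\le 3\|g\|_{L^2}$, and then $\|d_\theta^2\varphi\|_{L^2}=\|\la^2\varphi+g\|_{L^2}\le 4\|g\|_{L^2}$ directly from the equation. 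Thus (\ref{estimate}) holds with an absolute constant whenever $|\tau|\ge\tau_\beta$.

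It then remains to treat the bounded range $|\tau|\le\tau_\beta$, where $\la$ stays in the compact segment $\Sigma_\beta:=\{\beta+i\tau:\ |\tau|\le\tau_\beta\}$. Since the exceptional set is real and $\beta$ is excluded by hypothesis, $\Sigma_\beta$ is contained in the resolvent set of $\mathscr{J}$, so by continuity of $\la\mapsto\mathscr{J}(\la)^{-1}\in\mathcal{L}(L^2(0,\omega),H^2(0,\omega))$ its norm is bounded on $\Sigma_\beta$ by some $M_\beta<\infty$; together with $|\la|^2\le\beta^2+\tau_\beta^2$ on $\Sigma_\beta$ this gives $\|d_\theta^2\varphi\|_{L^2}+|\la|^2\|\varphi\|_{L^2}\le(1+\beta^2+\tau_\beta^2)M_\beta\|g\|_{L^2}$. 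Combining the two regimes proves (\ref{estimate}) with $C=C(\beta)$ depending only on ${\rm Re}\,\la$ and not on ${\rm Im}\,\la$, as required. I expect the bounded regime to be the main obstacle: there ${\rm Re}(\la^2)$ has the wrong sign, the coercive energy estimate breaks down, and one must fall back on the qualitative invertibility of $\mathscr{J}(\la)$ together with a compactness argument — which is exactly the point where the hypothesis ${\rm Re}\,\la\notin\{(\pi/2+n\pi)/\om\}$ is indispensable.
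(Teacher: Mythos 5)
Your proposal is correct, and its skeleton is the one the paper uses: a quantitative estimate for $|{\rm Im}\,\la|$ large, followed by invertibility plus continuity of $\la\mapsto\mathscr{J}(\la)^{-1}$ and compactness of the remaining segment $\{\beta+i\tau,\ |\tau|\le\tau_\beta\}$, which is indeed where the hypothesis ${\rm Re}\,\la\notin\{(\pi/2+n\pi)/\om\}$ is really needed. The one place where you genuinely diverge is the large-$|\tau|$ regime. The paper gets there in two steps: Lemma \ref{itau} proves the estimate with constant $3$ on the imaginary axis via Lax--Milgram, and Lemma \ref{papillon} then propagates it to a sector $|{\rm Re}\,\la|<\delta\,|{\rm Im}\,\la|$ by a perturbation argument, writing $\la=\pm i|\la|e^{i\psi}$, comparing $\mathscr{J}(\la)\varphi$ with $\mathscr{J}(\pm i|\la|)\varphi$ and absorbing the error $|\tilde\la^2-\la^2|\,\|\varphi\|_{L^2}$ for $\delta$ small. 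You instead test the equation directly against $\overline\varphi$ and use ${\rm Re}(-\la^2)=\tau^2-\beta^2\ge\tau^2/2$ for $|\tau|\ge\sqrt2\,|\beta|$; this collapses the two lemmas into one elementary coercivity computation with explicit constants ($3$ and $4$, amusingly the same ones), and it makes transparent that the threshold $\tau_\beta$ depends only on $\beta={\rm Re}\,\la$, which is exactly the uniformity the proposition asserts. The paper's sector formulation is slightly more flexible (it gives a region $|{\rm Re}\,\la|<\delta|{\rm Im}\,\la|$ valid uniformly over all lines $\ell_\beta$ at once, with $\delta$ independent of $\beta$), but for the statement at hand, where $\beta$ is fixed, your route is the shorter and cleaner of the two. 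Both arguments rely in the same way on the analytic Fredholm alternative for the isomorphism property and for the boundedness of the resolvent on the compact segment, so no gap there either.
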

\noindent To prove proposition \ref{Estimate_appendix}, we need three lemmas. 
We first consider a simple situation when $\la$ is purely imaginary.
\begin{lemma}
\label{itau}
If $\la=i\tau$, $\tau \in \mathbb{R}$, the mapping $\mathscr{J}$ is an isomorphism and if $\varphi \in \mathcal{D}(\mathscr{J})$ satisfies $\mathscr{J}(\varphi)=g \in L^2(0,\omega)$, we have
\[ \|d^2_\theta\varphi\|_{L^2(0,\omega)} +|\la|^2 \|\varphi\|_{L^2(0,\omega)} \leq 3\,\|g\|_{L^2(0,\omega)}.\]
\end{lemma}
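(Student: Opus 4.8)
The plan is to observe that, for $\la=i\tau$, the equation $\mathscr{J}(\varphi)=g$ becomes the classical coercive mixed Dirichlet--Neumann ODE problem $-d_\theta^2\varphi+\tau^2\varphi=g$ on $(0,\omega)$, together with $\varphi(0)=0$ and $d_\theta\varphi(\omega)=0$. First I would introduce its variational formulation: find $\varphi\in H^1_{0,0}(0,\omega)$ such that, for all $v\in H^1_{0,0}(0,\omega)$,
\[
\int_0^\omega\big(d_\theta\varphi\,d_\theta\overline{v}+\tau^2\,\varphi\,\overline{v}\big)\,d\theta=\int_0^\omega g\,\overline{v}\,d\theta .
\]

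Next I would establish well-posedness by the Lax--Milgram lemma. When $\tau\neq 0$ the bilinear form is obviously coercive on $H^1_{0,0}(0,\omega)$; when $\tau=0$ coercivity still holds because $\varphi(0)=0$, so Poincar\'e's inequality on $H^1_{0,0}(0,\omega)$ applies. Hence for every $g\in L^2(0,\omega)$ the variational problem has a unique solution $\varphi\in H^1_{0,0}(0,\omega)$. One-dimensional elliptic regularity then yields $d_\theta^2\varphi=\tau^2\varphi-g\in L^2(0,\omega)$, so $\varphi\in H^2(0,\omega)$, and integrating by parts in the variational identity against test functions that do not vanish at $\theta=\omega$ recovers $d_\theta\varphi(\omega)=0$. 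Thus $\varphi\in\mathcal{D}(\mathscr{J})$ and $\mathscr{J}$ is an isomorphism.

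Finally, for the quantitative bound I would take $v=\varphi$ in the variational formulation, which gives $\|d_\theta\varphi\|_{L^2(0,\omega)}^2+\tau^2\|\varphi\|_{L^2(0,\omega)}^2=\int_0^\omega g\,\overline{\varphi}\,d\theta$, a real nonnegative quantity. By Cauchy--Schwarz, $\tau^2\|\varphi\|_{L^2(0,\omega)}^2\le\|g\|_{L^2(0,\omega)}\|\varphi\|_{L^2(0,\omega)}$, hence $|\la|^2\|\varphi\|_{L^2(0,\omega)}=\tau^2\|\varphi\|_{L^2(0,\omega)}\le\|g\|_{L^2(0,\omega)}$ (this inequality being trivial when $\tau=0$). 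Using the equation once more, $\|d_\theta^2\varphi\|_{L^2(0,\omega)}=\|\tau^2\varphi-g\|_{L^2(0,\omega)}\le\tau^2\|\varphi\|_{L^2(0,\omega)}+\|g\|_{L^2(0,\omega)}\le 2\|g\|_{L^2(0,\omega)}$. Adding the two estimates gives the claimed inequality with constant $3$. There is no genuine obstacle here; the only points deserving a little care are the separate treatment of $\tau=0$ for coercivity and keeping track of the numerical constant.
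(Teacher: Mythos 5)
Your proof is correct and follows essentially the same route as the paper: Lax--Milgram (with Poincar\'e to cover $\tau=0$) for well-posedness, the energy identity with $v=\varphi$ plus Cauchy--Schwarz to get $|\la|^2\|\varphi\|_{L^2(0,\omega)}\le\|g\|_{L^2(0,\omega)}$, and the equation $d_\theta^2\varphi=\tau^2\varphi-g$ to bound the second derivative, yielding the constant $3$. Your extra care with the $\tau=0$ case and with recovering the Neumann condition at $\theta=\omega$ is welcome but does not change the argument.
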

\begin{proof}
For $\la=i\tau$ with $\tau \in \mathbb{R}$, due to the Lax-Milgram lemma and Poincar\'e inequality, for all $g \in L^2(0,\omega)$ there exists a unique $\varphi \in H_{0,\omega}^1(0,\om)$ such that $(\tau^2-d^2_\theta)\varphi=g$ and $d_\theta \varphi(\omega)=0$.
Then $d^2_\theta\varphi=\tau^2 \varphi-g \in L^2(0,\omega)$. Hence $\mathscr{J}(\la)$ is invertible and continuous. From the Banach theorem, $\mathscr{J}(\la)$ is an isomorphism.
More precisely, the Lax-Milgram lemma implies that
\[\|d_\theta\varphi\|^2_{L^2(0,\omega)}+|\la|^2\|\varphi\|^2_{L^2(0,\omega)}=(g,\varphi)_{L^2(0,\omega)},\]
in particular
\[|\la|^2\|\varphi\|_{L^2(0,\omega)} \leq \|g\|_{L^2(0,\omega)}.\]
Since in addition $d^2_\theta\varphi=\tau^2 \varphi-g$, we have
\[ \|d^2_\theta\varphi\|_{L^2(0,\omega)} \leq |\la|^2 \|\varphi\|_{L^2(0,\omega)} + \|g\|_{L^2(0,\omega)} \leq 2\,\|g\|_{L^2(0,\omega)},\]
which completes the proof.
\end{proof}
\noindent We will say that $\la \in \mathbb{C}$ is an eigenvalue of $\mathscr{J}$ if ${\rm Ker}\,\mathscr{J}(\la) \neq \{0\}$.
We have the following lemma.
\begin{lemma}
\label{eigenvalue}
For all $\la \in \mathbb{C}$, $\mathscr{J}(\la) : \mathcal{D}(\mathscr{J}) \longrightarrow L^2(0,\omega)$ is an isomorphism
if and only if $\la$ is not one of the $\la_n=(\pi/2+n \pi)/\omega$, $n \in \mathbb{Z}$.
\end{lemma}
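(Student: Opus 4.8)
The plan is to reduce the statement to an explicit ODE eigenvalue computation, after first establishing that $\mathscr{J}(\la)$ is always a Fredholm operator of index zero, so that invertibility is equivalent to injectivity. To this end, I would note that $\mathcal{D}(\mathscr{J})$ is a closed subspace of $H^2(0,\omega)$ and that the embedding $\mathcal{D}(\mathscr{J}) \hookrightarrow L^2(0,\omega)$ is compact. Since $\mathscr{J}(\la)\varphi = \mathscr{J}(i)\varphi - (\la^2+1)\varphi$, and since $\mathscr{J}(i) : \mathcal{D}(\mathscr{J}) \to L^2(0,\omega)$ is an isomorphism by Lemma \ref{itau} (case $\tau = 1$), the operator $\mathscr{J}(\la)$ is a compact perturbation of an isomorphism, hence Fredholm of index zero for every $\la \in \mathbb{C}$. (Equivalently, one may invoke the analytic Fredholm theorem for the analytic family $\la \mapsto \mathscr{J}(\la)$, which is invertible at $\la = i$.) It follows that $\mathscr{J}(\la)$ is an isomorphism if and only if $\ker \mathscr{J}(\la) = \{0\}$, i.e. if and only if $\la$ is not an eigenvalue of $\mathscr{J}$.

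It then remains to solve $-\varphi'' - \la^2\varphi = 0$ on $(0,\omega)$ under the constraints $\varphi(0) = 0$ and $d_\theta\varphi(\omega) = 0$. For $\la = 0$, the general solution $\varphi(\theta) = A\theta + B$ is forced by $\varphi(0) = 0$ and $d_\theta\varphi(\omega) = 0$ to vanish identically, so $\la = 0$ is not an eigenvalue. For $\la \neq 0$, writing $\varphi(\theta) = A\sin(\la\theta) + B\cos(\la\theta)$, the condition $\varphi(0) = 0$ gives $B = 0$, and then $d_\theta\varphi(\omega) = A\,\la\cos(\la\omega)$ vanishes for some $A \neq 0$ exactly when $\cos(\la\omega) = 0$, that is $\la\omega \in \{\pi/2 + n\pi : n \in \mathbb{Z}\}$. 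Hence the eigenvalues of $\mathscr{J}$ are precisely the numbers $\la_n = (\pi/2 + n\pi)/\omega$, $n \in \mathbb{Z}$, which combined with the previous paragraph yields the claim.

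I do not expect a genuine obstacle here: the Fredholm step is routine once Lemma \ref{itau} is available together with the compactness of the embedding $\mathcal{D}(\mathscr{J}) \hookrightarrow L^2(0,\omega)$, and the kernel computation is elementary. The only point deserving a little care is to use \emph{both} the Dirichlet condition at $0$ and the Neumann condition at $\omega$ when excluding non-eigenvalues (in particular $\la = 0$), so that the eigenvalue set matches exactly the list $\{(\pi/2+n\pi)/\omega\}$ and not, say, the pure Dirichlet list $\{n\pi/\omega\}$.
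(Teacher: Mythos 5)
Your proposal is correct and follows essentially the same route as the paper: invertibility for purely imaginary $\la$ (Lemma \ref{itau}) combined with the analytic Fredholm theorem reduces the question to computing $\ker\mathscr{J}(\la)$, and the explicit ODE computation identifies the eigenvalues as $\la_n=(\pi/2+n\pi)/\omega$. You merely spell out the Fredholm step and the kernel computation (including the $\la=0$ case) in more detail than the paper, which simply calls them straightforward.
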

\begin{proof}
Lemma \ref{itau} indicates that the result is true for any $\la \in i\mathbb{R}$. It follows from the analytic Fredholm theorem that
$\mathscr{J}(\la) : \mathcal{D}(\mathscr{J}) \longrightarrow L^2(0,\omega)$ is an isomorphism
if and only if $\la$ is not an eigenvalue of $\mathscr{J}$.
It is  
straightforward that 
the eigenvalues of $\mathscr{J}$ are $\la_n=(\pi/2+n \pi)/\omega$, $n \in \mathbb{Z}$,
the corresponding eigenfunctions being given by
$\varphi_n(\theta)=\sin((\pi/2+n\pi)\theta/\omega)$. The result follows.
\end{proof}
\noindent We now consider a situation where $\la$ is no longer purely imaginary.
\begin{lemma}\label{papillon}
There exists a real positive constant $\delta$ such that for all $\la \in \mathbb{C}$ satisfying 
\[|{\rm Re}\,\la| < \delta\, |{\rm Im}\,\la|,\]
the operator $\mathscr{J}$ is an isomorphism and if $\varphi \in \mathcal{D}(\mathscr{J})$
satisfies $\mathscr{J}(\la)\varphi=g \in L^2(0,\omega)$, then 
\[ \|d^2_\theta\varphi\|_{L^2(0,\omega)} +|\la|^2 \|\varphi\|_{L^2(0,\omega)} \leq 4\,\|g\|_{L^2(0,\omega)}.\]
\end{lemma}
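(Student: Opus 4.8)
The plan is to treat a $\lambda$ lying in the "butterfly" sector $|\mathrm{Re}\,\lambda|<\delta\,|\mathrm{Im}\,\lambda|$ as a small perturbation of the purely imaginary case already settled in Lemma \ref{itau}, the crucial structural fact being that on this sector the number $\lambda^2$ has strictly negative real part. First I would observe that any $\lambda$ with $|\mathrm{Re}\,\lambda|<\delta\,|\mathrm{Im}\,\lambda|$ necessarily has $\mathrm{Im}\,\lambda\neq 0$, hence is non-real; since by Lemma \ref{eigenvalue} the eigenvalues of $\mathscr{J}$ are the real numbers $(\pi/2+n\pi)/\omega$, such a $\lambda$ is never an eigenvalue, and Lemma \ref{eigenvalue} then gives immediately that $\mathscr{J}(\lambda)\colon\mathcal{D}(\mathscr{J})\to L^2(0,\omega)$ is an isomorphism. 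It remains only to prove the uniform a priori bound.

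Write $\lambda=\mu+i\tau$ with $\mu,\tau\in\mathbb{R}$ and $|\mu|<\delta|\tau|$ (so $\tau\neq0$). I would multiply the identity $-(\lambda^2+d^2_\theta)\varphi=g$ by $\overline{\varphi}$, integrate over $(0,\omega)$ and integrate by parts once; the boundary contributions vanish because $\varphi(0)=0$ and $d_\theta\varphi(\omega)=0$, leaving
\[
\|d_\theta\varphi\|^2_{L^2(0,\omega)}-\lambda^2\|\varphi\|^2_{L^2(0,\omega)}=(g,\varphi)_{L^2(0,\omega)}.
\]
Taking real parts and using $\mathrm{Re}(\lambda^2)=\mu^2-\tau^2$ gives $\|d_\theta\varphi\|^2+(\tau^2-\mu^2)\|\varphi\|^2=\mathrm{Re}(g,\varphi)\le\|g\|\,\|\varphi\|$. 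As $\delta<1$ we have $\tau^2-\mu^2\ge(1-\delta^2)\tau^2>0$, so discarding $\|d_\theta\varphi\|^2$ and dividing by $\|\varphi\|$ (the case $\varphi=0$ being trivial) yields $\|\varphi\|\le\|g\|/((1-\delta^2)\tau^2)$; combined with $|\lambda|^2=\mu^2+\tau^2\le(1+\delta^2)\tau^2$ this gives
\[
|\lambda|^2\|\varphi\|_{L^2(0,\omega)}\le\frac{1+\delta^2}{1-\delta^2}\,\|g\|_{L^2(0,\omega)}.
\]
Finally, from $d^2_\theta\varphi=-\lambda^2\varphi-g$ I obtain $\|d^2_\theta\varphi\|\le|\lambda|^2\|\varphi\|+\|g\|\le\bigl(1+\tfrac{1+\delta^2}{1-\delta^2}\bigr)\|g\|$, whence
\[
\|d^2_\theta\varphi\|_{L^2(0,\omega)}+|\lambda|^2\|\varphi\|_{L^2(0,\omega)}\le\Bigl(1+2\,\frac{1+\delta^2}{1-\delta^2}\Bigr)\|g\|_{L^2(0,\omega)}.
\]

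It then suffices to fix $\delta$ small enough that $\frac{1+\delta^2}{1-\delta^2}\le\frac32$, for instance $\delta=1/\sqrt5$ (or any smaller value), to make the constant at most $4$, which is the claim. There is no genuine obstacle here: the argument is a routine energy estimate, and the only points requiring a little care are the sign of $\mathrm{Re}(\lambda^2)$ on the sector — which is exactly what forces $\delta<1$ and turns $(\tau^2-\mu^2)\|\varphi\|^2$ into a usable nonnegative term — and the bookkeeping of the numerical constant, which one simply chases through the two displayed inequalities. It is also worth noting that this estimate is consistent with, and strictly generalizes, Lemma \ref{itau}: setting $\mu=0$ recovers the purely imaginary case with constant $\le 3\le 4$.
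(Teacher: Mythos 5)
Your proof is correct, but it takes a genuinely different route from the paper's. The paper proves the bound by perturbation off the imaginary axis: it writes $\la=\pm i|\la|e^{i\psi}$, sets $\tilde{\la}=\pm i|\la|$, applies the constant-$3$ estimate of Lemma \ref{itau} to $\tilde{g}=\mathscr{J}(\tilde{\la})\varphi$, and then absorbs the perturbation term $\|\tilde{g}-g\|\le|\tilde{\la}^2-\la^2|\,\|\varphi\|\le\eps|\la|^2\|\varphi\|$ for $\delta$ small enough (choosing $3\eps=1/4$ to land on the constant $4$); the resulting $\delta$ is not explicit. You instead redo the energy estimate directly on the whole sector, exploiting the single structural fact that $\mathrm{Re}(\la^2)=\mu^2-\tau^2\le-(1-\delta^2)\tau^2<0$ there, which makes the real part of the variational identity coercive in $\|\varphi\|^2$ and yields $|\la|^2\|\varphi\|\le\frac{1+\delta^2}{1-\delta^2}\|g\|$ with the explicit threshold $\delta=1/\sqrt{5}$. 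All the individual steps check out: the boundary terms in the integration by parts do vanish ($\varphi(0)=0$, $d_\theta\varphi(\omega)=0$), the isomorphism claim follows correctly from Lemma \ref{eigenvalue} because the sector contains no real $\la$ and the eigenvalues $(\pi/2+n\pi)/\omega$ are all real, and the constant bookkeeping $1+2\cdot\tfrac{3}{2}=4$ is right. What each approach buys: yours is shorter, self-contained (it does not even need Lemma \ref{itau}, which it recovers as the case $\mu=0$), and gives a quantitative $\delta$; the paper's perturbation argument is less computational and would transfer more readily to situations where the sign of $\mathrm{Re}(\la^2)$ alone does not give coercivity for the boundary conditions at hand, but for this particular symbol your direct argument is arguably the cleaner one.
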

\begin{proof}
We already know from Lemma \ref{itau} that the result holds for $\la \in i \mathbb{R}$.
Now let us consider the case when $\la \notin i \mathbb{R}$.
We write $\la$ as $\la=\pm i |\la| e^{i\psi}$ for $\psi \in (-\pi/2,\pi/2)$.
Set $\tilde{\la}=\pm i |\la|$. Since $|\la|=|\tilde{\la}|$, 
we have 
\[\|d^2_\theta\varphi\|_{L^2(0,\omega)}+|\la|^2\|\varphi\|_{L^2(0,\omega)}=\|d^2_\theta\varphi\|_{L^2(0,\omega)}+|\tilde{\la}|^2\|\varphi\|_{L^2(0,\omega)}.\]
Let us define $\tilde{g}=\mathscr{J}(\tilde{\la})\varphi$.
According to Lemma \ref{itau}, we have
\[ \|d^2_\theta\varphi\|_{L^2(0,\omega)}+|\la|^2\|\varphi\|_{L^2(0,\omega)} \leq 3\,\|\tilde{g}\|_{L^2(0,\omega)}.\]
We have that
\[\|\tilde{g}\|_{L^2(0,\omega)} \leq \|g\|_{L^2(0,\omega)} + \|\tilde{g}-g\|_{L^2(0,\omega)}\]
and
\[\|\tilde{g}-g\|_{L^2(0,\omega)} =\|\mathscr{J}(\tilde{\la})\varphi-\mathscr{J}(\la)\varphi\|_{L^2(0,\omega)} \leq |\tilde{\la}^2-\la^2| \|\varphi\|_{L^2(0,\omega)}.\]
We obtain that
\[\|\tilde{g}-g\|_{L^2(0,\omega)}  \leq |e^{2i \psi}-1|^2 |\la|^2 \|\varphi\|_{L^2(0,\omega)}.\]
For all $\eps>0$, there exist $\delta$ small enough such that $\|\tilde{g}-g\|_{L^2(0,\omega)}  \leq \eps|\la|^2\|\varphi\|_{L^2(0,\omega)}$. By choosing $3\eps=1/4$ we eventually obtain the result. 
\end{proof}
\begin{proof}[Proof of Proposition \ref{Estimate_appendix}]
Lemma \ref{papillon} implies that for all $\la \in \mathbb{C}$ such that ${\rm Re}\,\la=\beta$ and $|{\rm Im}(\la)| \geq \nu_\beta$, we have the estimate
\[\|d^2_\theta\varphi\|_{L^2(0,\omega)} +|\la|^2 \|\varphi\|_{L^2(0,\omega)} \leq C\,\|g\|_{L^2(0,\omega)},
\]
where $C>0$ is independent of $\la$, $g$ and $\nu_\beta$ depends only on $\beta$.
For $\la \in [\beta-i\nu_\beta,\beta+i \nu_\beta]$, the symbol $\mathscr{J}(\la)$ is invertible according to 
Lemma \ref{eigenvalue}. The analytic Fredholm theorem guarantees that the inverse operator $\la \mapsto \mathscr{J}(\la)^{-1}$ is continuous outside of its poles. Since the segment $[-\beta-i\nu_\beta,-\beta+i \nu_\beta]$ is compact, we deduce that the above estimate remains true for all $\la$ such that ${\rm Re}\,\la=\beta$ with a constant $C$ which depends neither on $g$ nor ${\rm Im}\,\la$.
\end{proof}

\section*{Appendix B: Proofs of Lemmas \ref{lemmaEstimate} and \ref{lemmaEstimate2}}
In order to prove Lemmas \ref{lemmaEstimate} and \ref{lemmaEstimate2},
we will need the following
formulas, which hold for any $\la \in \mathbb{C}$ and $\theta \in \mathbb{R}$,
\[
\cos(\la \theta)=\cos({\rm Re}(\la)\theta)\cosh ({\rm Im}(\la)\theta)-i \sin({\rm Re}(\la)\theta)\sinh({\rm Im}(\la)\theta)
\]
and
\[
\sin(\la \theta)=\sin({\rm Re}(\la)\theta)\cosh ({\rm Im}(\la)\theta)+i \cos({\rm Re}(\la)\theta)\sinh({\rm Im}(\la)\theta).
\]
They imply
\begin{equation}\label{RelationSin}
|\sin(\lambda\theta)|^2=(\cosh(2{\rm Im}(\lambda)\theta)-\cos(2{\rm Re}(\lambda)\theta))/2
\end{equation}
and 
\begin{equation}\label{RelationCos}
|\cos(\lambda\theta)|^2=(\cosh(2{\rm Im}(\lambda)\theta)+\cos(2{\rm Re}(\lambda)\theta))/2.
\end{equation}
In the following lemmas, we give the proof of two technical results needed in the previous analysis.
\begin{lemma}\label{lemmaEstimate}
Assume that $\beta\notin\{(\pi/2+n\pi)/\om,\,n\in\mathbb{Z}\}$. There is a constant $C>0$ independent of $\eps>0$, $\lambda=\beta+i\tau\in\ell_{\beta}$ such that
\begin{equation}\label{EstimationLaborieuse}
\cfrac{e^{2|\tau|\omega}}{|1+\eps\,\cos^2(\lambda\omega)|^2} \le C/\eps.
\end{equation}
\end{lemma}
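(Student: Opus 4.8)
The plan is to reduce everything to the single quantitative consequence of the hypothesis on $\beta$, namely $c_0:=|\cos(\beta\omega)|>0$, and to control the only dangerous feature of the quotient — the possible smallness of $1+\eps\cos^{2}(\lambda\omega)$ — by a simple factorisation. Writing $\lambda=\beta+i\tau$ and using the formula for $\cos(\lambda\omega)$ recalled at the beginning of this appendix, I set $\cos(\lambda\omega)=a+ib$ with $a=\cos(\beta\omega)\cosh(\tau\omega)$ and $b=-\sin(\beta\omega)\sinh(\tau\omega)$, so that $a^{2}\ge c_0^{2}\cosh^{2}(\tau\omega)\ge \tfrac{c_0^{2}}{4}\,e^{2|\tau|\omega}$. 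The key remark is that
\[
1+\eps\cos^{2}(\lambda\omega)=\eps\,(\cos(\lambda\omega)-i\eps^{-1/2})(\cos(\lambda\omega)+i\eps^{-1/2}),
\]
and since both factors $\cos(\lambda\omega)\pm i\eps^{-1/2}$ have real part equal to $a$, each has modulus at least $|a|$; hence $|1+\eps\cos^{2}(\lambda\omega)|\ge \eps\,a^{2}\ge \tfrac{\eps\,c_0^{2}}{4}\,e^{2|\tau|\omega}$.

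I would then distinguish two regimes. If $\eps\,e^{2|\tau|\omega}\ge 1$, squaring the last bound yields $|1+\eps\cos^{2}(\lambda\omega)|^{2}\ge \tfrac{c_0^{4}}{16}\,\eps^{2}e^{4|\tau|\omega}$, whence $e^{2|\tau|\omega}/|1+\eps\cos^{2}(\lambda\omega)|^{2}\le \tfrac{16}{c_0^{4}}\,\eps^{-1}(\eps e^{2|\tau|\omega})^{-1}\le \tfrac{16}{c_0^{4}}\,\eps^{-1}$, which is (\ref{EstimationLaborieuse}) in this regime.

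If instead $\eps\,e^{2|\tau|\omega}<1$, I would bound the denominator below by an absolute constant. Setting $\sigma:=\sinh^{2}(|\tau|\omega)$, the inequality $1+2\sigma=\cosh(2|\tau|\omega)\le e^{2|\tau|\omega}<1/\eps$ gives $\eps\sigma<1/2$. Using ${\rm Re}\,\cos^{2}(\lambda\omega)=a^{2}-b^{2}=\cos^{2}(\beta\omega)+\cos(2\beta\omega)\,\sigma$, I claim $1+\eps(a^{2}-b^{2})\ge 1/2$: this is clear when $\cos(2\beta\omega)\ge 0$; when $\cos(2\beta\omega)<0$, the assumption $\beta\omega\ne \pi/2+n\pi$ forces $\cos(2\beta\omega)\in(-1,0)$, so $|\cos(2\beta\omega)|<1$, and the failure of the claim would give $\eps|\cos(2\beta\omega)|\sigma>1/2+\eps\cos^{2}(\beta\omega)>1/2$, i.e. $\eps\sigma>\tfrac{1}{2|\cos(2\beta\omega)|}>1/2$, a contradiction. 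Since $|1+\eps\cos^{2}(\lambda\omega)|^{2}\ge(1+\eps(a^{2}-b^{2}))^{2}\ge 1/4$, the quotient is then at most $4\,e^{2|\tau|\omega}<4/\eps$. Combining the two regimes proves (\ref{EstimationLaborieuse}) with $C=16/\cos^{4}(\beta\omega)$.

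I do not anticipate a genuine obstacle: once the factorisation is spotted the computation is short. The only delicate point is bookkeeping — verifying at each step that the constants depend neither on $\eps$ nor on $\tau={\rm Im}\,\lambda$ — which is precisely why the argument is organised so that $c_0>0$ is the sole place the hypothesis on $\beta$ enters, and why the split $\eps e^{2|\tau|\omega}\ge 1$ versus $\eps e^{2|\tau|\omega}<1$ is made: it turns the large-$\tau$ factorisation estimate into the announced $\eps^{-1}$ rate while leaving the complementary regime entirely elementary.
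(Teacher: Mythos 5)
Your proof is correct; every step checks out. The factorisation $1+\eps\cos^2(\lambda\omega)=\eps\,(\cos(\lambda\omega)-i\eps^{-1/2})(\cos(\lambda\omega)+i\eps^{-1/2})$ is valid, the bound $|\cos(\lambda\omega)\pm i\eps^{-1/2}|\ge|a|$ with $a=\cos(\beta\omega)\cosh(\tau\omega)$ is just $|z|\ge|\mathrm{Re}\,z|$, the identity $a^2-b^2=\cos^2(\beta\omega)+\cos(2\beta\omega)\sinh^2(|\tau|\omega)$ is right, and the contradiction argument in the regime $\eps\,e^{2|\tau|\omega}<1$ correctly uses $\eps\sigma<1/2$ together with $|\cos(2\beta\omega)|<1$ (which indeed follows from $\cos(\beta\omega)\ne0$ via $\cos(2\beta\omega)=2\cos^2(\beta\omega)-1>-1$).

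Your route differs from the paper's in an instructive way. The paper uses the same factorisation (in the form $1+\eps\cos^2(\lambda\omega)=(1+i\sqrt{\eps}\cos(\lambda\omega))(1-i\sqrt{\eps}\cos(\lambda\omega))$), but it proves a single uniform lower bound $|1\pm i\sqrt{\eps}\cos(\lambda\omega)|^2\ge\eta\sqrt{\eps}\cosh(\tau\omega)$ for all $\eps$ and $\tau$ at once, by expanding the modulus squared, viewing it as a quadratic polynomial in $X=\sqrt{\eps}$ and checking that its discriminant is negative when $\cos(\beta\omega)\ne0$ and $\eta$ is small; no case split is needed. You instead bound each factor only by its real part, which is sharp enough exactly when $\eps\,e^{2|\tau|\omega}\gtrsim1$, and you dispose of the complementary regime by the cruder observation that $\mathrm{Re}\bigl(1+\eps\cos^2(\lambda\omega)\bigr)\ge1/2$ there. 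The paper's argument is more compact once the discriminant computation is done and yields a single clean inequality that is reused almost verbatim for Lemma \ref{lemmaEstimate2}; yours is more elementary (no discriminant, only real parts) and produces the fully explicit constant $C=16/\cos^4(\beta\omega)$, at the price of a two-regime bookkeeping. Both approaches isolate $\cos(\beta\omega)\ne0$ as the only place the hypothesis on $\beta$ enters.
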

\begin{proof}
Observing that $e^{2|\tau|\omega}\le 4\,\cosh(\tau\omega)^2$, we see that to establish (\ref{EstimationLaborieuse}), it is sufficient to show that there is some $\eta>0$ such that
\begin{equation}\label{EstimationLaborieuse1}
\cfrac{\eta\sqrt{\eps}\cosh(\tau\om)}{|1+i\sqrt{\eps}\,\cos(\lambda\om)|^2}\,\cfrac{\eta\sqrt{\eps}\cosh(\tau\om)}{|1-i\sqrt{\eps}\,\cos(\lambda\om)|^2} \le 1.
\end{equation}
We will study the two factors on the left hand side of  (\ref{EstimationLaborieuse1}) proving that for $\eta>0$ small enough they are both smaller than one. Let us consider the first one. A direct computation  gives 
\begin{equation}\label{CalculusReIm}
|1+i\sqrt{\eps}\,\cos(\lambda\om)|^2=\eps\cos(\beta\om)^2\cosh(\tau\om)^2+(1+\sqrt{\eps}\sin(\beta\om)\sinh(\tau\om))^2.
\end{equation}
Define the polynomial function $P$ such that
\[
P(X)=X^2\cos(\beta\om)^2\cosh(\tau\om)^2+(1+X\sin(\beta\om)\sinh(\tau\om))^2-\eta X\cosh(\tau\om).
\]
We see that the first factor on the left hand side of  (\ref{EstimationLaborieuse1}) is smaller than one as soon as $P$ is positive on $\mathbb{R}$. Since $P(0)=1>0$, it is sufficient to show that its discriminant is negative. We find 
\[
\begin{array}{lcl}
\Delta_P&=&(2\sin(\beta\om)\sinh(\tau\om)+\eta\cosh(\tau\om))^2-4(\cos(\beta\om)^2\cosh(\tau\om)^2+\sin(\beta\om)^2\sin(\tau\om)^2)\\
&=&\Big((\eta^2-4\cos(\beta\om)^2)\cosh(\tau\om)+4\eta\sin(\beta\om)\sinh(\tau\om)\Big)\cosh(\tau\om). 
\end{array}
\]
Observing that $|\sinh(\tau\om)|<\cosh(\tau\om)$, we can write
\[
\begin{array}{ll}
&(\eta^2-4\cos(\beta\om)^2)\cosh(\tau\om)+4\eta\sin(\beta\om)\sinh(\tau\om)\\
\le &(\eta^2+4\eta|\sin(\beta\om)|-4\cos(\beta\om)^2)\cosh(\tau\om).
\end{array}
\]
Therefore, since $\cos(\beta\om)\neq 0$ when $\beta\notin\{(\pi/2+n\pi)/\om,\,n\in\mathbb{Z}\}$, we see that we can find $\eta>0$ small enough (but independent of $\tau$) such that $\Delta_P<0$. This shows that the first factor on the left hand side of (\ref{EstimationLaborieuse1}) is smaller than one. A completely similar approach allows one to prove that the second factor is also smaller than one. As a consequence, (\ref{EstimationLaborieuse1}) is satisfied for $\eta$ small enough and so is (\ref{EstimationLaborieuse}).
\end{proof}
\begin{lemma}\label{lemmaEstimate2}
Assume that $\beta\notin\{(\pi/2+n\pi)/\om,\,n\in\mathbb{Z}\}$. There is a constant $C>0$ independent of $\eps>0$, $\lambda=\beta+i\tau\in\ell_{\beta}$ such that
\begin{equation}\label{EstimationLaborieuse_bis}
\cfrac{\eps^2e^{4|\tau|\om}}{|1+\eps\,\cos^2(\lambda\om)|^2} \le C.
\end{equation}
\end{lemma}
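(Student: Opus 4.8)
The plan is to reuse the factorization of $1+\eps\cos^2(\lambda\om)$ employed in the proof of Lemma~\ref{lemmaEstimate}, observing that the presence of the square $\eps^2$ (rather than a single power of $\eps$) in the numerator of~(\ref{EstimationLaborieuse_bis}) makes a completely elementary lower bound for the denominator sufficient, so that no discriminant argument is needed this time.

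First I would write $1+\eps\cos^2(\lambda\om)=(1+i\sqrt{\eps}\cos(\lambda\om))(1-i\sqrt{\eps}\cos(\lambda\om))$, whence $|1+\eps\cos^2(\lambda\om)|^2=|1+i\sqrt{\eps}\cos(\lambda\om)|^2\,|1-i\sqrt{\eps}\cos(\lambda\om)|^2$. Expanding $\cos(\lambda\om)$ for $\lambda=\beta+i\tau$ by means of the identities recalled at the start of Appendix~B, exactly as in the computation leading to~(\ref{CalculusReIm}), gives
\[
|1\pm i\sqrt{\eps}\cos(\lambda\om)|^2=\eps\cos(\beta\om)^2\cosh(\tau\om)^2+\big(1\pm\sqrt{\eps}\sin(\beta\om)\sinh(\tau\om)\big)^2\geq \eps\cos(\beta\om)^2\cosh(\tau\om)^2,
\]
the last inequality being merely the nonnegativity of the squared term. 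Multiplying the two inequalities yields $|1+\eps\cos^2(\lambda\om)|^2\geq \eps^2\cos(\beta\om)^4\cosh(\tau\om)^4$.

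It then remains to combine this with the trivial bound $e^{4|\tau|\om}\leq 16\,\cosh(\tau\om)^4$ (which follows from $\cosh(\tau\om)\geq e^{|\tau|\om}/2$) to obtain
\[
\frac{\eps^2e^{4|\tau|\om}}{|1+\eps\cos^2(\lambda\om)|^2}\leq \frac{16\,\eps^2\cosh(\tau\om)^4}{\eps^2\cos(\beta\om)^4\cosh(\tau\om)^4}=\frac{16}{\cos(\beta\om)^4},
\]
so that the claim holds with $C=16/\cos(\beta\om)^4$, a quantity which is finite precisely because $\beta\notin\{(\pi/2+n\pi)/\om,\,n\in\mathbb{Z}\}$ forces $\cos(\beta\om)\neq 0$, and which manifestly depends neither on $\eps>0$ nor on $\tau\in\mathbb{R}$.

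I do not expect any serious obstacle here. The only point worth emphasizing is the contrast with Lemma~\ref{lemmaEstimate}: there one has a single power of $\eps$ in the numerator and must introduce an auxiliary small parameter $\eta$ together with a discriminant estimate in order to absorb it, whereas here the extra power of $\eps$ lets one discard everything in each factor of the denominator except the term $\eps\cos(\beta\om)^2\cosh(\tau\om)^2$, and the estimate falls out immediately.
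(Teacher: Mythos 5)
Your proof is correct and follows essentially the same route as the paper: both rest on the factorization $|1+\eps\cos^2(\lambda\om)|^2=|1+i\sqrt{\eps}\cos(\lambda\om)|^2\,|1-i\sqrt{\eps}\cos(\lambda\om)|^2$, the expansion (\ref{CalculusReImBis}), and the bound $e^{2|\tau|\om}\le 4\cosh(\tau\om)^2$. The only cosmetic difference is that you take the lower bound $\eps\cos(\beta\om)^2\cosh(\tau\om)^2$ directly by discarding the nonnegative squared term and thereby get the explicit constant $16/\cos(\beta\om)^4$, whereas the paper phrases the same observation as the positivity of $|1\pm i\sqrt{\eps}\cos(\lambda\om)|^2-\eta\eps\cosh(\tau\om)^2$ for a sufficiently small $\eta$.
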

\begin{proof}
As in the proof of Lemma \ref{lemmaEstimate}, one can check that it is sufficient to show that there is some $\eta>0$ such that
\begin{equation}\label{EstimationLaborieuse1_bis}
\cfrac{\eta\eps\cosh(\tau\om)^2}{|1+i\sqrt{\eps}\,\cos(\lambda\om)|^2}\,\cfrac{\eta\eps\cosh(\tau\om)^2}{|1-i\sqrt{\eps}\,\cos(\lambda\om)|^2} \leq 1.
\end{equation}
In (\ref{CalculusReIm}), we obtained 
\begin{equation}\label{CalculusReImBis}
|1\pm i\sqrt{\eps}\,\cos(\lambda\om)|^2=\eps\cos(\beta\om)^2\cosh(\tau\om)^2+(1\mp\sqrt{\eps}\sin(\beta\om)\sinh(\tau\om))^2.
\end{equation}
Therefore, we can write
\[
\begin{array}{ll}
& |1\pm i\sqrt{\eps}\,\cos(\lambda\om)|^2-\eta\eps\cosh(\tau\om)^2 \\
= & \eps(\cos(\beta\om)^2-\eta)\cosh(\tau\om)^2+(1\mp\sqrt{\eps}\sin(\beta\om)\sinh(\tau\om))^2 >0
\end{array}
\]
for $\eta$ small enough. This is enough to conclude.
\end{proof}
\bibliography{Biblio}
\bibliographystyle{plain}

 \end{document}